\documentclass[11pt]{article}

\usepackage{amsthm, amssymb, bm, amsmath,cancel,mathrsfs}
\usepackage{soul, color,verbatim}
\usepackage[pagebackref,colorlinks=true,pdfpagemode=none,urlcolor=blue,linkcolor=blue,citecolor=blue]{hyperref}

\usepackage[colorinlistoftodos,disable]{todonotes}

\hoffset 0.0cm
\textwidth 16.0cm
\oddsidemargin 0.0cm
\evensidemargin 0.0cm

\newtheorem{theorem}{Theorem}[]
\newtheorem{lemma}[theorem]{Lemma}
\newtheorem{proposition}[theorem]{Proposition}
\newtheorem{corollary}[theorem]{Corollary}

\newtheorem{remark}[theorem]{Remark}

\newcommand \Cm { \mathbb{C}}
\newcommand \Dm { \mathbb{D}}
\newcommand \Rm { \mathbb{R}}
\newcommand \Nm { \mathbb{N}}
\newcommand \Sm { \mathbb{S}}
\newcommand \Zm { \mathbb{Z}}

\renewcommand \L {{\cal L}}
\newcommand \A {{\cal A}}
\newcommand \B {{\cal B}}

\newcommand \G {{\cal G}}
\newcommand \N {{\cal N}}
\renewcommand \H {{\cal H}}
\newcommand \ft {{\mathfrak t}}

\newcommand \fN { {\mathcal N }}
\newcommand \dom {{\text{dom }}}
\newcommand \ran {{\text{ran }}}
\renewcommand \P {P}

\newcommand \bg {\bar{g}}

\newcommand \wtH {\widetilde{H}}

\newcommand \response[1] {{#1}}

\renewcommand \d { {\mathrm{d} }}

\newcommand \dprod[2] {\left( #1, #2\right)}

\newcommand \dir {\tau_\gamma^D}
\newcommand \neu {\tau_\gamma^N}
\newcommand \dirneu {\tau_\gamma^{D,N}}

\newcommand \ngamma {n_\gamma}

\title{Boundary triples for a family of degenerate elliptic operators of Keldysh type}
\author{Fran\c{c}ois Monard\thanks{Department of Mathematics, University of California, Santa Cruz CA 95064; email:fmonard@ucsc.edu} \and Yuzhou Zou\thanks{Department of Mathematics, Northwestern University, Evanston IL 60208; email:yuzhou.zou@northwestern.edu}}
\date{}

\begin{document}
\maketitle

\begin{abstract}
    We consider a one-parameter family of degenerately elliptic operators $\L_\gamma$ on the closed disk $\Dm$, of Keldysh (or Kimura) type, which appears in prior work \cite{Mishra2022} by the authors and Mishra, related to the geodesic X-ray transform. Depending on the value of a constant $\gamma\in \Rm$ in the sub-principal term, we prove that either the minimal operator is self-adjoint (case $|\gamma|\ge 1$), or that one may construct appropriate trace maps and Sobolev scales (on $\Dm$ and $\Sm^1=\partial\Dm$) on which to formulate mapping properties, Dirichlet-to-Neumann maps, and extend Green's identities (case $|\gamma|<1$). The latter can be reinterpreted in terms of a boundary triple for the maximal operator, or a generalized boundary triple for a distinguished restriction of it. The latter concepts, object of interest in their own right, provide avenues to describe sufficient conditions for self-adjointness of extensions of $\L_{\gamma,min}$ that are parameterized in terms of boundary relations, and we formulate some corollaries to that effect.  
\end{abstract}

\tableofcontents

\section{Introduction} \label{sec:intro}

This article is concerned with the study of boundary triples (or equivalently, the derivation of appropriate settings where generalized Green's identities\footnote{For the Laplacian on a bounded domain $\Omega \subset \Rm^2$ with smooth boundary and $f,g\in C^\infty(\overline{\Omega})$, Green's first identity reads $\int_{\Omega} ((-\Delta f) g + \nabla f\cdot \nabla g) = -\int_{\partial\Omega} \partial_\nu f g$, while Green's second identity is the skew-symmetrized version $\int_{\Omega} (-g\Delta f + f\Delta g) = \int_{\partial\Omega} (f \partial_\nu g - g\partial_\nu f)$.} hold) of a one-parameter family of degenerate elliptic operators on the Euclidean unit disk $\Dm = \{z\in \Cm: |z|\le 1\}$: for $\gamma\in \Rm$, using polar coordinates $z = \rho e^{i\omega}$ and using $x = 1-\rho^2$ as the boundary defining function and with $\d V = \rho\ \d\rho\ \d\omega$ the Euclidean measure, we define 
\begin{align}
    \L_\gamma &:= -\rho^{-1} x^{-\gamma} \partial_\rho (\rho x^{\gamma+1} \partial_\rho) - \rho^{-2} \partial_\omega^2 + (1+\gamma)^2, \label{eq:Lgamma} \\
    &= - x \partial_\rho^2 - (\rho^{-1}-(3+2\gamma)\rho)\partial_\rho - \rho^{-2}\partial_\omega^2 + (\gamma+1)^2 id. \label{eq:Lgamma2}
\end{align}
Expression \eqref{eq:Lgamma} shows that $\L_\gamma$ is formally self-adjoint for the space $L^2_\gamma := L^2(\Dm, x^\gamma \d V)$ (whose inner product we denote $\dprod{f}{g}_{L^2_\gamma} := \int_{\Dm} f \bar{g}\ x^\gamma \d V$), while expression \eqref{eq:Lgamma2} shows the degenerate behavior of $\L_\gamma$, of first order in the top-degree term normal to the boundary. Although the coefficient $\gamma$ only appears in subprincipal terms, the operator-theoretic properties of $\L_\gamma$ (e.g. number of self-adjoint extensions, associated traces and their regularity at the boundary) strongly depend on $\gamma$. 

There are many appearances of operators of this type in the literature:

\smallskip
$\bullet$ Such operators arise in the study of fluid flows, as operators that switch from being elliptic to hyperbolic across a curve, two prototypes of which are Keldysh (our case) and Tricomi operators, see e.g. the book \cite{Otway2012}. Indeed, expression \eqref{eq:Lgamma} of $\L_\gamma$ naturally extends past the unit circle, and the operator becomes hyperbolic outside the unit disk. Here we are interested in confining ourselves to the ``elliptic region" $\Dm$ and studying the operator $\L_\gamma$ there. 

\smallskip
$\bullet$ In recent advances on microlocal methods for asymptotically hyperbolic manifolds and spacetimes, Vasy initiated in \cite{Vasy2013,Vasy2012} a series of works which leverage some properties of Keldysh-type operators. On asymptotically hyperbolic manifolds, after compactification and change of smooth structure, Laplace-Beltrami type of operators can be factored using model Keldysh operators, and the meromorphic continuation of the resolvent of the former uses crucially that the latter enjoy {\em radial point estimates}, see also \cite{Zworski2016}. Keldysh operators also arise as the restriction of the Minkowski Laplacian on the boundary of radially-compactified Minkowski space-time. There, the elliptic regions correspond to polar caps limit points of timelike trajectories. See also \cite{Lebeau2019,Galkowski2021}. 

\smallskip
$\bullet$ Going the opposite route, one may fit Keldysh-type operators into the framework of uniformly degenerate (or 0-) operators of Mazzeo \cite{Mazzeo1991}, a framework originally designed to study (Hodge-)Laplacians on asymptotically hyperbolic manifolds, providing a flexible context where such operators can be made Fredholm. Such operators also arise as (the spatial part of) Heston diffusions in mathematical finance \cite{Feehan2015}, or Kimura diffusions in population genetics \cite{Epstein2013,Epstein2014}. In these cases, the study is further complicated by the fact that the model necessarily involves spatial domains with corners. Of study there is the diffusion associated with these operators, and regularity properties of solutions. 

\smallskip
$\bullet$ The $H^1$-type spaces constructed below are example of Sobolev spaces associated with degenerate quadratic forms in the sense of \cite{Cavalheiro2008,Sawyer2010}, though they benefit from a slightly more specific degenerate behavior of the quadratic form (only at the boundary, uniform behavior in terms of the boundary point).

\smallskip
The authors' motivation for this work arises in the connection of the $\L_\gamma$ family with inverse problems on Riemannian manifolds with (geodesically) convex boundary, notably, the description of appropriate Hilbert scales which capture sharp mapping properties of the geodesic X-ray transform, see e.g. \cite{Monard2017,Monard2019a, Mishra2022, Mazzeo2021, Monard2021}. Recently, the authors with Mishra proved in \cite{Mishra2022} that a one-parameter family of weighted geodesic X-ray transforms on the Euclidean disk had its normal operators be functions of (a distinguished self-adjoint realization of) $\L_\gamma$ and $i\partial_\omega$. As a step toward exploring the connections between degenerate elliptic operators and X-ray transform on more general Riemannian surfaces, this article endeavors to study the family $\L_\gamma$ in its own right, including the existence (or non-existence) of trace operators and their regularity, Dirichlet-to-Neumann map, and appropriate Hilbert scales where mapping properties are sharply described. 

The starting point of the work is the observation that for any $f,g\in C^\infty(\Dm^{int})$ and $R\in (0,1)$, 
\begin{align}
    \int_{\Dm_R} \left(\bg \L_\gamma f - x \partial_\rho f\ \partial_\rho \bg - \frac{1}{\rho}\partial_\omega f \frac{1}{\rho} \partial_\omega \bg - (\gamma+1)^2 f\bg\right)\ &x^\gamma \ \d V = \int_{\partial \Dm_R} \rho x^{\gamma+1} \bg \partial_\rho f \ \d\omega, \label{eq:preG1}\\
    \int_{\Dm_R} (\bg \L_\gamma f - f\L_\gamma \bg)\ x^\gamma\ \d V &= \int_{\partial \Dm_R} \rho x^{\gamma+1}(\bg \partial_\rho f - f \partial_\rho \bg)\ \d\omega, \label{eq:preG2}
\end{align}
and the classical question becomes to understand in what sense these identities can be understood as Green's first and second identities: for what spaces for $f,g$ can we send $R\to 1$ in the identities above, and make sense of the right-hand sides as boundary traces? Once this can be obtained on domains of definition of $\L_\gamma$ where it is a closed operator, the extension of identity \eqref{eq:preG2} gives a measure of how close $\L_\gamma$ is to being self-adjoint, and self-adjoint realizations of $\L_\gamma$ can be understood in terms of restrictions of $\L_\gamma$ to subspaces with specified boundary constraints. Classically, one defines the {\em minimal} operator $\L_{\gamma,min}$ to be the closure of $\L_\gamma$ equipped with domain $\dot{C}^\infty(\Dm)$ (smooth functions vanishing at infinite order at the boundary). From \eqref{eq:preG2}, the operator $\L_{\gamma,min}$ is easily seen to be symmetric, and a classical question is to understand and characterize all self-adjoint realizations of $\L_{\gamma}$ between $\L_{\gamma,min}$ and $\L_{\gamma,max} := \L_{\gamma,min}^*$. 

\medskip
We now briefly describe the main results of the article presented in the next section. 

In Section \ref{sec:mainprelim}, we first fix notation and state preliminary properties of the $\L_\gamma$ family, while recalling some distinguished self-adjoint realizations given in prior literature \cite{Wuensche2005,Mishra2022}. 

In Section \ref{sec:gammage1}, we first characterize the Friedrichs extension of $\L_\gamma$ in terms of previously known extensions, and deduce in Theorem \ref{th:minsa} that for $|\gamma|\ge 1$, the minimal operator $\L_{\gamma,min}$ is in fact self-adjoint. In particular, there is only one self-adjoint extension of $\L_{\gamma,min}$, and \eqref{eq:preG1}-\eqref{eq:preG2} can only have trivial right-hand side if extended to $R\to 1$ with $f,g$ in a domain where $\L_\gamma$ is closed. 

Section \ref{sec:gammale1} then covers the case $|\gamma|<1$, where the situation is markedly different: one may define domains where $\L_\gamma$ is closed, and where the right-hand sides of \eqref{eq:preG1}-\eqref{eq:preG2} can be extended into Dirichlet and Neumann trace operators whose precise mapping properties and \response{tangential} regularity are given in the main theorems, Theorems \ref{thm:first} and \ref{thm:second}. In this case, one can also naturally define a Dirichlet-to-Neumann map, see Theorem \ref{thm:DNmap}. The main theorems provide ways of making sense of Green's identities \eqref{eq:preG1}-\eqref{eq:preG2} when $f,g$ belong to the maximal domain (the domain of $\L_{\gamma,min}^*$), see Theorem \ref{thm:second}, or a subspace of it called $W_\gamma^2$ (see \eqref{eq:W2gamma}) in Theorem \ref{thm:first}. Unlike in Theorem \ref{th:minsa}, the operator $\L_\gamma$ now has infinitely \response{many} self-adjoint realizations, whose domains of definition are obtained by prescribing certain boundary conditions. 

To make this last point more precise, in Section \ref{sec:bt}, we reformulate our main results in the language of {\em boundary triples} and {\em generalized boundary triples}. The latter objects allow, via a general functional-analytic framework, to describe self-adjoint extensions of a given operator in terms of self-adjoint boundary relations, see e.g. \cite{Behrndt2007,Behrndt2012,Behrndt2020}, in terms of classical notions such as $\gamma$-fields (called ``Poisson maps" here to avoid conflicts with the constant $\gamma$) and Weyl $M$-functions. 

We end this discussion by briefly describing the methodology. The family $\L_\gamma$ is rotation-invariant and as such gives rise to countably many one-dimensional operators $\{\L_{\gamma,n}\}_{n\in \Zm}$ on $[0,1]_\rho$ defined by the relation $\L_{\gamma} (e^{in\omega} f(\rho)) = e^{in\omega} \L_{\gamma,n} f (\rho)$. Such operators can in principle be studied using Sturm-Liouville theory. In the latter language, the endpoint $\rho=0$ is always singular, while the properties of the endpoint $\rho=1$ depend on $\gamma$ but not on $n$: the cases $\gamma\in (-1,0)$, $\gamma\in [0,1)$ and $|\gamma|\ge 1$ respectively correspond to $\rho=1$ being a ``regular point'', a ``singular point in the limit circle case'', and a ``singular point in the limit point case''. The case of $\gamma=0$ also involves a double indicial root, which requires refined analysis. We use this {\em a priori} knowledge to construct $H^1$-type function spaces (directly on $\Dm$ rather than on each separate angular Fourier mode) which are adapted to each $\L_\gamma$, some of which require a 2D version of ``quasi-derivative'', and/or log-type \response{tangential} Sobolev regularity in the case of double indicial roots. We construct a number of trace operators (as well as their right-inverses when they exist), whose boundedness is obtained by combining angular Fourier analysis with continuous families of 1D trace estimates (see e.g. Lemmas \ref{lem:1Dtrace} and \ref{lem:H1log}), or at other times make use of specific knowledge about generalized Zernike polynomials found in \cite{Wuensche2005}, see e.g. Proposition \ref{prop:tauND}. Once such trace estimates are established and their right inverses are constructed, many results follows by density, duality and functional-analytic arguments. One of the advantages of the current analysis is that it is direct and self-contained, not requiring change of smooth structure or factorization of the operator $\L_\gamma$. These results provide new families of function spaces where boundary pairings, Green's identities for $\L_\gamma$ and Fredholm settings should be naturally understood. It should also be expected that these functional settings should become robust to similar operators that may no longer be rotation-invariant.

\section{Main results} \label{sec:main}

\subsection{Preliminaries} \label{sec:mainprelim}

\paragraph{Indicial roots and conormal spaces $\A_\gamma$.}

Let $\dot{C}^\infty(\Dm)$ be the space of smooth functions on the closed unit disk $\Dm$, all of whose derivatives vanish on $\Sm^1 = \partial\Dm$, with topological dual denoted $C^{-\infty}(\Dm)$. The latter is the space of extendible distributions and will be the largest space considered in what follows. 

Let us first define natural 'smooth' spaces of definition for $\L_\gamma$. \response{Rewriting $\L_\gamma$ in the form\footnote{Note that this expression differs from that appearing in \cite[Proof of Theorem 6]{Mishra2022}, where the term $(2+2\gamma) \rho\partial_\rho$ was erroneously given as $(3+2\gamma) \rho\partial_\rho$, although this is inconsequential for the purposes of \cite[Theorem 6]{Mishra2022}.} 
\begin{align*}
    \L_\gamma = -x \Delta + (2+2\gamma) \rho\partial_\rho - \partial_\omega^2 + (\gamma+1)^2 id,
\end{align*}
where $\rho\partial_\rho$ and $\partial_\omega$ are smooth vector fields on $\Dm$ we see that $\L_\gamma$ has smooth coefficients in $\Dm$. Hence} we naturally have 
\begin{align}
    \L_\gamma \colon \dot{C}^\infty(\Dm) \to \dot{C}^\infty(\Dm).
    \label{eq:LgamCdot}
\end{align}
We now enlarge this definition to some distinguished conormal spaces. For all $\gamma \in \Rm$, $\alpha\in \Rm$ and $n\in \Zm$, a direct calculation gives
\begin{align}
    \L_\gamma (\rho^\alpha e^{in\omega}) = (\alpha+\gamma+1)^2 \rho^\alpha e^{in\omega} + (n^2-\alpha^2) \rho^{\alpha-2} e^{in\omega}. 
    \label{eq:id2}
\end{align}
Similarly,
\begin{align}
    \L_\gamma (x^\alpha) = (2\alpha+\gamma+1)^2 x^\alpha - 4\alpha (\gamma+\alpha) x^{\alpha-1}.
    \label{eq:id3}
\end{align}
In particular, the indicial roots (independent of the boundary point), are $0$ and $-\gamma$, since unless $\alpha \in \{0,-\gamma\}$, $\L_\gamma(x^\alpha)$ is more singular as $x\to 0$ than $x^\alpha$. 

In what follows, the following intertwining property, which can be checked directly, will allow us to translate what is known of $\L_\gamma$ to obtain properties on $\L_{-\gamma}$: 
\begin{align}
    \L_\gamma \circ x^{-\gamma} = x^{-\gamma} \circ \L_{-\gamma}\quad \text{ on } \quad C^\infty(\Dm^{int}), \qquad \gamma\in \Rm.
    \label{eq:interxgamma}
\end{align}
In the case $\gamma = 0$, we also have the important property
\begin{align}
    \L_0 (\log x\ f) = \log x\ \L_0 f + 4 (\rho\partial_\rho f+ f), \qquad f\in C^\infty(\Dm^{int}).
    \label{eq:id4}
\end{align}
Identity \eqref{eq:id4} can be either checked directly, or derived exploiting \eqref{eq:interxgamma} upon sending $\gamma\to 0$ in the following identity 
\begin{align*}
    \L_\gamma \circ \frac{x^{-\gamma}-1}{-\gamma} = \frac{x^{-\gamma}-1}{-\gamma} \circ \L_{-\gamma} + \frac{\L_\gamma-\L_{-\gamma}}{\gamma},
\end{align*}
using that $\frac{x^{-\gamma}-1}{-\gamma} \to \log x$ as $\gamma\to 0$.

The above discussion on indicial roots together with \eqref{eq:id4} motivates the definition of
\begin{align}
    \A_\gamma := \left\{
    \begin{array}{cc}
	x^{-\gamma} C^\infty(\Dm) + C^\infty(\Dm), & \gamma\ne 0, \\
	\log x\ C^\infty(\Dm) + C^\infty(\Dm), & \gamma = 0,
    \end{array}
    \right.
    \label{eq:Agamma}
\end{align}
\response{sub}spaces of \response{$C^\infty(\Dm^{int})$} that encode boundary behavior, each stable under $\L_\gamma$, and some of whose subspaces form core domains of self-adjointness for $\L_\gamma$. For example, for $\gamma>-1$, \cite[Theorem 6]{Mishra2022} states that $(\L_\gamma, C^\infty(\Dm))$ is essentially self-adjoint (further, its full eigendecomposition is known in terms of generalized Zernike polynomials). This result, together with the intertwining property \eqref{eq:interxgamma}, also implies immediately the existence of other self-adjoint extensions, which we state without proof: 
\begin{lemma}\label{lem:othersaext}
    For any $\gamma<1$, the operator $(\L_\gamma,x^{-\gamma}C^\infty(\Dm))$ acting on $L^2_\gamma$ is essentially self-adjoint.    
\end{lemma}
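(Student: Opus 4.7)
The proof should be a short transport-of-structure argument, moving the essential self-adjointness of $(\L_{-\gamma}, C^\infty(\Dm))$ on $L^2_{-\gamma}$ (guaranteed by \cite[Theorem 6]{Mishra2022} since $\gamma<1$ means $-\gamma>-1$) across the intertwining identity \eqref{eq:interxgamma} to obtain the claim.

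The plan is to introduce the multiplication operator $U\colon L^2_{-\gamma}\to L^2_\gamma$, $Uf := x^{-\gamma}f$, and check first that $U$ is a \emph{unitary} isomorphism. The computation is a one-liner:
\begin{align*}
    \|Uf\|_{L^2_\gamma}^2 = \int_{\Dm} |x^{-\gamma}f|^2\, x^\gamma\, \d V = \int_{\Dm} |f|^2\, x^{-\gamma}\, \d V = \|f\|_{L^2_{-\gamma}}^2,
\end{align*}
and $U$ is evidently surjective onto a dense subspace by taking $f = x^\gamma g$ for $g$ ranging in a dense subset of $L^2_\gamma$. Under $U$, the dense subspace $C^\infty(\Dm) \subset L^2_{-\gamma}$ is mapped onto $x^{-\gamma} C^\infty(\Dm) \subset L^2_\gamma$, noting that the latter indeed sits inside $L^2_\gamma$ precisely when $\gamma<1$.

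Next, the intertwining identity \eqref{eq:interxgamma} can be rewritten as $\L_\gamma \circ U = U \circ \L_{-\gamma}$ on $C^\infty(\Dm^{int})$, so in particular on $C^\infty(\Dm)$. Hence the operators $(\L_\gamma, x^{-\gamma}C^\infty(\Dm))$ acting on $L^2_\gamma$ and $(\L_{-\gamma}, C^\infty(\Dm))$ acting on $L^2_{-\gamma}$ are unitarily equivalent via $U$. Since the latter is essentially self-adjoint by \cite[Theorem 6]{Mishra2022}, and essential self-adjointness is preserved under unitary conjugation (closures and adjoints both intertwine with $U$), the former is essentially self-adjoint as well.

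There is no real obstacle here; the only point that requires a moment's care is ensuring that $U$ is well-defined and unitary, which is exactly where the hypothesis $\gamma<1$ enters (guaranteeing both the integrability of $x^{-\gamma}$ against $x^\gamma\, \d V$ on smooth data and the applicability of \cite[Theorem 6]{Mishra2022} with parameter $-\gamma$).
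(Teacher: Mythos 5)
Your proof is correct and is exactly the argument the paper has in mind: the lemma is stated there without proof, with the remark that it follows ``immediately'' from \cite[Theorem 6]{Mishra2022} together with the intertwining property \eqref{eq:interxgamma}, which is precisely the unitary-conjugation argument you spell out. The details you supply (unitarity of $f\mapsto x^{-\gamma}f$ from $L^2_{-\gamma}$ to $L^2_\gamma$, the role of $\gamma<1$ in both the integrability and the applicability of the cited theorem) are the right ones.
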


As we will see below, Dirichlet and Neumann traces, when they exist, correspond to mechanisms aiming at extracting the most and second most singular terms in the expansion of functions in $\A_\gamma$ off of $\partial\Dm$. Here, ``singular'' ordering is among polyhomogeneous terms $\{x^\zeta\log^k x\}_{\zeta\in \Cm,\ k\in \Nm_0}$, where we have 
\begin{align*}
    x^\zeta \log^k x = o (x^{\zeta'} \log^{k'} x) \quad \text{ iff } \quad \left\{
    \begin{array}{l}
	Re(\zeta)> Re(\zeta') \quad \text{or} \\
	Re(\zeta) = Re(\zeta') \quad \text{and}\quad k<k'.
    \end{array}
    \right.
\end{align*}
In this sense, Dirichlet and Neumann traces will be thought of as the coefficients in front of the terms given below
\begin{center}
    \begin{tabular}[htpb]{c||c|c|c}
	$\gamma$ & $(-\infty,0)$ & $0$ & $(0,\infty)$ \\
	\hline
	\hline
	Dirichlet term & $1$ & $\log x$ & $x^{-\gamma}$ \\
	\hline
	Neumann term & $x^{-\gamma}$ & $1$ & $1$
    \end{tabular}    
\end{center}

\paragraph{Distributional facts.} Let ${}^t\L_\gamma \colon C^{-\infty}(\Dm)\to C^{-\infty}(\Dm)$ the transpose operator of \eqref{eq:LgamCdot}. There is a natural injection $\iota_\gamma\colon L^2_\gamma\cup \A_\gamma \to C^{-\infty}(\Dm)$ given by 
\begin{align}
    \langle \iota_\gamma f, \psi\rangle := \dprod{f}{\psi}_{L^2_\gamma}, \qquad f\in L^2_\gamma\cup \A_\gamma, \qquad \psi\in \dot{C}^\infty(\Dm), 
    \label{eq:iotagam}
\end{align}
Note that $\A_\gamma\subset L_\gamma^2$ if and only if $|\gamma|<1$. We will say that a distribution $u\in C^{-\infty}(\Dm)$ 'belongs to $L^2_\gamma$ (resp. $\A_\gamma$)' if there is $f\in L^2_\gamma$ (resp. $\A_\gamma$) such that $u = \iota_\gamma f$.  

As is visible through an integration by parts, we have that $\dprod{\L_\gamma f}{\psi}_{L^2_\gamma} = \dprod{f}{\L_\gamma \psi}_{L^2_\gamma}$ for all $f\in \A_\gamma$ and $\psi\in \dot{C}^\infty(\Dm)$. This implies that 
\begin{align}
    {}^t\L_\gamma (\iota_\gamma f) = \iota_\gamma (\L_\gamma f), \qquad \forall f\in \A_\gamma. 
    \label{eq:selftranspose}
\end{align}
In particular, the 'restriction' of ${}^t \L_\gamma$ to $\dot{C}^\infty(\Dm)$ or $\A_\gamma$ through the map $\iota_\gamma$ agrees with $\L_\gamma$. Thus, for $f\in L^2_\gamma$, we'll say that $\L_\gamma f\in L^2_\gamma$ if the distribution ${}^t \L_\gamma (\iota_\gamma f)$ belongs to $L^2_\gamma$.

\paragraph{Two natural operators.} One may define two natural closed operators out of $\L_\gamma$: 

\begin{itemize}
    \item[(i)] The minimal operator $\L_{\gamma, min}$, closure of the operator defined in \eqref{eq:LgamCdot} (also called the preminimal operator), i.e. whose domain is the completion of $\dot{C}^\infty(\Dm)$ for the graph norm 
	\begin{align}
	    f\mapsto \ngamma(f) := (\|f\|^2_{L^2_\gamma} + \|\L_\gamma f\|^2_{L^2_\gamma})^{1/2}.
	    \label{eq:ngamma}
	\end{align}
    \item[(ii)] The maximal operator $\L_{\gamma,max}$, the adjoint of $\L_{\gamma,min}$, with domain 
\begin{align}
    \dom (\L_{\gamma,max}) = \left\{f\in L^2_\gamma, \quad \L_{\gamma} f \in L^2_\gamma\right\}.
    \label{eq:dommax}
\end{align}

\end{itemize}

\subsection{Characterization of Dirichlet extensions. Self-adjointness of $\L_{\gamma,min}$ for $|\gamma|\ge 1$} \label{sec:gammage1}

Let $\L_{\gamma,D}$ be the Friedrichs extension for the quadratic form $\alpha_\gamma\colon C_c^\infty(\Dm^{int})\to \Rm$ defined by 
\begin{align}
    \alpha_\gamma(f) := \dprod{\L_\gamma f}{f}_{L^2_\gamma} \stackrel{(\star)}{=} \|\sqrt{x}\ \partial_{\rho}f\|_{L^2_\gamma}^2 + \|\rho^{-1}\partial_{\omega}f\|_{L^2_\gamma}^2 + (1+\gamma)^2\|f\|_{L^2_\gamma}^2, \quad f\in C_c^\infty(\Dm^{int}),
    \label{eq:alphagamma}
\end{align}
where $(\star)$ follows from an integration by parts with no boundary term. Then we have the following characterizations: 

\begin{lemma} \label{lem:Dcharac}
    The operator $\L_{\gamma,D}$ coincides with the closure of the following essentially self-adjoint operators: 
    \begin{align*}
	(\L_{\gamma}, x^{-\gamma} C^\infty(\Dm))\quad \text{if} \quad \gamma<0 \qquad \text{ and } \qquad (\L_{\gamma}, C^\infty(\Dm)) \quad \text{if}\quad \gamma\ge 0.
    \end{align*}
\end{lemma}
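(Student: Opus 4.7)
The argument rests on the standard characterization of the Friedrichs extension: among self-adjoint extensions of a densely defined semibounded symmetric operator $T_0$, the Friedrichs extension is the unique one whose domain lies inside the closed form domain. Let $H_1$ denote the completion of $C_c^\infty(\Dm^{int})$ under the form-graph norm $(\alpha_\gamma(\cdot) + \|\cdot\|^2_{L^2_\gamma})^{1/2}$. Since Lemma~\ref{lem:othersaext} and \cite[Theorem 6]{Mishra2022} already guarantee that the closures of the two operators listed in the statement are self-adjoint extensions of the preminimal operator, it suffices to show that the graph-norm closure of each listed core sits inside $H_1$; uniqueness of the Friedrichs extension then forces equality with $\L_{\gamma,D}$.

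\textbf{Case $\gamma \ge 0$.} I would first reduce to verifying the membership $C^\infty(\Dm) \subset H_1$. Integration of identity \eqref{eq:preG1} over $\Dm_R$ with $f = g$ followed by the limit $R \to 1$, which is legitimate since the boundary term decays like $(1-R^2)^{\gamma+1} \to 0$ for $\gamma > -1$, yields $\alpha_\gamma(f) = \dprod{\L_\gamma f}{f}_{L^2_\gamma}$ for every $f \in C^\infty(\Dm)$, and hence $\|f\|^2_{H_1} \lesssim \|f\|^2_{L^2_\gamma} + \|\L_\gamma f\|^2_{L^2_\gamma}$ on $C^\infty(\Dm)$. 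By completeness of $H_1$, this bound together with the containment $C^\infty(\Dm)\subset H_1$ forces the graph-norm closure of $C^\infty(\Dm)$ into $H_1$.

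To establish $C^\infty(\Dm) \subset H_1$, fix $f \in C^\infty(\Dm)$. The quantities $\sqrt{x}\partial_\rho f$ and $\rho^{-1}\partial_\omega f$ are bounded on $\Dm$ (the latter because $\partial_\omega f$ vanishes at the origin for smooth $f$), so $\alpha_\gamma(f) < \infty$. I would then approximate $f$ by $\phi_\epsilon := \chi_\epsilon f \in C_c^\infty(\Dm^{int})$ and show $\phi_\epsilon \to f$ in form norm. For $\gamma > 0$, a polynomial boundary cutoff $\chi_\epsilon(x) = \chi(x/\epsilon)$ suffices: the key error estimate gives $\|\sqrt{x}(\partial_\rho\chi_\epsilon)f\|^2_{L^2_\gamma} \lesssim \epsilon^\gamma \to 0$. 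For the critical value $\gamma = 0$, this same polynomial cutoff only yields an $O(1)$ bound, and I would instead use a logarithmic cutoff $\chi_\epsilon(x) = \psi(\log x/\log \epsilon)$ (supported in $\epsilon\le x\le \sqrt\epsilon$), which produces the improved bound $\|\sqrt{x}(\partial_\rho\chi_\epsilon)f\|^2_{L^2_0}\lesssim |\log\epsilon|^{-1} \to 0$. The remaining terms $\|(\chi_\epsilon-1)f\|_{L^2_\gamma}$, $\|\sqrt{x}(\chi_\epsilon-1)\partial_\rho f\|_{L^2_\gamma}$, and $\|\rho^{-1}(\chi_\epsilon-1)\partial_\omega f\|_{L^2_\gamma}$ vanish by dominated convergence.

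\textbf{Case $\gamma < 0$.} I would reduce to the already-established case via the intertwining \eqref{eq:interxgamma}. The multiplication $M_\gamma\colon v \mapsto x^{-\gamma} v$ is a unitary isomorphism $L^2_{-\gamma} \to L^2_\gamma$, satisfies $\L_\gamma M_\gamma = M_\gamma \L_{-\gamma}$, preserves $C_c^\infty(\Dm^{int})$ as a set, maps $C^\infty(\Dm)$ bijectively onto $x^{-\gamma}C^\infty(\Dm)$, and conjugates the quadratic forms: $\alpha_\gamma(M_\gamma v) = \alpha_{-\gamma}(v)$. Since $-\gamma > 0$, the first case identifies $\L_{-\gamma, D}$ with the closure of $(\L_{-\gamma}, C^\infty(\Dm))$; unitary transport by $M_\gamma$ then identifies $\L_{\gamma, D}$ with the closure of $(\L_\gamma, x^{-\gamma}C^\infty(\Dm))$. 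I expect the genuinely delicate step in the entire proof to be the cutoff argument at the critical value $\gamma = 0$, where the natural polynomial boundary cutoff fails to produce a vanishing error and a logarithmic cutoff becomes indispensable.
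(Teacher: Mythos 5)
Your proposal is correct, and it shares the paper's overall frame (the uniqueness characterization of the Friedrichs extension via containment of the operator domain in the form domain, plus the intertwining $M_\gamma$ to reduce $\gamma<0$ to $-\gamma>0$), but the key density step is genuinely different. The paper first uses graph-norm cutoffs only to absorb $xC^\infty(\Dm)$ into the closure of $C_c^\infty(\Dm^{int})$, and then handles the passage from $xC^\infty(\Dm)$ to $C^\infty(\Dm)$ by expanding in generalized Zernike polynomials: each $G_{n,k}^\gamma$ is corrected by an infinite family of higher-degree Zernike polynomials with the same boundary trace, and an elementary lemma on divergent series (Lemma \ref{real-analysis-lemma}) shows the correction can be made small in the form norm precisely when $\sum_{k'}(k')^{2\gamma-1}$ diverges, i.e.\ $\gamma\ge 0$. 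You instead approximate $f\in C^\infty(\Dm)$ directly in the form norm by boundary cutoffs, with the crucial observation that the commutator term $\|\sqrt{x}(\partial_\rho\chi_\epsilon)f\|^2_{L^2_\gamma}$ is $O(\epsilon^\gamma)$ for a polynomial cutoff (sufficient for $\gamma>0$) and $O(1/|\log\epsilon|)$ for a logarithmic cutoff at $\gamma=0$; this is exactly the $x^{1+\gamma}$-capacity of the boundary, which vanishes iff $\gamma\ge 0$, so your argument is sharp in the same range and your identification of $\gamma=0$ as the delicate case is accurate. Your route is more elementary and self-contained (no explicit eigenbasis or norms of the $G_{n,k}^\gamma$ are needed, and it would survive non-rotation-invariant perturbations of the operator), while the paper's route buys reusability: the identical Zernike template with exponent $4$ in place of $2$ is what later proves the graph-norm density underlying Theorem \ref{th:minsa} for $\gamma\ge 1$, where a naive cutoff in the graph norm would not suffice. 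The only points worth making explicit in a final write-up are the closability of $\alpha_\gamma$ (so that the completion $H_1$ injects into $L^2_\gamma$ and form-norm limits can be identified with $L^2_\gamma$ limits), and the inequality $\alpha_\gamma(g)\le\tfrac12(\|g\|^2_{L^2_\gamma}+\|\L_\gamma g\|^2_{L^2_\gamma})$ on $C^\infty(\Dm)$ that converts graph-norm Cauchy sequences into form-norm Cauchy sequences; both are routine and you have implicitly used them correctly.
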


The spectral decomposition of the above operators is well-known: for $\gamma\ge 0$, $\L_{\gamma,D}$ has full eigendecomposition 
\response{
\begin{align}
    \left\{ G_{n,k}^\gamma, (n+1+\gamma)^2\right\}_{n\ge 0,\ 0\le k \le n}, \quad G_{n,k}^\gamma := P_{n-k,k}^{\gamma},
    \label{eq:Gnkgamma}
\end{align}
}
where $P_{m,\ell}^\gamma$ denotes the generalized Zernike polynomials in the convention of \cite{Wuensche2005}; for $\gamma<0$, $\L_{\gamma,D}$ has full eigendecomposition $\{x^{-\gamma} \response{G_{n,k}^{-\gamma}}, (n+1-\gamma)^2\}_{n\ge 0,\ 0\le k\le n}$. 
In either case, we can define a functional calculus for $\L_{\gamma,D}$ and a Dirichlet Sobolev scale 
\begin{align}
    \wtH_D^{s,\gamma}(\Dm) := \dom (\L_{\gamma,D}^{s/2}), \quad s\in \Rm,
    \label{eq:DSob}
\end{align}
so that the following operator makes sense and is in fact an isometry
\begin{align}
    \L_{\gamma,D}^{-1} \colon \wtH^{s,\gamma}_D (\Dm) \to \wtH^{s+2,\gamma}_D (\Dm), \qquad s\in \Rm.
    \label{eq:Linv}
\end{align}

As a result of further density lemmas proved in Section \ref{sec:density}, we have the following: 
\begin{theorem} \label{th:minsa}
    If $|\gamma|\ge 1$, then $\L_{\gamma,min}$ is self-adjoint.     
\end{theorem}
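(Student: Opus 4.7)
The plan is to identify $\L_{\gamma,min}$ with the Friedrichs extension $\L_{\gamma,D}$. Since $\L_{\gamma,D}$ is a self-adjoint closed extension of $\L_{\gamma,min}$, we have $\L_{\gamma,min} \subseteq \L_{\gamma,D} = \L_{\gamma,D}^* \subseteq \L_{\gamma,min}^* = \L_{\gamma,max}$, so $\L_{\gamma,min}$ is self-adjoint if and only if $\L_{\gamma,min} = \L_{\gamma,D}$. In view of Lemma \ref{lem:Dcharac}, this reduces to showing that the core $C^\infty(\Dm)$ of $\L_{\gamma,D}$ (for $\gamma \geq 0$) or $x^{-\gamma} C^\infty(\Dm)$ (for $\gamma < 0$) lies in the $n_\gamma$-closure of $\dot{C}^\infty(\Dm)$.

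We first reduce to $\gamma \geq 1$ via the intertwining \eqref{eq:interxgamma}: multiplication by $x^{-\gamma}$ realizes a unitary map $L^2_{-\gamma} \to L^2_\gamma$ that conjugates $\L_{-\gamma}$ to $\L_\gamma$ and bijects $\dot{C}^\infty(\Dm)$ with itself, so $\L_{\gamma,min}$ and $\L_{-\gamma,min}$ are unitarily equivalent, and the Dirichlet cores correspond.

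For $\gamma \geq 1$ and $f \in C^\infty(\Dm)$, we choose cutoffs $\chi_\varepsilon \in \dot{C}^\infty(\Dm)$ depending only on $x$ with $\chi_\varepsilon \nearrow 1$ on $\Dm^{int}$, and set $f_\varepsilon := \chi_\varepsilon f \in \dot{C}^\infty(\Dm)$. Then $f_\varepsilon \to f$ in $L^2_\gamma$ by dominated convergence, and since $\chi_\varepsilon$ has no angular dependence one computes
\begin{align*}
    \L_\gamma f_\varepsilon - \chi_\varepsilon \L_\gamma f = -x (\partial_\rho^2 \chi_\varepsilon) f - 2x (\partial_\rho \chi_\varepsilon)(\partial_\rho f) - (\rho^{-1} - (3+2\gamma)\rho)(\partial_\rho \chi_\varepsilon) f.
\end{align*}
Since $f$ and $\partial_\rho f$ are bounded on $\Dm$ and the coefficient of $\partial_\rho \chi_\varepsilon$ in the first-order term is smooth and non-vanishing at $\rho=1$, the governing quantity is $\int_0^1 |\chi_\varepsilon'(x)|^2 x^\gamma \, dx$; the other two terms are handled by the same (or a better) quantity, using that $x|\partial_\rho \chi_\varepsilon| \to 0$ uniformly on the support of $\chi_\varepsilon'$ and $x|\partial_\rho^2 \chi_\varepsilon| \lesssim |\chi_\varepsilon'(x)| + x|\chi_\varepsilon''(x)|$.

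The main obstacle is the borderline case $\gamma = 1$. For $\gamma > 1$, a polynomial cutoff supported in $\{x \leq \varepsilon\}$ already gives $\int_0^1 |\chi_\varepsilon'|^2 x^\gamma \, dx \lesssim \varepsilon^{\gamma-1} \to 0$; at $\gamma=1$ this scheme only produces a bounded quantity. One must then use a logarithmic cutoff such as $\chi_\varepsilon(x) = \psi(|\log x|/|\log \varepsilon|)$ for a suitable $\psi \in C^\infty(\Rm)$, which yields decay of order $|\log \varepsilon|^{-1}$ and closes the argument. The precise approximation statements needed here are the density lemmas of Section \ref{sec:density}.
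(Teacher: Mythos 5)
Your proposal is correct, and it follows the paper's overall skeleton — identify $\L_{\gamma,min}$ with the Friedrichs extension $\L_{\gamma,D}$ via Lemma \ref{lem:Dcharac}, reduce $\gamma\le -1$ to $\gamma\ge 1$ through the unitary intertwining $x^{-\gamma}\colon L^2_{-\gamma}\to L^2_\gamma$, and handle $\gamma>1$ by the polynomial cutoff of Theorem \ref{thm:densities}.(ii) — but it diverges genuinely at the borderline case $\gamma=1$. There the paper abandons cutoffs altogether: it shows $xC^\infty(\Dm)\subset\overline{C_c^\infty(\Dm^{int})}^{n_\gamma}$ and then approximates each Zernike polynomial $G^\gamma_{n,k}$ by finite combinations $G^\gamma_{n,k}-\sum f^{(j)}_{n',k'}G^\gamma_{n',k'}$ with unit coefficient sum (hence vanishing boundary trace), the key input being Lemma \ref{real-analysis-lemma} together with the divergence of $\sum_{n'-2k'=m}\bigl((n'+1+\gamma)^4\|G^\gamma_{n',k'}\|^2_{L^2_\gamma}\bigr)^{-1}$, which holds precisely when $\gamma\ge 1$. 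Your logarithmic cutoff is a more elementary route that avoids the explicit eigenbasis entirely, and it does close: the dominant commutator term is $2(1-(3+2\gamma)\rho^2)\chi_\varepsilon'(x)f$ with coefficient nonvanishing at $\rho=1$, and for $\chi_\varepsilon'$ supported in $\varepsilon\le x\le\sqrt{\varepsilon}$ with $|\chi_\varepsilon'|\lesssim (x|\log\varepsilon|)^{-1}$ one gets $\int_\varepsilon^{\sqrt{\varepsilon}}|\chi_\varepsilon'|^2x\,\d x\lesssim|\log\varepsilon|^{-1}\to 0$, while the $x\partial_\rho^2\chi_\varepsilon$ and $x\partial_\rho\chi_\varepsilon\,\partial_\rho f$ terms obey the same or better bounds. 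One small correction: your closing reference to ``the density lemmas of Section \ref{sec:density}'' is misleading, since Theorem \ref{thm:densities}.(ii) only covers $\alpha>(1-\gamma)/2$ and therefore excludes exactly the case $\alpha=0$, $\gamma=1$ you are treating; the logarithmic-cutoff estimate is not contained in those lemmas and must be carried out as a separate computation (which you have in fact sketched correctly). The trade-off is that the paper's spectral argument reuses machinery already needed for Lemma \ref{lem:Dcharac} and makes the sharpness of the threshold $\gamma=1$ transparent, whereas yours is shorter, self-contained, and more robust to perturbations of the operator that destroy rotation invariance.
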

In particular, such a result precludes the existence of trace maps on the maximal domain, or a Dirichlet-to-Neumann map. 

\subsection{Traces, Green's identities and Dirichlet-to-Neumann map for $|\gamma|<1$} \label{sec:gammale1}

While Theorem \ref{th:minsa} prevents the existence of more than one self-adjoint extension for $\L_{\gamma,min}$ whenever $|\gamma|\ge 1$, we now describe a markedly different scenario for any value $\gamma\in (-1,1)$. The construction has varying degrees of simplicity depending on whether $\gamma\in (-1,0)$, $\gamma=0$ or $\gamma\in (0,1)$, though in the interest of conciseness, we will unify the presentation. For each $\gamma\in (-1,1)$, there exists a radial function $\phi_\gamma$ (see \eqref{eq:phigamma}) non-vanishing on a neighborhood $[0,x_\gamma)_x \times \Sm^1_\omega$ of $\partial \Dm$, and satisfying $\L_\gamma \phi_\gamma = (\gamma+1)^2 \phi_\gamma$ on $[0,x_\gamma)_x \times \Sm^1_\omega$, with the relevant behavior 
\begin{align}
    \left\{
    \begin{array}{cl}
	\phi_\gamma \equiv 1, & \gamma\in (-1,0), \\
	\lim_{\rho\to 1} x^\gamma \phi_\gamma = 1, \qquad \lim_{\rho\to 1} x^{\gamma+1} \partial_\rho \phi_\gamma = 2\gamma, & \gamma\in (0,1), \\
	\lim_{\rho\to 1} (\phi_0/\log x) = 1, \qquad \lim_{\rho\to 1} x \partial_\rho \phi_0 = -2, & \gamma=0.	
    \end{array}
    \right.
    \label{eq:phigammaprops}
\end{align}

We may then define ``regularized" Dirichlet and Neumann traces $\dirneu\colon \A_\gamma \to C^\infty(\Sm^1)$ as 
\begin{align}
    \begin{split}
	\tau_\gamma^D f &:= (f/\phi_\gamma)|_{x=0}, \qquad \tau_\gamma^N f := W(f,\phi_\gamma)|_{x=0}, \\
	\text{where} \qquad & W(f,g) (\rho,\omega) := \rho x^{\gamma+1} (f\partial_\rho g - g \partial_\rho f).    
    \end{split}
    \label{eq:traces}
\end{align}
In particular, a direct calculation shows that, for $f\in\A_\gamma$, taking the form $f = f^{(0)} + \log x f^{(\log)}$ if $\gamma=0$, or $f = f^{(0)} + x^{-\gamma} f^{(-\gamma)}$ if $|\gamma|\in (0,1)$, with $f^{(0)}, f^{(\log)}, f^{(-\gamma)}\in C^\infty(\Dm)$, we have 
\begin{align}
    \tau_\gamma^D f = \left\{
    \begin{array}{cl}
	f^{(0)}|_{x=0}, & \gamma\in (-1,0), \\ 
	f^{(\log)}|_{x=0}, & \gamma=0, \\ 
	f^{(-\gamma)}|_{x=0}, & \gamma\in (0,1), 
    \end{array}
    \right. \quad \text{and} \quad \tau_\gamma^N f = \left\{
    \begin{array}{cl}
	-2\gamma f^{(-\gamma)}|_{x=0}, & \gamma\in (-1,0), \\ 
	-2 f^{(0)} - 2c_0 f^{(\log)}|_{x=0}, & \gamma=0, \\ 
	2\gamma f^{(0)}|_{x=0}, & \gamma\in (0,1),
    \end{array}
    \right.
    \label{eq:tracescomp}
\end{align}
where $c_0$ is the constant appearing in the definition \eqref{eq:phigamma} of $\phi_0$. In this sense, $\tau_\gamma^D$ extracts the most singular term and $\tau_\gamma^N$ extracts the second most singular term (or a linear combination of them for $\gamma=0$). Moreover, the following intertwining property follows naturally: for $\gamma\in (0,1)$ and $f\in \A_\gamma$, then $x^{\gamma}f \in \A_{-\gamma}$ and 
\begin{align}
    \tau_{\gamma}^D f = \tau_{-\gamma}^D (x^\gamma f) \quad \text{and} \quad \tau_{\gamma}^N f = \tau_{-\gamma}^N (x^\gamma f).
    \label{eq:intertrace}
\end{align}

To discuss Green's identities, we first need to define an ``$H^1$" inner product where the Dirichlet trace extends boundedly. A first guess would be to extend the form $\alpha_\gamma$ defined in \eqref{eq:alphagamma} to $\A_\gamma$, but this only makes sense for $\gamma\in (-1,0)$. \response{Indeed, for $\gamma\in (0,1)$ for instance, the last right-hand side of \eqref{eq:alphagamma} can become infinite when applied to an element of $x^{-\gamma} C^\infty(\Dm)$, and the equality ($\star$) there no longer holds; see also the discussion in Section \ref{sec:tgammab}, titled ``Case $0\le \gamma<1$''.} 

To remedy this, we let $\rho_\gamma = \sqrt{1-x_\gamma}$, and for $a, b\in [0,1]$ with $a<b$, we let $\Dm_a$ be the centered disk of radius $a$, with boundary $C_a$, and $A_{a,b}$ be the annulus $\{a<\rho<b\}$. We then define, for $f,g\in \A_\gamma$, and $b\in (\rho_\gamma, 1)$, 
\begin{align}
    \begin{split}
	\ft_{\gamma,b}[f,g] &:= \dprod{\sqrt{x}\ \phi_\gamma \partial_\rho(f/\phi_\gamma)}{\sqrt{x}\ \phi_\gamma \partial_\rho (g/\phi_\gamma)}_{x^\gamma, A_{b,1}} - b (x(b))^{\gamma+1} \frac{\partial_\rho \phi_\gamma}{\phi_\gamma}(b) \int_{C_b} f \bg   \\	
	&\qquad + \dprod{\sqrt{x}\ \partial_\rho f}{\sqrt{x}\ \partial_\rho g}_{x^\gamma, \Dm_{b}} + \dprod{\rho^{-1} \partial_\omega f}{\rho^{-1} \partial_\omega g}_{L^2_\gamma} + (\gamma+1)^2 \dprod{f}{g}_{L^2_\gamma},
    \end{split}    
    \label{eq:H1tilde}
\end{align}
where, here and below, $\int_{C_b} h$ is shorthand for $\int_{0}^{2\pi} h(b,\omega)\ \d\omega$. It can be checked for any $f\in\A_\gamma$ that $\sqrt{x}\ \partial_\rho f$ is in $L^2_\gamma$ away from the boundary, and that $\sqrt{x}\ \phi_\gamma\partial_\rho(f/\phi_\gamma)$ is in $L^2_\gamma$ near the boundary, so that \eqref{eq:H1tilde} is well-defined. 

\begin{lemma} \label{lem:tgammab}
    (1) For any $f,g\in \A_\gamma$, the definition of $\ft_{\gamma,b}$ does not depend on $b\in (b_\gamma, 1)$. We thus denote $\dprod{\cdot}{\cdot}_{\wtH^{1,\gamma}}$ the value of $\ft_{\gamma,b}$ for any $b$. 

    (2) With $\alpha_\gamma$ the form defined in \eqref{eq:alphagamma}, we have for $\gamma\in [0,1)$
	\begin{align}
	    \dprod{f}{f}_{\wtH^{1,\gamma}} = \alpha_\gamma(f), \qquad f\in \A_{\gamma,D}, \quad \text{where}\quad \A_{\gamma,D}:= \ker\tau_\gamma^D, 
	    \label{eq:AgammaD}
	\end{align}
	while for $\gamma\in (-1,0)$, the above equality holds trivially true on all of $\A_\gamma$. 

    (3) For any $f,g \in \A_\gamma$, the first Green's identity holds
    \begin{align}
	\dprod{\L_{\gamma} f}{g}_{L^2_\gamma} = \dprod{f}{g}_{\wtH^{1,\gamma}} + \dprod{\neu f}{\dir g}_{L^2(\Sm^1)}.
	\label{eq:G1}
    \end{align}    

    (4) For any $\gamma\in (-1,1)$, the form $\dprod{\cdot}{\cdot}_{\wtH^{1,\gamma}}$ defined in \eqref{eq:H1tilde} is positive definite on $\A_\gamma$. 
    \end{lemma}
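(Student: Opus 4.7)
Everything will rest on a single fact: since $\phi_\gamma$ is radial and solves $\L_\gamma\phi_\gamma=(\gamma+1)^2\phi_\gamma$ on $[0,x_\gamma)_x\times\Sm^1$, formula \eqref{eq:Lgamma} forces $\partial_\rho(\rho x^{\gamma+1}\phi_\gamma')=0$ there, so $A:=\rho x^{\gamma+1}\phi_\gamma'$ is constant in the annular region. Two consequences will recur: (a) $\partial_\rho[\rho x^{\gamma+1}\phi_\gamma'/\phi_\gamma]=-\rho x^{\gamma+1}(\phi_\gamma'/\phi_\gamma)^2$, by the quotient rule applied to $A/\phi_\gamma$; and (b) the algebraic identity $\partial_\rho(\rho x^{\gamma+1}\phi_\gamma^2\partial_\rho v)=\phi_\gamma\partial_\rho(\rho x^{\gamma+1}\partial_\rho f)$ for $f=\phi_\gamma v$, verified by direct expansion using the same vanishing. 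With these, part (1) follows by differentiating $\ft_{\gamma,b}[f,g]$ in $b$: the fundamental-theorem-of-calculus boundary contributions on $C_b$ coming from $I_1(b)$ and $I_3(b)$, after expanding $\phi_\gamma^2\partial_\rho v\partial_\rho\bar w=\partial_\rho f\partial_\rho\bar g-(\phi_\gamma'/\phi_\gamma)\partial_\rho(f\bar g)+(\phi_\gamma'/\phi_\gamma)^2 f\bar g$, cancel against the product-rule expansion of $\partial_b I_2(b)$ --- the linear $\partial_\rho(f\bar g)$ piece matching directly, and the quadratic $(\phi_\gamma'/\phi_\gamma)^2 f\bar g$ piece being annihilated by $c'(b)$ via (a).

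\textbf{Green's identity (part 3).} Using (1), I will fix any convenient $b\in(\rho_\gamma,1)$ and integrate $I_1(b)$ by parts in $\rho$. By (b) followed by \eqref{eq:Lgamma}, the interior contribution becomes $\int_{A_{b,1}}\bar g\,\L_\gamma f\cdot x^\gamma\,\d V$ together with angular and zeroth-order pieces that merge with the $A_{b,1}$-portions of $I_4,I_5$. The boundary contribution at $\rho=1$ becomes $-\dprod{\tau_\gamma^N f}{\tau_\gamma^D g}_{L^2(\Sm^1)}$ via the Wronskian identity $W(f,\phi_\gamma)=-\rho x^{\gamma+1}\phi_\gamma^2\partial_\rho v$ and the trace definitions \eqref{eq:traces}. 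The boundary contribution at $\rho=b$ cancels completely once combined with $I_2(b)$ and with the $C_b$-boundary term arising when $I_3(b)$ is recast via \eqref{eq:preG1} on $\Dm_b$: the $c(b)\int_{C_b}f\bar g$ piece matches the $\phi_\gamma'/\phi_\gamma$-component of $W(f,\phi_\gamma)/\phi_\gamma$, and the residual $\int_{C_b}bx(b)^{\gamma+1}\bar g\partial_\rho f\,\d\omega$ pieces from the two sources cancel each other. Reassembling yields $\ft_{\gamma,b}[f,g]=\dprod{\L_\gamma f}{g}_{L^2_\gamma}-\dprod{\tau_\gamma^N f}{\tau_\gamma^D g}_{L^2(\Sm^1)}$, which is \eqref{eq:G1}.

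\textbf{Agreement with $\alpha_\gamma$ (part 2).} For $\gamma\in(-1,0)$, $\phi_\gamma\equiv 1$ collapses the quasi-derivative to $\partial_\rho f$ and annihilates $I_2(b)$; then $I_1(b)+I_3(b)=\int_\Dm x|\partial_\rho f|^2 x^\gamma\,\d V$, so $\ft_{\gamma,b}[f,f]=\alpha_\gamma(f)$ holds on all of $\A_\gamma$ by direct inspection. For $\gamma\in[0,1)$ and $f\in\A_{\gamma,D}$, I pass to the limit $b\to 1^-$ in the $b$-independent expression. The annular term $I_1(b)\to 0$ by dominated convergence, its integrand being integrable near $\partial\Dm$ (of polynomial rate $x^{\gamma-1}$ for $\gamma\in(0,1)$ and of logarithmic rate $1/(x|\log x|^2)$ for $\gamma=0$, once the dominant contributions to $\partial_\rho v=\partial_\rho(f/\phi_\gamma)$ are computed). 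The boundary term $I_2(b)\to 0$ because $c(b)=-A/\phi_\gamma(b)\to 0$ (as $\phi_\gamma(b)\to\infty$), while the vanishing of $\tau_\gamma^D f$ forces $f$ to have no singular leading term, keeping $\int_{C_b}|f|^2\,\d\omega$ uniformly bounded. The remaining pieces converge to $\alpha_\gamma(f)$.

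\textbf{Positive definiteness (part 4) and main obstacle.} For $\gamma\in(-1,0)$, positivity is immediate: $I_2\equiv 0$ and the surviving terms sum to $\int x|\partial_\rho f|^2 x^\gamma\,\d V+\|\rho^{-1}\partial_\omega f\|_{L^2_\gamma}^2+(\gamma+1)^2\|f\|_{L^2_\gamma}^2$, strictly positive for $f\neq 0$ since $\gamma>-1$. The subtle case is $\gamma\in[0,1)$, where $I_2(b)\le 0$ must be dominated by the nonnegative contributions. My plan is to decompose $f=e(h)+f_D$ with $h:=\tau_\gamma^D f$, $e(h):=\chi\phi_\gamma\widetilde h$ a lift formed from a cutoff $\chi$ supported near $\partial\Dm$ and any smooth $\rho$-independent extension $\widetilde h$ of $h$, and $f_D:=f-e(h)\in\A_{\gamma,D}$. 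Part (2) gives $\dprod{f_D}{f_D}_{\wtH^{1,\gamma}}\ge(\gamma+1)^2\|f_D\|_{L^2_\gamma}^2$; part (3) reduces the cross-term to $2\Re\dprod{\L_\gamma e(h)}{f_D}_{L^2_\gamma}$ (the boundary pairing drops since $\tau_\gamma^D f_D=0$); and an explicit lower bound $\dprod{e(h)}{e(h)}_{\wtH^{1,\gamma}}\gtrsim\|h\|_{L^2(\Sm^1)}^2$ on the lift must be produced to close the estimate. This last coercivity is the main obstacle: the weighted Poincar\'e bound $\int|v(b,\omega)-\tau_\gamma^D f|^2\,\d\omega\le (A\phi_\gamma(b))^{-1}I_1(b)$ falling out of a direct Cauchy--Schwarz is just barely insufficient to dominate the negative $I_2$ term, so the proof will likely have to exploit cancellation between $I_1$ and $I_2$ beyond the naive bound, plausibly by Fourier-decomposing in $\omega$ and reducing to a 1D Sturm--Liouville positivity statement underlying the eigendecomposition \eqref{eq:Gnkgamma} of $\L_{\gamma,D}$.
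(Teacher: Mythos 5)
Parts (1)--(3) of your proposal follow essentially the same route as the paper's proof: the $b$-independence, the recovery of $\alpha_\gamma$ on $\ker\tau_\gamma^D$ by letting $b\to1$, and Green's first identity all rest on the two integration-by-parts identities built from $\partial_\rho(\rho x^{\gamma+1}\partial_\rho\phi_\gamma)=0$ and the expansion $N_{\phi}f\,N_{\phi}\bar g=\partial_\rho f\,\partial_\rho\bar g-(\phi_\gamma'/\phi_\gamma)\partial_\rho(f\bar g)+(\phi_\gamma'/\phi_\gamma)^2 f\bar g$, exactly as you set them up; organizing (1) as a $\partial_b$ computation rather than as the difference $\ft_{\gamma,b}-\ft_{\gamma,a}$ is immaterial.

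Part (4) for $\gamma\in[0,1)$, however, has a genuine gap, and you have correctly located it yourself: the coercivity of the lift $e(h)$ together with control of the cross term is never established, and you concede the naive Cauchy--Schwarz/Poincar\'e bound is ``just barely insufficient.'' Two concrete points show the plan cannot close as stated. First, for $\gamma=0$ positive definiteness is \emph{not} unconditional: it holds only because of the normalization $c_0>\log(e^4-1)$ built into $\phi_0$ in \eqref{eq:phigamma}, so any correct argument must use that inequality quantitatively; your sketch never invokes $c_0$ and hence cannot distinguish the true statement from the false one obtained with a small $c_0$. Second, the cross term $2\,\mathrm{Re}\,\dprod{\L_\gamma e(h)}{f_D}_{L^2_\gamma}$ is of indefinite sign and of the same order as the positive terms you retain, so absorbing it requires exactly the sharp constant you are missing. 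The paper sidesteps all of this: for $\gamma\in(0,1)$ it proves the intertwining identity $\dprod{f}{g}_{\wtH^{1,\gamma}}=\dprod{x^\gamma f}{x^\gamma g}_{\wtH^{1,-\gamma}}$ (a consequence of \eqref{eq:G1}, \eqref{eq:interxgamma} and \eqref{eq:intertrace}) and reduces to the already-settled case $\gamma\in(-1,0)$, obtaining the explicit bound $(1-|\gamma|)^2\|f\|_{L^2_\gamma}^2$; for $\gamma=0$ it multiplies the crude pointwise bound $(f,f)_{\wtH^{1,0}}\ge\|f\|_{L^2}^2+\tfrac{2}{\phi_0(x(b))}\int_{C_b}|f|^2$ by $-b\,\phi_0(x(b))/2$ and integrates over $b\in(b_0,1)$, which converts the boundary term into $\|f\|_{A_{b_0,1}}^2$ and yields positivity precisely when $\tfrac14\log(1+e^{c_0})>1$. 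You should adopt one of these mechanisms (or an equivalent); the Fourier/Sturm--Liouville route you mention could in principle work, but it would still have to surface the $c_0$ condition.
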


Skew-symmetrizing \eqref{eq:G1}, the second Green's identity reads 
\begin{align}
    \dprod{\L_{\gamma} f}{g}_{L^2_\gamma} - \dprod{f}{\L_{\gamma} g}_{L^2_\gamma} = \dprod{\neu f}{\dir g}_{L^2(\Sm^1)} - \dprod{\neu g}{\dir f}_{L^2(\Sm^1)}, \qquad f,g\in \A_\gamma,
    \label{eq:G2}
\end{align}
which is a way of quantifying the lack of self-adjointness of $\L_\gamma$. The question is then to find spaces where $\L_\gamma$ is closed and extend the traces \eqref{eq:traces} to those spaces. 

Since by virtue of Lemma \ref{lem:tgammab}.(4), the form $\dprod{\cdot}{\cdot}_{\wtH^{1,\gamma}}$ is positive definite, we then let 
\begin{align}
    \begin{split}
	\wtH^{1,\gamma}(\Dm):& \text{ the completion of } (\A_\gamma, \dprod{\cdot}{\cdot}_{\wtH^{1,\gamma}}), \\
	\wtH^{1,\gamma}_0 (\Dm):& \text{ the completion of } (\dot{C}^\infty (\Dm), \dprod{\cdot}{\cdot}_{\wtH^{1,\gamma}}).	
    \end{split}
    \label{eq:H1tildedef}
\end{align}
We have the obvious inclusions $\wtH^{1,\gamma}_0 (\Dm)\subset \wtH^{1,\gamma}(\Dm)\subset L^2_\gamma \subset C^{-\infty}(\Dm)$. An important subspace of $\wtH^{1,\gamma}$ for what follows is 
\begin{align}
    W^2_\gamma := \{f \in \wtH^{1,\gamma}(\Dm), \quad \L_\gamma f \in L^2_\gamma \} = \dom(\L_{\gamma,max}) \cap \wtH^{1,\gamma}(\Dm),
    \label{eq:W2gamma}
\end{align}
equipped with the norm $\|f\|_{W^{2}_\gamma}^2 := \|f\|_{\wtH^{1,\gamma}}^2 + \|\L_\gamma f\|^2_{L^2_\gamma}$.

On $\Sm^1 = \partial\Dm$, we define $H_{(\gamma)}$ to be the completion of $(C^\infty(\Sm^1), \|\cdot\|_{(\gamma)})$, where for $f= \sum_{k\in \Zm} f_k e^{ik\theta}$, we define 
\begin{align}
    \|f\|_{(\gamma)}^2 := \left\{
    \begin{array}{ll}
	\sum_{k\in \Zm} \langle k\rangle^{2|\gamma|} |f_k|^2, & |\gamma|\in (0,1), \\
	\sum_{k\in \Zm} (1+\log \langle k\rangle) |f_k|^2, & \gamma=0.
    \end{array}
    \right.
    \label{eq:HgamS1}
\end{align}
For $|\gamma|\in (0,1)$, $H_{(\gamma)}$ is the classical Sobolev space $H^{|\gamma|}(\Sm^1)$ with dual identified with $H^{-|\gamma|}(\Sm^1)$. For $\gamma=0$, $H_{(0)}$ is a log-weighted Sobolev space, whose dual is identified with the completion of $C^\infty(\Sm^1)$ for the norm $f\mapsto \sum_{k\in \Zm} (1+\log \langle k\rangle)^{-1} |f_k|^2$. Below, we write $\langle \cdot, \cdot \rangle_{H_{(\gamma)}',H_{(\gamma)}}$ for the corresponding duality pairing.
Our first main theorem is the following: 
\begin{theorem}\label{thm:first}
 \response{Let $\gamma\in(-1,1)$.}  The traces $\dirneu$ defined in \eqref{eq:traces} extend as bounded operators $\dir\colon \wtH^{1,\gamma}(\Dm)\to H_{(\gamma)}$ and $\neu\colon W_\gamma^2\to H_{(\gamma)}'$, with $\dir$ onto. Green's first identity \eqref{eq:G1} extends to $f\in W_\gamma^2$ and $g\in \wtH^{1,\gamma}$
    \begin{align}
	\dprod{\L_\gamma f}{g}_{L^2_\gamma} = \dprod{f}{g}_{\wtH^{1,\gamma}} + \langle \neu f, \dir g\rangle_{H_{(\gamma)}', H_{(\gamma)}}.
	\label{eq:G1ext}
    \end{align}
    In particular, Green's second identity \eqref{eq:G2} extends to $f,g\in W_\gamma^2$
    \begin{align}
	(\L_\gamma f, g )_{L^2_\gamma} - (f, \L_\gamma g)_{L^2_\gamma} = \langle \neu f, \dir g\rangle_{H_{(\gamma)}',H_{(\gamma)}} - \langle \neu g, \dir f \rangle_{H_{(\gamma)}',H_{(\gamma)}}.
	\label{eq:G2ext}
    \end{align}  
\end{theorem}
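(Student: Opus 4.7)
The plan is to first extend $\dir$ by density, then construct $\neu$ by duality, and finally pass the Green's identities to the limit. By rotation invariance, I would angularly expand any $f\in\A_\gamma$ as $f(\rho,\omega)=\sum_{n\in\Zm} f_n(\rho)e^{in\omega}$ and observe that the inner product $\dprod{\cdot}{\cdot}_{\wtH^{1,\gamma}}$ in \eqref{eq:H1tilde} splits orthogonally across modes, so that bounding $\dir$ reduces to a one-dimensional trace estimate on each mode, which is exactly what Lemmas \ref{lem:1Dtrace} and \ref{lem:H1log} provide. The $n$-dependence of those 1D estimates is engineered so that summing the mode-wise trace bounds produces exactly the weights $\langle n\rangle^{2|\gamma|}$ (resp.\ $1+\log\langle n\rangle$ when $\gamma=0$) defining $H_{(\gamma)}$ in \eqref{eq:HgamS1}, yielding
\[
\|\dir f\|_{H_{(\gamma)}}^2 \le C\|f\|_{\wtH^{1,\gamma}}^2, \qquad f\in\A_\gamma,
\]
and density of $\A_\gamma$ in $\wtH^{1,\gamma}(\Dm)$ (by construction) extends $\dir$ to the full space. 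For surjectivity I would build a bounded right inverse $E\colon H_{(\gamma)}\to\wtH^{1,\gamma}$ mode-by-mode: given $h=\sum_n h_n e^{in\omega}$, set $(Eh)(\rho,\omega)=\sum_n h_n e^{in\omega}\phi_\gamma(\rho)\chi_n(\rho)$ with cutoffs $\chi_n$ whose length scales with $|n|^{-1}$, and verify that the induced 1D $\wtH^{1,\gamma}$-energy on each mode matches the weight $w(n)$ prescribed by $H_{(\gamma)}$.

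To build $\neu$ on $W_\gamma^2$, I would use Green's identity \eqref{eq:G1} on $\A_\gamma$ as the defining equation via duality: for $f\in W_\gamma^2$ and $h\in H_{(\gamma)}$, set
\[
\langle\neu f,h\rangle_{H_{(\gamma)}',H_{(\gamma)}} := \dprod{\L_\gamma f}{Eh}_{L^2_\gamma} - \dprod{f}{Eh}_{\wtH^{1,\gamma}}.
\]
Boundedness of $\neu\colon W_\gamma^2\to H_{(\gamma)}'$ is then immediate from Cauchy--Schwarz together with the bound on $E$. The main subtlety is well-definedness, i.e.\ independence from the choice of lift: whenever $g\in\wtH^{1,\gamma}$ satisfies $\dir g=0$, one must have $\dprod{\L_\gamma f}{g}_{L^2_\gamma}=\dprod{f}{g}_{\wtH^{1,\gamma}}$. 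I would reduce this to the density characterization $\ker\dir\cap\wtH^{1,\gamma}=\wtH^{1,\gamma}_0$, and then invoke that for $g\in\dot{C}^\infty(\Dm)\subset\dom(\L_{\gamma,min})$ and $f\in\dom(\L_{\gamma,max})=\dom(\L_{\gamma,min}^*)$, the adjoint pairing gives $\dprod{\L_\gamma f}{g}_{L^2_\gamma}=\dprod{f}{\L_\gamma g}_{L^2_\gamma}$, which in turn equals $\dprod{f}{g}_{\wtH^{1,\gamma}}$ by \eqref{eq:G1} applied to $g\in\A_\gamma$ (its trace terms vanish) together with $\wtH^{1,\gamma}$-continuity in the second argument.

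With $\dir$ and $\neu$ available on the prescribed spaces, both sides of \eqref{eq:G1ext} are jointly continuous in $(f,g)\in W_\gamma^2\times\wtH^{1,\gamma}$, and they coincide on the dense subset $\A_\gamma\times\A_\gamma$ by \eqref{eq:G1}, so they coincide throughout by continuity; swapping the roles of $f$ and $g$ (both now in $W_\gamma^2$) and subtracting gives \eqref{eq:G2ext}. I expect the technical heart of the argument to be the sharp matching of the mode-wise 1D trace estimates to the precise weights in $H_{(\gamma)}$, particularly delicate when $\gamma=0$ and the double indicial root forces the logarithmic weight $1+\log\langle n\rangle$; a secondary but still nontrivial point is the density identification $\ker\dir\cap\wtH^{1,\gamma}=\wtH^{1,\gamma}_0$ needed to make $\neu$ unambiguous on $W_\gamma^2$.
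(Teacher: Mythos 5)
Your overall route coincides with the paper's: mode-wise 1D trace estimates for $\dir$ (Lemmas \ref{lem:1Dtrace} and \ref{lem:H1log} applied with an $n$-dependent length scale), a mode-wise right inverse, a duality definition of $\neu$ through a fixed lift of the boundary data, and the identification $\ker\dir\cap\wtH^{1,\gamma}=\wtH^{1,\gamma}_0$ to make that definition unambiguous. Two steps as written, however, would not go through.

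First, the final passage to \eqref{eq:G1ext} ``by continuity from the dense subset $\A_\gamma\times\A_\gamma$'' requires $\A_\gamma$ to be dense in $W^2_\gamma$ for the norm $\|f\|^2_{\wtH^{1,\gamma}}+\|\L_\gamma f\|^2_{L^2_\gamma}$. That density is established nowhere (the paper only proves $n_\gamma$-density of $\A_\gamma$ in $\dom(\L_{\gamma,max})$, Lemma \ref{lem:agammadense}, which is a weaker topology on a larger space) and it is not needed: for fixed $f\in W^2_\gamma$ your own definition of $\neu f$ already yields \eqref{eq:G1ext} when $g=Eh$, your well-definedness argument yields it when $\dir g=0$, and the decomposition $g=E\dir g+(g-E\dir g)$ then covers every $g\in\wtH^{1,\gamma}$. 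This is exactly how the paper assembles the proof; replace the density-in-$f$ claim by this decomposition and the assembly is complete. Your verification of the kernel identity via the adjoint pairing $\dprod{\L_\gamma f}{g}=\dprod{f}{\L_\gamma g}$ for $g\in\dot{C}^\infty(\Dm)$ is a legitimate (and slightly cleaner) variant of the paper's distributional argument for \eqref{eq:H10_id}.

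Second, the right inverse. The cutoff scale must be $\langle n\rangle^{-2}$ in $x$, not $|n|^{-1}$, to produce the weight $\langle n\rangle^{2|\gamma|}$. More seriously, at $\gamma=0$ the plain ansatz $h_n e^{in\omega}\phi_0(\rho)\chi_n$ has mode-wise energy of order $\log^2\langle n\rangle$ rather than $1+\log\langle n\rangle$, because $|\log x|^2\sim\log^2 n$ throughout the support of a cutoff at scale $n^{-2}$; such an $E$ is bounded only on a strictly smaller space than $H_{(0)}$, and surjectivity of $\dir$ onto $H_{(0)}$ would not follow. The paper's right inverse \eqref{eq:R0} inserts the correction factor $1+\frac{2\log n}{\log x}=\frac{\log(n^2x)}{\log x}$, which vanishes at the edge of the support and brings the energy down to the sharp $O(\log n)$; some such correction is unavoidable in the double-indicial-root case, and your proposal as stated does not supply it.
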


Furthermore, for $\gamma\in (-1,1)$, one may define the analogue of a Dirichlet-to-Neumann map\response{, defined with respect to a spectral parameter $\lambda\in\rho(\L_{\gamma,D})$ (where, for any operator $B$, $\rho(B)$ denotes its resolvent set).}
\begin{theorem}\label{thm:DNmap}
    For any $\gamma\in (-1,1)$ and any $\lambda\in \rho(\L_{\gamma,D})$, there exists a bounded Dirichlet-to Neumann map operator 
    \begin{align}
	\Lambda_\gamma(\lambda)\colon H_{(\gamma)}\to H_{(\gamma)}',
	\label{eq:DNmap}
    \end{align}
    such that for any $u\in W_\gamma^2 \cap \ker (\L_\gamma-\lambda)$, $\Lambda_\gamma(\lambda) (\dir u) = \neu u$.  
\end{theorem}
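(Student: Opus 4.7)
The strategy is to define $\Lambda_\gamma(\lambda) := \neu \circ P_\gamma(\lambda)$, where $P_\gamma(\lambda)\colon H_{(\gamma)} \to W_\gamma^2$ is a \emph{Poisson operator} sending boundary data $f \in H_{(\gamma)}$ to a solution $u \in W_\gamma^2$ of $(\L_\gamma - \lambda) u = 0$ with $\dir u = f$. Once existence, uniqueness, and boundedness of $P_\gamma(\lambda)$ are established, the mapping property of $\Lambda_\gamma(\lambda)$ follows by composition with the bounded Neumann trace $\neu\colon W_\gamma^2 \to H_{(\gamma)}'$ from Theorem \ref{thm:first}.

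For existence, I plan to exploit the bounded surjectivity of $\dir\colon \wtH^{1,\gamma}(\Dm) \to H_{(\gamma)}$ from Theorem \ref{thm:first}: as a bounded surjection of Hilbert spaces it admits a bounded right inverse $R\colon H_{(\gamma)} \to \wtH^{1,\gamma}(\Dm)$ (e.g.\ through the orthogonal complement of $\ker \dir$). For $f\in H_{(\gamma)}$ I would seek $u$ in the form $u = Rf + w$ with $w \in \wtH^{1,\gamma}_0(\Dm)$, so that $\dir u = f$ holds automatically, and determine $w$ as the unique solution of the variational problem
\begin{equation*}
    a_\lambda(w, \psi) = -a_\lambda(Rf, \psi) \qquad \text{for all } \psi \in \wtH^{1,\gamma}_0(\Dm),
\end{equation*}
where $a_\lambda(\phi, \psi) := \dprod{\phi}{\psi}_{\wtH^{1,\gamma}} - \lambda \dprod{\phi}{\psi}_{L^2_\gamma}$ is the bounded sesquilinear form on $\wtH^{1,\gamma}(\Dm)$ naturally associated with $\L_\gamma-\lambda$ via Green's identity. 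By Lemma \ref{lem:tgammab}(2) and \ref{lem:Dcharac}, $\dprod{\cdot}{\cdot}_{\wtH^{1,\gamma}}$ restricted to $\wtH^{1,\gamma}_0$ is precisely the Friedrichs form $\alpha_\gamma$ of $\L_{\gamma,D}$; combined with the compact embedding $\wtH^{1,\gamma}_0\hookrightarrow L^2_\gamma$ inherited from the discrete eigendecomposition of $\L_{\gamma,D}$ recalled after Lemma \ref{lem:Dcharac}, the form operator $T_\lambda\colon \wtH^{1,\gamma}_0 \to (\wtH^{1,\gamma}_0)^*$ attached to $a_\lambda|_{\wtH^{1,\gamma}_0}$ is a compact perturbation of the positive-definite (hence coercive) operator $T_0$, so it is Fredholm of index zero. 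Its invertibility at $\lambda$ is equivalent to $\lambda$ not being an eigenvalue of $\L_{\gamma,D}$, which holds by hypothesis. This yields $w$ with $\|w\|_{\wtH^{1,\gamma}} \le C(\lambda)\|f\|_{H_{(\gamma)}}$; testing $a_\lambda(u,\cdot)=0$ against $\psi \in \dot{C}^\infty(\Dm)$ and invoking Green's identity \eqref{eq:G1} (whose boundary term vanishes since $\dir\psi=\neu\psi=0$) produces the distributional equation $\L_\gamma u = \lambda u$, placing $u$ in $\dom(\L_{\gamma,\max}) \cap \wtH^{1,\gamma} = W_\gamma^2$ with $\|u\|_{W_\gamma^2} \le C'(\lambda)\|f\|_{H_{(\gamma)}}$.

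For uniqueness, given two candidates $u_1, u_2 \in W_\gamma^2 \cap \ker(\L_\gamma - \lambda)$ with identical Dirichlet traces, the difference $h := u_1 - u_2$ satisfies $\dir h = 0$; pairing $h$ against any $g \in \dom(\L_{\gamma,D})$ through the extended Green's second identity \eqref{eq:G2ext}, both boundary terms vanish (as $\dir g = 0$ and $\dir h = 0$), giving
\begin{equation*}
    0 = \dprod{(\L_\gamma - \lambda)h}{g}_{L^2_\gamma} = \dprod{h}{(\L_\gamma - \bar\lambda) g}_{L^2_\gamma}.
\end{equation*}
Self-adjointness of $\L_{\gamma,D}$ yields $\bar\lambda \in \rho(\L_{\gamma,D})$, so $(\L_{\gamma,D} - \bar\lambda) g$ ranges over all of $L^2_\gamma$ as $g$ varies in $\dom(\L_{\gamma,D})$, forcing $h = 0$. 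The principal technical hurdle I anticipate is the Fredholm step, namely the correct identification of $T_0$ with the form realization of $\L_{\gamma,D}$ on $\wtH^{1,\gamma}_0$ and the compactness of the embedding $\wtH^{1,\gamma}_0\hookrightarrow L^2_\gamma$; both should reduce essentially to the explicit spectral information on $\L_{\gamma,D}$ recorded in Section \ref{sec:gammage1}, together with the form-domain description of the Friedrichs extension.
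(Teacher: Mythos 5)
Your proposal is correct and follows essentially the same route as the paper: both decompose $u = Rf + w$ with $R$ a bounded right inverse of $\dir$ from Theorem \ref{thm:qbt1} and solve for $w \in \wtH^{1,\gamma}_0 = \wtH^{1,\gamma}_D$ via the Dirichlet realization, then set $\Lambda_\gamma(\lambda) = \neu \circ P_\gamma(\lambda)$ using the boundedness of $\neu$ on $W_\gamma^2$. The paper even explicitly offers your weak-formulation/Riesz route as an alternative to directly applying $(\L_{\gamma,D}-\lambda)^{-1}$ on the Dirichlet Sobolev scale, and your duality argument for uniqueness fills in a step the paper leaves as a parenthetical remark.
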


Our second main result consists in extending Green's second identity \eqref{eq:G2} to the maximal domain $\dom (\L_{\gamma,max})$ defined in \eqref{eq:dommax}. This requires a splitting of the maximal domain in terms of a distinguished, well-understood self-adjoint extension of $\L_\gamma$, notably $\L_{\gamma,D}$ here. \response{Here and below, we define $\fN_\lambda (\L_{\gamma,max}) := \{f\in \dom (\L_{\gamma,max}),\ \L_\gamma f = \lambda f\}$.}

\begin{lemma} \label{lem:directsum}
    With $\dom (\L_{\gamma,D}) = \wtH^{2,\gamma}_D(\Dm)$, for any $\lambda\in \rho(\L_{\gamma,D})$, we have 
    \begin{align}
	\dom(\L_{\gamma,max}) = \dom (\L_{\gamma,D}) \oplus \fN_\lambda (\L_{\gamma,max})
	\label{eq:directsum}
    \end{align}    
\end{lemma}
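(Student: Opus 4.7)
This is the classical von Neumann-style direct sum decomposition of the maximal domain along a fixed self-adjoint reference extension; the proof is essentially a resolvent splitting argument once one verifies that $\L_{\gamma,D}$ sits between $\L_{\gamma,min}$ and $\L_{\gamma,max}$. First I would record the inclusion $\L_{\gamma,min}\subset\L_{\gamma,D}\subset\L_{\gamma,max}$: by Lemma \ref{lem:Dcharac}, $\L_{\gamma,D}$ is the closure of the essentially self-adjoint operator on $C^\infty(\Dm)$ (resp.\ $x^{-\gamma}C^\infty(\Dm)$) when $\gamma\ge 0$ (resp.\ $\gamma<0$), and both of those cores contain $\dot C^\infty(\Dm)$; therefore $\L_{\gamma,D}$ extends $\L_{\gamma,min}$, and taking adjoints yields $\L_{\gamma,D}=\L_{\gamma,D}^*\subset \L_{\gamma,min}^*=\L_{\gamma,max}$. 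In particular $\dom(\L_{\gamma,D})\subset\dom(\L_{\gamma,max})$, and the action of $\L_{\gamma,D}$ on its domain coincides with the distributional action of $\L_\gamma$ used to define $\dom(\L_{\gamma,max})$ in \eqref{eq:dommax}.

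Next I would prove the non-trivial inclusion ``$\subset$'' in \eqref{eq:directsum}. Given $f\in\dom(\L_{\gamma,max})$, set $F:=(\L_\gamma-\lambda)f\in L^2_\gamma$. Since $\lambda\in\rho(\L_{\gamma,D})$, the resolvent $(\L_{\gamma,D}-\lambda)^{-1}\colon L^2_\gamma\to\dom(\L_{\gamma,D})$ exists and is bounded; define
\begin{align*}
    g := (\L_{\gamma,D}-\lambda)^{-1} F \in \dom(\L_{\gamma,D}), \qquad h := f - g.
\end{align*}
By the previous paragraph, $g\in\dom(\L_{\gamma,max})$ as well and $(\L_\gamma-\lambda)g=F$, so $h\in\dom(\L_{\gamma,max})$ and $(\L_\gamma-\lambda)h=0$, i.e. $h\in\fN_\lambda(\L_{\gamma,max})$. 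This yields $f=g+h$ with the desired decomposition. The reverse inclusion ``$\supset$'' is immediate since $\fN_\lambda(\L_{\gamma,max})\subset\dom(\L_{\gamma,max})$ by definition and $\dom(\L_{\gamma,D})\subset\dom(\L_{\gamma,max})$ from Step 1.

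Finally I would verify that the sum is direct. If $u\in\dom(\L_{\gamma,D})\cap\fN_\lambda(\L_{\gamma,max})$, then $(\L_{\gamma,D}-\lambda)u=(\L_\gamma-\lambda)u=0$; since $\lambda\in\rho(\L_{\gamma,D})$ implies $\L_{\gamma,D}-\lambda$ is injective on $\dom(\L_{\gamma,D})$, we conclude $u=0$.

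\textbf{Main obstacle.} There is no serious analytic obstacle: the only point that requires care is checking that the distributional action of $\L_\gamma$ appearing in the definitions of $\dom(\L_{\gamma,max})$ and $\fN_\lambda(\L_{\gamma,max})$ genuinely agrees with the operator action of $\L_{\gamma,D}$ on its domain, so that subtractions like $(\L_\gamma-\lambda)(f-g)$ make sense in $L^2_\gamma$. This follows from \eqref{eq:selftranspose} together with the fact that the core $C^\infty(\Dm)$ or $x^{-\gamma}C^\infty(\Dm)$ of $\L_{\gamma,D}$ from Lemma \ref{lem:Dcharac} lies inside $\A_\gamma$, so the operator and distributional interpretations of $\L_\gamma$ coincide on the full domain of $\L_{\gamma,D}$ by density.
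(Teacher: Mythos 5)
Your proposal is correct and follows essentially the same route as the paper: the paper also decomposes $f$ via $f_D:=(\L_{\gamma,D}-\lambda)^{-1}(\L_\gamma-\lambda)f$ and $f_\lambda:=f-f_D$, and obtains directness from the injectivity of $\L_{\gamma,D}-\lambda$. Your additional verification that $\L_{\gamma,min}\subset\L_{\gamma,D}\subset\L_{\gamma,max}$ (so that the operator and distributional actions of $\L_\gamma$ agree on $\dom(\L_{\gamma,D})$) is left implicit in the paper but is a worthwhile point to make explicit.
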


For $f\in \dom (\L_{\gamma,max})$, upon fixing $\lambda\in \rho(\L_{\gamma,D})$, we write $f = f_D+f_\lambda$ the decomposition of $f$ according to \eqref{eq:directsum}. Our second main theorem is as follows. 

\begin{theorem}\label{thm:second}
    The traces $\dirneu$ defined in \eqref{eq:traces} extend as bounded, surjective operators 
    \begin{align*}
	\neu\colon \wtH^{2,\gamma}_D(\Dm)\to H^{1-|\gamma|}(\Sm^1), \qquad \tilde\tau_\gamma^D \colon \dom (\L_{\gamma,max}) \to H^{-1+|\gamma|}(\Sm^1).	
    \end{align*}
    Green's second identity \eqref{eq:G2} extends to $f,g\in \dom(\L_{\gamma,max})$ as follows:
    \begin{align}
	\dprod{\L_\gamma f}{g}_{L^2_\gamma} - \dprod{f}{\L_\gamma g}_{L^2_\gamma} = \langle \tilde\tau^D_\gamma g, \tau_\gamma^N f_D \rangle_{H^{-1+|\gamma|}, H^{1-|\gamma|}} - \langle \tilde\tau^D_\gamma f, \tau_\gamma^N g_D\rangle_{H^{-1+|\gamma|}, H^{1-|\gamma|}}. 
	\label{eq:G2extmax}
    \end{align}    
\end{theorem}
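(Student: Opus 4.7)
\textbf{Proof proposal for Theorem \ref{thm:second}.}

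The plan breaks into three steps: sharpen the Neumann trace on the Dirichlet domain $\wtH^{2,\gamma}_D$, use duality with a Poisson-type map to extend the Dirichlet trace to $\dom(\L_{\gamma,max})$, and derive \eqref{eq:G2extmax} via the splitting of Lemma \ref{lem:directsum}.

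First I would establish $\tau_\gamma^N\colon \wtH^{2,\gamma}_D(\Dm)\to H^{1-|\gamma|}(\Sm^1)$. Since $\wtH^{2,\gamma}_D\subset W^2_\gamma$, Theorem \ref{thm:first} already gives boundedness into $H_{(\gamma)}'$, which for $|\gamma|>0$ is $H^{-|\gamma|}(\Sm^1)$; the point is to gain a full order of tangential regularity by exploiting membership in the domain of $\L_{\gamma,D}$. Rotational invariance together with the eigendecomposition \eqref{eq:Gnkgamma} reduces the problem to computing $\tau_\gamma^N G^\gamma_{n,k}$ (and its $x^{-\gamma}$-analogue for $\gamma<0$ via \eqref{eq:intertrace}), which yields $e^{i(n-2k)\omega}$ with an explicit coefficient whose precise size is read off from the boundary behavior of the generalized Zernike polynomials catalogued in \cite{Wuensche2005}. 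Combined with the eigenvalue $(n+1+\gamma)^2$, the characterization of $\dom(\L_{\gamma,D})$ as square-summable in $(n+1+\gamma)^4|a_{n,k}|^2$, and the observation that the angular mode $n-2k$ has size $\le n$, summing produces the sharp $H^{1-|\gamma|}(\Sm^1)$ bound. Surjectivity follows because the image contains all trigonometric polynomials.

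Next I would use Lemma \ref{lem:directsum} with a real $\lambda\in\rho(\L_{\gamma,D})$ (the spectrum of $\L_{\gamma,D}$ is discrete and real, so such $\lambda$ exists) to define $\tilde\tau_\gamma^D$. Writing $f=f_D+f_\lambda$, the inclusion $\dom(\L_{\gamma,D})\subset \wtH^{1,\gamma}_0$ yields $\dir f_D=0$, so $\tilde\tau_\gamma^D$ is determined by its restriction to $\fN_\lambda(\L_{\gamma,max})$. I would start from the Poisson map $P_\gamma(\lambda)\colon H_{(\gamma)}\to W^2_\gamma\cap\fN_\lambda$ (right inverse of $\dir$ on solutions, extracted from the surjectivity in Theorem \ref{thm:first} combined with Theorem \ref{thm:DNmap}), and extend it to an isomorphism $P_\gamma(\lambda)\colon H^{-1+|\gamma|}(\Sm^1)\to\fN_\lambda(\L_{\gamma,max})$ via duality with the sharpened $\tau_\gamma^N$ from the first step: the pairing identity $((\L_\gamma-\lambda)u_D,P_\gamma(\lambda)\varphi)_{L^2_\gamma}=\langle \tau_\gamma^N u_D,\varphi\rangle$ for $u_D\in\wtH^{2,\gamma}_D,\ \varphi\in H_{(\gamma)}$, which follows from \eqref{eq:G1ext} together with $\dir u_D=0$ and $\L_\gamma P_\gamma(\lambda)\varphi=\lambda P_\gamma(\lambda)\varphi$, provides the needed bounded extension. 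Then $\tilde\tau_\gamma^D f:= P_\gamma(\lambda)^{-1}f_\lambda$ is bounded into $H^{-1+|\gamma|}(\Sm^1)$ and surjective.

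Finally, I would prove \eqref{eq:G2extmax} by direct expansion of $(\L_\gamma f,g)-(f,\L_\gamma g)$ using the splitting: self-adjointness of $\L_{\gamma,D}$ eliminates the $(f_D,g_D)$ contribution, and the reality of $\lambda$ together with $\L_\gamma f_\lambda=\lambda f_\lambda$, $\L_\gamma g_\lambda=\lambda g_\lambda$ kills the $(f_\lambda,g_\lambda)$ contribution; the remaining cross terms collapse to $((\L_{\gamma,D}-\lambda)f_D,g_\lambda)_{L^2_\gamma}-(f_\lambda,(\L_{\gamma,D}-\lambda)g_D)_{L^2_\gamma}$, each of which is identified with the corresponding boundary pairing by the very definition of $\tilde\tau_\gamma^D$ above. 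Independence of the choice of $\lambda$ follows because the LHS does not see $\lambda$ and the pairing relation uniquely determines $\tilde\tau_\gamma^D$.

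The hard part will be the first step: extracting the full $\langle n\rangle^{1-|\gamma|}$ weight (rather than the $\langle n\rangle^{-|\gamma|}$ inherited from Theorem \ref{thm:first}) requires precise asymptotics of the Zernike boundary coefficients uniform in $k$, and the endpoint $\gamma=0$ needs a separate argument tracking the logarithmic correction so that the resulting trace lands in the un-weighted $H^1(\Sm^1)$ rather than the log-weighted dual of $H_{(0)}$.
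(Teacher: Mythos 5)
Your overall architecture matches the paper's: sharpen the Neumann trace on the Dirichlet scale via the Zernike eigenbasis, extend the Dirichlet trace to $\dom(\L_{\gamma,max})$ by duality against that sharpened trace, and derive \eqref{eq:G2extmax} from the splitting of Lemma \ref{lem:directsum}. Your second step is organized differently: you build the extended Poisson map $P_\gamma(\lambda)\colon H^{-1+|\gamma|}(\Sm^1)\to\fN_\lambda(\L_{\gamma,max})$ first and set $\tilde\tau_\gamma^D f:=P_\gamma(\lambda)^{-1}f_\lambda$, whereas the paper defines $\tilde\tau_\gamma^D f$ directly as the Riesz representative of $h\mapsto -\dprod{\L_\gamma f}{Rh}_{L^2_\gamma}+\dprod{f}{\L_\gamma Rh}_{L^2_\gamma}$ (with $R$ a bounded right inverse of $\neu$ on $\wtH^{2,\gamma}_D$) and only afterwards introduces $\Upsilon'=(\neu\L_{\gamma,D}^{-1})'$ --- which is exactly your extended Poisson map --- to prove surjectivity. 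The two definitions agree, and your version has the merit of producing the $\gamma$-field of the eventual boundary triple along the way; the cost is that you must establish more about $P_\gamma(\lambda)$ up front. Your step 3 computation is correct and essentially the paper's (which takes $\lambda=0$, legitimate since $\L_{\gamma,D}\ge 1$); the $\gamma=0$ endpoint needs no separate log argument at this stage, since the Zernike boundary data are computed uniformly for $\gamma\in[0,1)$ with $c_0=-2$.

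There are, however, two genuine gaps, both concerning surjectivity. First, for $\neu\colon\wtH^{2,\gamma}_D(\Dm)\to H^{1-|\gamma|}(\Sm^1)$, the observation that ``the image contains all trigonometric polynomials'' only gives dense range, and a bounded operator with dense range need not be onto. You need a bounded right inverse; the paper constructs one explicitly by sending $e^{im\omega}$ to the average of the $|m|+1$ lowest-degree Zernike polynomials with that boundary value and estimating the resulting $\wtH^{2,\gamma}_D$ norm, which requires its own asymptotic bound on $\sum_{n-2k=m,\,n\le 3|m|}(n+1+\gamma)^{2s}\|G^\gamma_{n,k}\|_{L^2_\gamma}^2$. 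Second, your assertion that the dual extension of $P_\gamma(\lambda)$ is an isomorphism \emph{onto} $\fN_\lambda(\L_{\gamma,max})$ is both what makes $P_\gamma(\lambda)^{-1}f_\lambda$ well defined for every $f$ and what gives surjectivity of $\tilde\tau_\gamma^D$, yet it is not justified. Injectivity and closed range do follow from surjectivity of $\Upsilon:=\neu(\L_{\gamma,D}-\lambda)^{-1}$, but identifying the closed range with all of $\fN_\lambda(\L_{\gamma,max})$ requires computing $\ker\Upsilon=(\L_{\gamma,D}-\lambda)\bigl(\wtH^{2,\gamma}_D\cap\ker\neu\bigr)=\ran(\L_{\gamma,min}-\lambda)$, i.e.\ it uses the characterization $\dom(\L_{\gamma,min})=\wtH^{2,\gamma}_D\cap\ker\neu$ (the paper's \eqref{eq:Lmin}, proved as part of Proposition \ref{prop:tauND} via a further density argument), which your sketch never establishes. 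Both gaps are fixable with the Zernike machinery you already invoke, but as written neither surjectivity claim is proved.
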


\subsection{Consequences: boundary triples, self-adjoint extensions and more} \label{sec:bt}

The theory of boundary triples is a functional-analytic framework to help describe all self-adjoint realizations of an operator and their spectrum, through the use of their Poisson maps and Weyl functions, see \cite{Behrndt2020}. Refined concepts, such as {\em generalized boundary triples} and {\em quasi boundary triples} have been introduced (see, e.g., \cite{Behrndt2007,Behrndt2012,Derkach1995}), and we now explain how and why our main Theorems \ref{thm:first} and \ref{thm:second} fit this framework, along with some natural corollaries.  

\subsubsection{Preliminaries}

\paragraph{Boundary triples and their Weyl functions.} Here and below, we follow the exposition in \cite{Behrndt2012} and refer to there for more details. Let $S$ be a densely defined, closed, symmetric operator acting on a Hilbert space $(\H,\dprod{\cdot}{\cdot}_\H)$. A triple $\{ {\cal G}, \Gamma_0, \Gamma_1\}$ is said to be a {\em boundary triple} for the adjoint operator $S^*$ if $({\cal G}, \dprod{\cdot}{\cdot}_\G)$ is a Hilbert space and $\Gamma_0,\Gamma_1\colon\dom(S^*)\to {\cal G}$ are linear mappings such that the map $(\Gamma_0, \Gamma_1)\colon \dom(S^*) \to \G\times\G$ is surjective and we have the following abstract Lagrange/Green identity, true for all $f,g\in \dom(S^*)$,
\begin{align}
    \dprod{S^* f}{g}_\H - \dprod{f}{S^*g}_\H = \dprod{\Gamma_1 f}{\Gamma_0 g}_\G - \dprod{\Gamma_0 f}{\Gamma_1 g}_\G.
    \label{eq:Lagrange}
\end{align}
By virtue of \cite[Proposition 1.2]{Behrndt2012}, a boundary triple helps parameterize all closed extensions of $S$ which are restrictions of $S^*$ in terms of closed, linear relations\footnote{\response{A closed linear relation $R$ in $\G$ is a closed linear subspace of $\G\times\G$. 
}} \response{$R$} in $\G$ via the bijection 
\begin{align*}
	\response{R}\mapsto A_\response{R} := S^* | \response{ (\Gamma_0,\Gamma_1)^{-1} (R).} 
\end{align*}
\response{Given a relation $R$ on $\G$, we define the adjoint relation $R^* := \{(f,f')\in \G\times \G,\ (f',h)_\G = (f,h')_\G \text{ for all } (h,h')\in R\}$. Then the further identity} $A_{\response{R}^*} = A_{\response{R}}^*$ implies that $A_\response{R}$ is self-adjoint if and only if \response{$R$} is a self-adjoint relation in $\G$. 

\response{To say something further about the spectral properties of such extensions, we should mention that the results quoted below will sometimes constrain the relation $R$ to be of the form $\{(f,\Theta f),\ f\in \G\}$ for some linear map $\Theta\colon \G\to \G$, in which we case we will denote the extension $A_\Theta:= S^*|\ker (\Gamma_1-\Theta\Gamma_0)$, or of the form $\{(Bf, f),\ f\in \G\}$ for some linear map $B\colon \G\to \G$, in which case we will denote the extension $A_{[B]}:= S^* |\ker (B\Gamma_1-\Gamma_0)$.}

Denoting $A_0 := \response{A_{\{0\}\times \G}} = S^*|\ker\Gamma_0$, a boundary triple gives rise to a notion of {\em Poisson map}\footnote{Note that the interpretation as a Poisson map arises in PDE contexts. In abstract boundary triple theory, such a map is referred to as a $\gamma$-field, a terminology which we are avoiding here to avoid confusion with the constant $\gamma$ in $\L_\gamma$.} $P$ and a {\em Weyl function} $M$, 
\begin{align*}
    \P(\lambda) := (\Gamma_0 | \N_\lambda(S^*))^{-1}, \qquad M(\lambda) := \Gamma_1 P(\lambda), \qquad \lambda\in \rho(A_0). 
\end{align*}
By \cite[Prop. 1.5, 1.6]{Behrndt2012}, $\P$ and $M$ are holomorphic functions on $\rho(A_0)$ with values in $\B(\G,\H)$ and $\B(\G)$, respectively. 

Further, $M$ is a Herglotz function which allows to characterize the spectrum of any closed extension $A_\Theta$ of $S$ in the following way: \response{in the ``$\Theta$'' convention mentioned above,} \cite[Theorem 1.7 (ii)]{Behrndt2012} states that $\lambda\in \sigma_i(A_\Theta)$ if and only if $0\in \sigma_i (\Theta-M(\lambda))$, $i=\text{p},\text{c}, \text{r}$, where $\sigma_{\text{p}}$, $\sigma_{\text{c}}$, $\sigma_{\text{r}}$ denote the point, continuous and residual spectrum, respectively. The functions $\P$ and $M$ also allow to write Krein's formula relating the resolvent of $A_\Theta$ with that of $A_0 := S^* | \ker \Gamma_0$: for all $\lambda\in \rho(A_0) \cap \rho(A_\Theta)$, 
\begin{align}
    (A_\Theta - \lambda)^{-1} = (A_0-\lambda)^{-1} + \P(\lambda) (\Theta-M(\lambda))^{-1} \P(\overline{\lambda})^*.
    \label{eq:Krein}
\end{align}

Given the usefulness of $\P$ and $M$, the attention then turns toward their computation, or how to relate them to quantities such as a Dirichlet-to-Neumann map. As such, it is known that boundary triples for maximal operators involve some operations which make this purpose more difficult, and this motivates the definition of slightly weaker notions below, namely {\em generalized boundary triples}, first introduced in \cite{Derkach1995}, or even weaker yet, {\em quasi boundary triples}, first introduced in \cite{Behrndt2007}. 

\paragraph{Quasi/generalized-boundary triples and their Weyl functions.} A triple $\{\G,\Gamma_0,\Gamma_1\}$ is said to be a {\em quasi boundary triple} for $S^*$ if $(\G,\dprod{\cdot}{\cdot}_\G)$ is a Hilbert space and there exists an operator $T$ such that $\overline{T} = S^*$ and $\Gamma_0,\Gamma_1\colon \dom (T)\to \G$ are linear mappings such that $(\Gamma_0, \Gamma_1)\colon \dom (T)\to \G\times\G$ has dense range, \eqref{eq:Lagrange} holds for all $f,g\in \dom(T)$, and $A_0 := T|\ker \Gamma_0$ is self-adjoint in $\H$. If, in addition, $\Gamma_0$ is onto $\G$, then $\{\G,\Gamma_0,\Gamma_1\}$ is a {\em generalized boundary triple}.

Given \response{$R$} a linear relation in $\G$, we define the extension $A_\response{R}$ of $S$ in analogy to the above: 
\begin{align*}
	A_\response{R} := T| \{f\in \dom (T): (\Gamma_0 f, \Gamma_1 f) \in \response{R}\},
\end{align*}
although this no longer gives a bijective correspondence with self-adjoint extensions of $S$. Since the map $(\Gamma_0,\Gamma_1)$ is no longer surjective, we define $\G_{0} := \text{ran}(\Gamma_{0})$ and $\G_{1} := \text{ran}(\Gamma_{1})$, and we may associate to the above quasi boundary triple the Poisson map $P$ and Weyl function $M$, for any $\lambda\in \rho(A_0)$
\begin{align*}
    \P(\lambda) := (\Gamma_0|\N_\lambda(T))^{-1} \colon \G_0\to \H, \qquad M(\lambda) := \Gamma_1 \P(\lambda)\colon \G_0\to \G_1. 
\end{align*}
In the ``generalized'' boundary triple situation, $\G_0 = \G$, in which case $P(\lambda)$ and $M(\lambda)$ are defined on all of $\G$. Under additional assumptions, such functions share many properties of their ``boundary triple'' counterpart, and they are used to obtain, via \cite[Theorem 1.18]{Behrndt2012} \response{in the ``$\Theta$'' convention above}, sufficient conditions on $\Theta$ for $T|\ker (\Gamma_1-\Theta \Gamma_0)$ to be self-adjoint, allowing a parameterization of self-adjoint extensions of $S$ which are restrictions of $T$. Specifically: 

\begin{theorem}[Thm 1.18 in \cite{Behrndt2012}]\label{thm:Behrndt} 
    Let $S$ be a densely defined closed symmetric operator in $\H$ and let $\{\G,\Gamma_0,\Gamma_1\}$ be a quasi boundary triple for $\overline{T} = S^*$ with $A_i = T|\ker\Gamma_i$, $i=0,1$, and Weyl function $M$. Assume that $A_1$ is self-adjoint and that $\overline{M(\lambda_0)^{-1}}$ is a compact operator in $\G$ for some $\lambda_0\in \Cm\backslash \Rm$. If $\Theta$ is a bounded self-adjoint operator in $\G$ such that 
    \begin{align}
	\Theta(\dom \overline{M(\lambda_\pm)})\subset\G_1
	\label{eq:condBehrndt}
    \end{align}
    holds for some $\lambda_+\in \Cm^+$ and $\lambda_-\in \Cm^-$, then $A_\Theta := T|\ker(\Gamma_1-\Theta\Gamma_0)$ is a self-adjoint operator in $\H$. In particular, condition \eqref{eq:condBehrndt} is satisfied if $\text{ran}(\Theta) = \G_1$.
\end{theorem}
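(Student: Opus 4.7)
The plan is to first show symmetry of $A_\Theta$ and then upgrade to self-adjointness by establishing $\ran(A_\Theta - \lambda_\pm) = \H$ for the distinguished $\lambda_\pm$ supplied by the hypothesis. Symmetry is immediate: for $f,g\in \dom(A_\Theta)$ we have $\Gamma_1 f = \Theta\Gamma_0 f$ and $\Gamma_1 g = \Theta\Gamma_0 g$, so the abstract Green identity \eqref{eq:Lagrange} (valid on the larger domain $\dom(T)$) combined with self-adjointness of $\Theta$ in $\G$ gives
\begin{align*}
    \dprod{T f}{g}_{\H} - \dprod{f}{T g}_{\H} = \dprod{\Theta\Gamma_0 f}{\Gamma_0 g}_\G - \dprod{\Gamma_0 f}{\Theta\Gamma_0 g}_\G = 0.
\end{align*}

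For the range condition, fix $\lambda\in\{\lambda_+, \lambda_-\}$ and $h\in\H$ arbitrary. Since $A_0 = T|\ker\Gamma_0$ is self-adjoint by the quasi boundary triple hypothesis, $\lambda\in\rho(A_0)$, and there is a direct-sum decomposition $\dom(T) = \dom(A_0) \oplus \fN_\lambda(T)$. Writing any putative preimage as $f = f_0 + \P(\lambda)\varphi$ with $f_0\in\dom(A_0)$ and $\varphi\in\G_0 = \ran(\Gamma_0)$, the equation $(T-\lambda)f = h$ forces $f_0 = (A_0-\lambda)^{-1}h$, and the relations $\Gamma_0 \P(\lambda)\varphi = \varphi$, $\Gamma_1\P(\lambda)\varphi = M(\lambda)\varphi$, and $\Gamma_0 f_0 = 0$ reduce the boundary condition $\Gamma_1 f = \Theta\Gamma_0 f$ to the abstract equation
\begin{align*}
    (\Theta - M(\lambda))\varphi = \Gamma_1 (A_0-\lambda)^{-1} h.
\end{align*}

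The central step is therefore to invert $\Theta - M(\lambda_\pm)$ on an appropriate domain and check that the resulting $\varphi$ actually lies in $\G_0$. Compactness of $\overline{M(\lambda_0)^{-1}}$ should propagate to $\overline{M(\lambda_\pm)^{-1}}$ via a Hilbert-type resolvent identity relating $M(\lambda) - M(\lambda_0)$ to resolvents of $A_0$. On $\dom\overline{M(\lambda_\pm)}$ one can formally factor $\Theta - \overline{M(\lambda_\pm)} = -\overline{M(\lambda_\pm)}\bigl(I - \overline{M(\lambda_\pm)^{-1}}\,\Theta\bigr)$, where the hypothesis $\Theta(\dom\overline{M(\lambda_\pm)}) \subset \G_1$ is precisely what allows $\overline{M(\lambda_\pm)^{-1}}\Theta$ to be defined and compact on $\G$ (as the composition of a bounded map into $\G_1$ with a compact map out of $\G_1$). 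A Fredholm alternative applied to $I - \overline{M(\lambda_\pm)^{-1}}\Theta$, together with the Herglotz/monotonicity structure of $M$ (whose imaginary part at $\lambda_\pm\in\Cm\setminus\Rm$ is of definite sign, ruling out a nonzero kernel since $\Theta$ is self-adjoint), then yields bounded invertibility and hence a solution $\varphi$. Feeding this back through $\P(\lambda)$ produces $f\in\dom(A_\Theta)$ solving $(A_\Theta - \lambda)f = h$.

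The main obstacle will be the interplay between $M(\lambda_\pm)$ and its closure $\overline{M(\lambda_\pm)}$: the map $M(\lambda)$ is only a priori defined on $\G_0 \subsetneq \G$ with values in $\G_1 \subsetneq \G$, whereas the Fredholm argument naturally produces $\varphi$ only in $\dom\overline{M(\lambda_\pm)}$. Condition \eqref{eq:condBehrndt} is tailored to bridge this gap, but one still needs to verify carefully that the $\varphi$ produced actually lies in $\G_0 = \ran(\Gamma_0)$, so that $\P(\lambda)\varphi$ yields an honest element of $\fN_\lambda(T)\subset\dom(T)$. This boundary-regularity subtlety — absent in the classical boundary triple setting where $(\Gamma_0,\Gamma_1)$ is surjective onto $\G\times\G$ — is the delicate point that makes the quasi boundary triple framework genuinely more technical.
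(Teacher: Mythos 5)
A point of order first: the paper does not prove this statement. It is quoted verbatim as Theorem 1.18 of \cite{Behrndt2012} and used as a black box, so there is no in-paper proof to compare against. Measured against the original Behrndt--Langer argument, your skeleton is the correct one: symmetry of $A_\Theta$ from the abstract Green identity \eqref{eq:Lagrange} and self-adjointness of $\Theta$; the decomposition $\dom(T)=\dom(A_0)\oplus\fN_\lambda(T)$ for $\lambda\in\rho(A_0)$; the reduction of $(A_\Theta-\lambda)f=h$ to the boundary equation $(\Theta-M(\lambda))\varphi=\Gamma_1(A_0-\lambda)^{-1}h$; injectivity of $\Theta-M(\lambda_\pm)$ from the sign of $\operatorname{Im}(M(\lambda)\varphi,\varphi)=\operatorname{Im}\lambda\,\|\P(\lambda)\varphi\|^2$; and a Fredholm alternative for $I-\overline{M(\lambda_\pm)^{-1}}\,\Theta$. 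That is the published proof, step for step.

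Two steps of your sketch would fail as written. First, the propagation of compactness from $\overline{M(\lambda_0)^{-1}}$ to $\overline{M(\lambda_\pm)^{-1}}$ cannot be obtained from an identity for $M(\lambda)-M(\lambda_0)$ in terms of resolvents of $A_0$: knowing the two Weyl function values differ by a bounded operator says nothing about compactness of their inverses. The actual mechanism is that $\lambda\mapsto -M(\lambda)^{-1}$ is the Weyl function of the reversed triple $\{\G,\Gamma_1,-\Gamma_0\}$, which is a quasi boundary triple precisely because $A_1$ is self-adjoint; one uses $\operatorname{Im}N(\lambda_0)=\operatorname{Im}\lambda_0\,\P_N(\lambda_0)^*\P_N(\lambda_0)$ to extract compactness of the associated Poisson map from compactness of $\overline{N(\lambda_0)}$, and then the identity $\P_N(\lambda)=(I+(\lambda-\lambda_0)(A_1-\lambda)^{-1})\P_N(\lambda_0)$ to spread it over $\rho(A_1)\supset\Cm\setminus\Rm$. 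Your sketch never invokes the hypothesis that $A_1$ is self-adjoint, which is the telltale sign that this is where the missing content lives. Second, condition \eqref{eq:condBehrndt} is not what makes $\overline{M(\lambda_\pm)^{-1}}\,\Theta$ compact (that is automatic, since $\Theta$ is bounded and $\overline{M(\lambda_\pm)^{-1}}$ is compact on $\G$); its role is to legitimize the factorization $\Theta-M(\lambda)=-M(\lambda)\bigl(I-M(\lambda)^{-1}\Theta\bigr)$ on the relevant domain and to guarantee that the $\varphi$ produced by the Fredholm alternative actually lies in $\G_0=\ran\Gamma_0$ --- the domain-bookkeeping subtlety you correctly identify at the end but do not resolve.
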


A more recent refinement, dropping the above compactness requirement on $\overline{M(\lambda_0)^{-1}}$, is the following result, formulated in \response{the ``$B$'' convention mentioned above}.

\begin{theorem}[Corollary 2.7 in \cite{Behrndt2017}]\label{thm:Berhndtetal}
    Let $\{\G,\Gamma_0,\Gamma_1\}$ be a quasi boundary triple for $T\subset S^*$ with corresponding Poisson map $\P$ and Weyl function $M$. Let $B$ be a bounded self-adjoint operator in $\G$ and assume that there exists a $\lambda_0\in \rho(A_0)\cap \Rm$ such that the following conditions are satisfied: 

    (i) $1\in \rho(B \overline{M(\lambda_0)})$,

    (ii) $B(\ran \overline{M(\lambda_0)})) \subset \ran \Gamma_0$,

    (iii) $B(\ran\Gamma_1) \subset \ran\Gamma_0$ or $\lambda_0\in \rho(A_1)$. 

    Then the operator $A_{[B]}\response{:= T|\ker (B \Gamma_1 - \Gamma_0)}$ is a self-adjoint extension of $S$ such that $\lambda_0 \in \rho(A_{[B]})$, and the resolvent formula 
    \begin{align}
	(A_{[B]}-\lambda)^{-1} = (A_0-\lambda)^{-1} + \P(\lambda) (I - B M(\lambda))^{-1} B \P(\bar{\lambda})^*
	\label{eq:resolvent}
    \end{align}
    holds for all $\lambda \in \rho(A_{[B]}) \cap \rho(A_0)$.     
\end{theorem}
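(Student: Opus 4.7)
The plan is to establish that $A_{[B]}$ is self-adjoint by first verifying symmetry, then proving that $\lambda_0\in\rho(A_{[B]})$ with resolvent given by \eqref{eq:resolvent}, and finally invoking the standard criterion that a closed symmetric operator possessing a real regular point is self-adjoint. The resolvent formula is then a byproduct of the second step.

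For symmetry, take $f,g\in\dom(A_{[B]})$, so that $B\Gamma_1 f=\Gamma_0 f$ and $B\Gamma_1 g=\Gamma_0 g$. The abstract Green identity \eqref{eq:Lagrange} (valid for $T$ in place of $S^*$) gives
\begin{align*}
    (Tf,g)_\H-(f,Tg)_\H = (\Gamma_1 f,\Gamma_0 g)_\G-(\Gamma_0 f,\Gamma_1 g)_\G = (\Gamma_1 f,B\Gamma_1 g)_\G-(B\Gamma_1 f,\Gamma_1 g)_\G,
\end{align*}
which vanishes by self-adjointness of $B$. Hence $A_{[B]}$ is symmetric; since it is a restriction of $T$ to a subspace defined by a closed boundary relation, one also verifies it is closed.

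To prove $\lambda_0\in\rho(A_{[B]})$ together with \eqref{eq:resolvent}, I would use the ansatz $u:=(A_0-\lambda_0)^{-1}\phi+P(\lambda_0)h$ for arbitrary $\phi\in\H$, where $h\in\G_0$ is to be determined. By construction $(T-\lambda_0)u=\phi$, since $P(\lambda_0)h\in\N_{\lambda_0}(T)$. The membership $u\in\dom(A_{[B]})$ translates, via $\Gamma_0(A_0-\lambda_0)^{-1}=0$ and $\Gamma_0 P(\lambda_0)=I_{\G_0}$, into the equation
\begin{align*}
    h=\Gamma_0 u=B\Gamma_1 u = B\bigl[\Gamma_1(A_0-\lambda_0)^{-1}\phi+M(\lambda_0)h\bigr].
\end{align*}
The general identity $\Gamma_1(A_0-\lambda_0)^{-1}=P(\overline{\lambda_0})^{*}$ (valid on $\H$, with $\lambda_0$ real) reduces this to $(I-BM(\lambda_0))h = BP(\overline{\lambda_0})^*\phi$. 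Condition (i), read with the closure, gives $h=(I-B\overline{M(\lambda_0)})^{-1}BP(\overline{\lambda_0})^*\phi$; conditions (ii) and (iii) are what guarantee that the right-hand side actually lies in $\G_0=\ran\Gamma_0$ (so that $P(\lambda_0)h$ is defined and $u\in\dom(T)$) rather than merely in $\G$. Injectivity of $A_{[B]}-\lambda_0$ follows similarly: if $u\in\dom(A_{[B]})\cap\N_{\lambda_0}(T)$, then $u=P(\lambda_0)\Gamma_0 u$ and $\Gamma_0 u=B\Gamma_1 u=BM(\lambda_0)\Gamma_0 u$, so $(I-BM(\lambda_0))\Gamma_0 u=0$ and (i) forces $\Gamma_0 u=0$, hence $u=0$.

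The delicate point—and the main technical obstacle—is the domain bookkeeping forced by the quasi boundary triple setting: $P(\lambda_0)$ and $M(\lambda_0)$ are only \emph{a priori} defined on $\G_0\subsetneq\G$, whereas the bounded operator $B$ acts on all of $\G$ and generically need not preserve $\G_0$. Conditions (ii) and (iii) are engineered precisely to control this: (ii) ensures that $B\overline{M(\lambda_0)}$ can be inverted with range landing back in $\ran\Gamma_0$, while (iii) (or the alternative hypothesis $\lambda_0\in\rho(A_1)$, used to strengthen the identity $\Gamma_1(A_0-\lambda_0)^{-1}=P(\overline{\lambda_0})^*$) ensures that $B\Gamma_1$ maps appropriately into $\ran\Gamma_0$ on the relevant pieces. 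Once these are verified, $(A_{[B]}-\lambda_0)^{-1}$ is bounded on $\H$, establishing $\lambda_0\in\rho(A_{[B]})$ and hence self-adjointness; the explicit form of $u$ produced above is exactly the right-hand side of \eqref{eq:resolvent} at $\lambda=\lambda_0$, and an analytic continuation (or direct repetition of the argument for general $\lambda\in\rho(A_{[B]})\cap\rho(A_0)$) extends the formula.
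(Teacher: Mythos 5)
This statement is quoted verbatim from \cite{Behrndt2017} (Corollary 2.7); the paper offers no proof of its own, so there is nothing internal to compare against, only the argument in the cited reference. Your sketch reproduces that argument correctly in outline --- symmetry of $A_{[B]}$ from the abstract Green identity together with $B=B^*$, bijectivity of $A_{[B]}-\lambda_0$ via the Krein-type ansatz $u=(A_0-\lambda_0)^{-1}\phi+\P(\lambda_0)h$ with the identity $\Gamma_1(A_0-\lambda_0)^{-1}=\P(\overline{\lambda_0})^*$ and with conditions (i)--(iii) ensuring $h\in\ran\Gamma_0$, and finally self-adjointness of a symmetric operator admitting a real point of regular type with full range --- so it is essentially the same proof as in the source.
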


Further, if $\{\G,\Gamma_0,\Gamma_1\}$ is a generalized boundary triple, then conditions (ii) and (iii) are automatically satisfied and $\overline{M(\lambda_0)} = M(\lambda_0)$. In that case, the conditions for self-adjointness reduce to the condition that $1\in \rho(B M(\lambda_0))$.

\subsubsection{Corollaries of Theorems \ref{thm:first} and \ref{thm:second}}

Identity \eqref{eq:G2extmax} implies that, upon defining isometric isomorphisms 
\begin{align*}
    \iota_{\pm (1-|\gamma|)} \colon H^{\pm (1-|\gamma|)}(\Sm^1) \to L^2(\Sm^1)    
\end{align*}
such that $\langle \varphi, \psi\rangle_{-1+|\gamma|,1-|\gamma|} = (\iota_- \varphi, \iota_+ \psi )_{L^2(\Sm^1)}$, we have, for all $f,g\in \dom(\L_{\gamma,max})$, 
\begin{align}
    \dprod{\L_\gamma f}{g}_{L^2_\gamma} - \dprod{f}{\L_\gamma g}_{L^2_\gamma} = \dprod{\iota_{1-|\gamma|} \tau_\gamma^N f_D}{\iota_{-1+|\gamma|} \tilde\tau_\gamma^D g}_{L^2(\Sm^1)} - \dprod{\iota_{-1+|\gamma|} \tilde\tau_\gamma^D f}{\iota_{1-|\gamma|} \tau_\gamma^N g_D}_{L^2(\Sm^1)},
    \label{eq:IBP_dommax_BT}
\end{align}
where $f_D := (\L_{\gamma,D}-\lambda)^{-1} (\L_{\gamma}-\lambda) f$ is the projection onto the left summand of \eqref{eq:directsum} for any $\lambda\in \rho(\L_{\gamma,D})$. Following the definitions above, equation \eqref{eq:IBP_dommax_BT} exactly means the following: 
\begin{corollary}\label{cor:first}
    Fix $\lambda\in \rho(\L_{\gamma,D})$ and let $f\mapsto f_D$ be the projection onto the left summand in \eqref{eq:IBP_dommax_BT}. Upon defining the maps $\Gamma_{0,1} \colon \dom(\L_{\gamma,max}) \to L^2(\Sm^1)$ by
    \begin{align}
	\Gamma_0 f := \iota_{-1+|\gamma|} \tilde\tau_\gamma^D f, \qquad \Gamma_1 g := \iota_{1-|\gamma|} \tau_\gamma^N g_D, \qquad f,g, \in \dom(\L_{\gamma,max}), 
	\label{eq:Gamma01}
    \end{align}
    where $\tilde{\tau}_\gamma^D$, $\neu$ are defined in Theorem \ref{thm:first}, the triple $\{L^2(\Sm^1), \Gamma_0, \Gamma_1\}$ is a boundary triple for the operator $\L_{\gamma,max}$.     
\end{corollary}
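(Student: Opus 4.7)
The plan is to verify the three defining axioms of a boundary triple for $\L_{\gamma,max} = \L_{\gamma,min}^*$ with $\G = L^2(\Sm^1)$ and the maps $\Gamma_0, \Gamma_1$ of \eqref{eq:Gamma01}: (i) $\Gamma_0, \Gamma_1$ are linear mappings $\dom(\L_{\gamma,max}) \to L^2(\Sm^1)$; (ii) the abstract Lagrange identity \eqref{eq:Lagrange} holds on $\dom(\L_{\gamma,max})$; and (iii) the joint map $(\Gamma_0,\Gamma_1) \colon \dom(\L_{\gamma,max}) \to L^2(\Sm^1)\times L^2(\Sm^1)$ is surjective.

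Properties (i) and (ii) are essentially immediate. For (i), linearity is clear, the projection $f \mapsto f_D = (\L_{\gamma,D}-\lambda)^{-1}(\L_\gamma-\lambda)f$ maps $\dom(\L_{\gamma,max})$ linearly onto $\dom(\L_{\gamma,D})$ by Lemma \ref{lem:directsum}, and the mapping properties follow by composing Theorem \ref{thm:second} with the isometric isomorphisms $\iota_{\pm(1-|\gamma|)}$ onto $L^2(\Sm^1)$. Property (ii) is exactly \eqref{eq:IBP_dommax_BT}, which is a transcription of \eqref{eq:G2extmax} via the $\iota_{\pm(1-|\gamma|)}$.

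The substantive step is (iii). My strategy is to exploit the direct sum \eqref{eq:directsum} to \emph{decouple} the two traces. I would first establish that $\tilde\tau_\gamma^D$ vanishes on $\dom(\L_{\gamma,D}) = \wtH^{2,\gamma}_D(\Dm)$: on $\A_{\gamma,D} = \ker \tau_\gamma^D$ (see \eqref{eq:AgammaD}), which lies in a core of $\L_{\gamma,D}$ via Lemma \ref{lem:Dcharac}, this is tautological, and the extension to all of $\dom(\L_{\gamma,D})$ follows from boundedness of $\tilde\tau_\gamma^D$ in the $\L_{\gamma,max}$-graph topology combined with density. Granted this, for any $f = f_D + f_\lambda \in \dom(\L_{\gamma,max})$, the two traces decouple as
\begin{align*}
    \Gamma_0 f = \iota_{-1+|\gamma|} \tilde\tau_\gamma^D f_\lambda, \qquad \Gamma_1 f = \iota_{1-|\gamma|} \tau_\gamma^N f_D,
\end{align*}
so the summands may be chosen independently. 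Given $(\varphi_0,\varphi_1) \in L^2(\Sm^1)^2$, the surjectivity of $\tau_\gamma^N \colon \wtH^{2,\gamma}_D(\Dm) \to H^{1-|\gamma|}(\Sm^1)$ (Theorem \ref{thm:second}) produces $f_D \in \dom(\L_{\gamma,D})$ realizing $\Gamma_1 f = \varphi_1$. For $f_\lambda$, since $\tilde\tau_\gamma^D \colon \dom(\L_{\gamma,max}) \to H^{-1+|\gamma|}(\Sm^1)$ is surjective while vanishing on the $\dom(\L_{\gamma,D})$ summand, its restriction to $\fN_\lambda(\L_{\gamma,max})$ must remain onto $H^{-1+|\gamma|}(\Sm^1)$, furnishing $f_\lambda$ with $\Gamma_0 f_\lambda = \varphi_0$. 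Setting $f = f_D + f_\lambda$ finishes surjectivity.

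I expect the main obstacle to be the preparatory claim that $\tilde\tau_\gamma^D$ annihilates $\dom(\L_{\gamma,D})$: while structurally natural since $\L_{\gamma,D}$ is the Dirichlet (Friedrichs) extension, the rigorous justification depends on how $\tilde\tau_\gamma^D$ is concretely extended from $\tau_\gamma^D$ to the maximal domain, which is presumably carried out via the direct sum decomposition and a careful density/continuity argument developed in the proof of Theorem \ref{thm:second}. Once this is in hand, the remaining assembly is routine.
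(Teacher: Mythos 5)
Your proof is correct and follows essentially the same route as the paper, which treats the corollary as an immediate transcription of \eqref{eq:IBP_dommax_BT} combined with the surjectivity statements of Theorem \ref{thm:second} and the decomposition of Lemma \ref{lem:directsum}. The one preparatory ingredient you rightly isolate --- that $\tilde\tau_\gamma^D$ annihilates $\dom(\L_{\gamma,D})$, which is what decouples $\Gamma_0$ and $\Gamma_1$ --- is indeed needed and correct; besides your core/density argument, it also follows in one line from the defining formula \eqref{eq:Dext} and the self-adjointness of $\L_{\gamma,D}$, since $Rh\in\dom(\L_{\gamma,D})$ there.
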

From the previous section, Corollary \ref{cor:first} then allows to parameterize all self-adjoint extensions of $\L_{\gamma,min}$ in terms of self-adjoint boundary relations. As mentioned in the previous section, the appearance of the map $g\mapsto g_D$ changes the computation of the $M$ function, as the latter is no longer a factorization of the Dirichlet-to-Neumann map. It could still in principle \response{be} computed, following ideas as in, e.g. \cite[Lemma 8.4.5]{Behrndt2020}, applied to Schr\"odinger operators there.

A way to simplify the description is to introduce the intermediate extension $T := \L_{\gamma,max}| W_\gamma^2$, with $W_\gamma^2$ defined in \eqref{eq:W2gamma}. Identity \eqref{eq:G2ext} implies that, upon defining isometric isomorphisms $\iota_{(\gamma)}\colon H_{(\gamma)}\to L^2(\Sm^1)$ and $\iota_{(\gamma)'} \colon H_{(\gamma)}'\to L^2(\Sm^1)$, we have for all $f,g\in W_\gamma^2$, 
\begin{align*}
    \dprod{\L_\gamma f}{g}_{L^2_\gamma} - \dprod{f}{\L_\gamma g}_{L^2_\gamma} = \dprod{\iota_{(\gamma)'} \tau_\gamma^N f}{\iota_{(\gamma)} \tau_\gamma^D g}_{L^2(\Sm^1)} - \dprod{\iota_{(\gamma)} \tau_\gamma^D f}{\iota_{(\gamma)'} \tau_\gamma^N g}_{L^2(\Sm^1)}.
\end{align*}
Moreover, the operator $\Gamma_0 := \iota_{(\gamma)} \tau_\gamma^D| W_\gamma^2$ is surjective, i.e. $\G_0 = L^2(\Sm^1)$ (indeed, inspecting the proof of Theorem \ref{thm:DNmap}, for any $f\in H_{(\gamma)}$, one constructs a function $u_f\in W_\gamma^2$ such that $\tau_\gamma^D u_f = f$) and $\Gamma_1:= \iota_{(\gamma)'} \tau_\gamma^N$ has dense range in $L^2(\Sm^1)$ since $\A_\gamma\subset W^2_\gamma$ and $\tau_\gamma^N (\A_\gamma) = C^\infty(\Sm^1)$. Together with the fact that $T|\ker\Gamma_0 = \L_{\gamma,D}$ is self-adjoint, we can then conclude: 

\begin{corollary}
    The triple $\{L^2(\Sm^1), \iota_{(\gamma)} \tau_\gamma^D , \iota_{(\gamma)'} \tau_\gamma^N \}$ is a generalized boundary triple for $S^* = \L_{\gamma,max}$ via the extension $T = \L_{\gamma,max}|W_\gamma^2$. Moreover, the Weyl function $M_\gamma(\lambda)\colon L^2(\Sm^1) \to L^2(\Sm^1)$ is given by $M_\gamma(\lambda) = \iota_{(\gamma)'} \Lambda_\gamma (\lambda) \iota_{(\gamma)}^{-1}$, where $\Lambda_\gamma$ is the Dirichlet-to-Neumann map defined in Theorem \ref{thm:DNmap}.
\end{corollary}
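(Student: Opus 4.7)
The plan is to verify the five defining properties of a generalized boundary triple for $S^* = \L_{\gamma,max}$ taking $T := \L_{\gamma,max}|W_\gamma^2$ as the intermediate operator, $\G = L^2(\Sm^1)$ as the parameter space, and $\Gamma_0 := \iota_{(\gamma)} \tau_\gamma^D$, $\Gamma_1 := \iota_{(\gamma)'}\tau_\gamma^N$ on $\dom(T) = W_\gamma^2$; then read off the Weyl function from Theorem \ref{thm:DNmap}. The five axioms to check are: (i) $\overline{T} = \L_{\gamma,max}$; (ii) the abstract Lagrange identity \eqref{eq:Lagrange} holds on $W_\gamma^2$; (iii) the pair $(\Gamma_0,\Gamma_1)$ has dense range in $L^2(\Sm^1)\times L^2(\Sm^1)$; (iv) $A_0 := T|\ker\Gamma_0$ is self-adjoint; (v) $\Gamma_0$ is surjective onto $L^2(\Sm^1)$.

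Axioms (ii)--(v) are almost immediate consequences of preceding results. For (ii), the isometries are, by construction, the ones transporting the duality pairing to the $L^2(\Sm^1)$ inner product, so \eqref{eq:G2ext} applied on $W_\gamma^2$ rewrites as the Lagrange identity. For (iii), I would use $\A_\gamma \subset W_\gamma^2$ together with the explicit formulas \eqref{eq:tracescomp}: by choosing the smooth coefficients $f^{(0)}$ and $f^{(-\gamma)}$ (or $f^{(0)}$ and $f^{(\log)}$ when $\gamma=0$) independently, the image $(\tau_\gamma^D, \tau_\gamma^N)(\A_\gamma)$ contains $C^\infty(\Sm^1) \times C^\infty(\Sm^1)$, and pushing through $\iota_{(\gamma)}\times \iota_{(\gamma)'}$ yields a dense subset of $L^2(\Sm^1)\times L^2(\Sm^1)$. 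For (iv), since $\iota_{(\gamma)}$ is an isomorphism, $\ker\Gamma_0 = \ker\tau_\gamma^D \cap W_\gamma^2$; by Theorem \ref{thm:first}, $\ker\tau_\gamma^D \cap \wtH^{1,\gamma}(\Dm) = \wtH^{1,\gamma}_0(\Dm)$, so $\ker\Gamma_0 = \wtH^{1,\gamma}_0(\Dm)\cap\dom(\L_{\gamma,max})$, which matches the Friedrichs characterization $\dom(\L_{\gamma,D})$; thus $A_0 = \L_{\gamma,D}$ is self-adjoint. For (v), I would reuse the remark just preceding the statement: inspecting the proof of Theorem \ref{thm:DNmap} yields, for each $\varphi\in H_{(\gamma)}$, some $u_\varphi\in W_\gamma^2$ with $\tau_\gamma^D u_\varphi = \varphi$, and $\iota_{(\gamma)}$ is onto.

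The main obstacle is axiom (i), namely showing that $W_\gamma^2$ is dense in $\dom(\L_{\gamma,max})$ for the graph norm. My plan is to use the direct sum \eqref{eq:directsum}: fixing $\lambda\in\rho(\L_{\gamma,D})$, any $f\in\dom(\L_{\gamma,max})$ splits as $f = f_D + f_\lambda$ with $f_D\in\dom(\L_{\gamma,D}) \subset \wtH^{1,\gamma}_0(\Dm)\subset W_\gamma^2$ automatically, so it suffices to approximate the kernel part $f_\lambda\in\fN_\lambda(\L_{\gamma,max})$ by elements of $\fN_\lambda(\L_{\gamma,max})\cap W_\gamma^2$. On this summand the graph norm is equivalent to $\|\cdot\|_{L^2_\gamma}$, and by Theorem \ref{thm:second}, $\tilde\tau_\gamma^D$ parametrizes $\fN_\lambda(\L_{\gamma,max})$ by $H^{-1+|\gamma|}(\Sm^1)$, so the task reduces to approximating $\tilde\tau_\gamma^D f_\lambda \in H^{-1+|\gamma|}(\Sm^1)$ by data in the denser subspace $H_{(\gamma)}$, combined with an $L^2_\gamma$-continuity statement for the Poisson map extended to $H^{-1+|\gamma|}$. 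Once (i)--(v) are established, the Weyl function follows immediately: for $\psi\in L^2(\Sm^1)$, set $u := P(\lambda)\psi\in\N_\lambda(T)\cap W_\gamma^2$, so that $\tau_\gamma^D u = \iota_{(\gamma)}^{-1}\psi$. Since $u$ solves $(\L_\gamma-\lambda)u=0$ in $W_\gamma^2$, Theorem \ref{thm:DNmap} gives $\tau_\gamma^N u = \Lambda_\gamma(\lambda)\iota_{(\gamma)}^{-1}\psi$, whence
\begin{align*}
M_\gamma(\lambda)\psi = \Gamma_1 u = \iota_{(\gamma)'}\tau_\gamma^N u = \iota_{(\gamma)'}\Lambda_\gamma(\lambda)\iota_{(\gamma)}^{-1}\psi,
\end{align*}
as claimed.
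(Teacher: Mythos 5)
Your verification of axioms (ii)--(v) and the computation of the Weyl function follow the paper's argument essentially verbatim (the paper's "proof" is the paragraph preceding the corollary, which checks the transported Green identity \eqref{eq:G2ext}, surjectivity of $\Gamma_0$ via the proof of Theorem \ref{thm:DNmap}, dense range of $\Gamma_1$ via $\A_\gamma\subset W_\gamma^2$, and $T|\ker\Gamma_0=\L_{\gamma,D}$), and your extra details there --- the joint dense range of $(\Gamma_0,\Gamma_1)$ from \eqref{eq:tracescomp}, and the identification $\ker\Gamma_0=\wtH^{1,\gamma}_0\cap\dom(\L_{\gamma,max})=\dom(\L_{\gamma,D})$ --- are correct and in fact slightly more careful than the paper's. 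Where you genuinely diverge is axiom (i), $\overline{T}=\L_{\gamma,max}$: the paper disposes of this in one line via Lemma \ref{lem:agammadense}, which states $\dom(\L_{\gamma,max})=\overline{\A_\gamma}^{\,n_\gamma}$, so that $\A_\gamma\subset W_\gamma^2\subset\dom(\L_{\gamma,max})$ immediately gives graph-norm density of $W_\gamma^2$. Your alternative --- split $f=f_D+f_\lambda$ via \eqref{eq:directsum}, note $f_D\in W_\gamma^2$, and approximate $f_\lambda$ by elements of $\fN_\lambda(\L_{\gamma,max})\cap W_\gamma^2$ by approximating $\tilde\tau_\gamma^D f_\lambda\in H^{-1+|\gamma|}(\Sm^1)$ by smooth data and applying the Poisson map --- is workable: for $\lambda=0$ the required $L^2_\gamma$-continuous right inverse of $\tilde\tau_\gamma^D$ valued in $\fN_0$ is exactly the operator $\Upsilon'$ constructed in the proof of Proposition \ref{prop:tauDext}, and its injectivity on $\fN_0$ (needed to identify $\Upsilon'\varphi_n$ with the DN-map solution $u_{\varphi_n}\in W_\gamma^2$ and to conclude $\Upsilon'\circ\tilde\tau_\gamma^D=\mathrm{id}$ on $\fN_0$) follows from \eqref{eq:BTid} and the surjectivity of $\L_{\gamma,D}$. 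So your route buys nothing over the paper's but costs more: it is left as a plan, and to complete it you would need to spell out precisely these injectivity/continuity facts, whereas citing Lemma \ref{lem:agammadense} closes the axiom in one sentence. I would recommend replacing your sketch for (i) by that citation.
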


This result, combined with Theorem \ref{thm:Berhndtetal} and the remark right after it, allows to give a simple criterion for the self-adjointness of extensions of $\L_{\gamma,min}$ with domain included in $W_\gamma^2$, for a wide range of boundary conditions \response{including, e.g., conditions of Robin type relating both traces}. 

\begin{corollary}\label{cor:consequence} Let $\gamma\in (-1,1)$, $\{L^2(\Sm^1), \iota_{(\gamma)} \tau_\gamma^D , \iota_{(\gamma)'} \tau_\gamma^N \}$, $T$ and $M_\gamma(\lambda)$ be as in the previous result. Let $B$ be a bounded, self-adjoint operator in $L^2(\Sm^1)$ and assume that there exists $\lambda_0 \in \rho(\L_{\gamma,D})\cap \Rm$ such that 
    \begin{align*}
	1\in \rho(B M_\gamma(\lambda_0)).
    \end{align*}
    Then the operator $\L_{\gamma,[B]} := T|\ker (B \iota_{(\gamma)'} \tau_\gamma^N - \iota_{(\gamma)} \tau_\gamma^D)$ is a self-adjoint extension of $\L_{\gamma,min}$ such that $\lambda_0 \in \rho(\L_{\gamma,[B]})$ and the resolvent formula 
    \begin{align*}
	(\L_{\gamma,[B]}-\lambda)^{-1} = (\L_{\gamma,D}-\lambda)^{-1} + \P_\gamma(\lambda) (I - BM_\gamma(\lambda))^{-1} B \P_\gamma (\bar{\lambda})^*
    \end{align*}
    holds for all $\lambda\in \rho(\L_{\gamma,[B]}) \cap \rho(\L_{\gamma,D})$, where $P_\gamma(\lambda) := (\iota_{(\gamma)} \tau_\gamma^D|\N_\lambda (\L_{\gamma,max}))^{-1}$.
\end{corollary}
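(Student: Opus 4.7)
I would read the statement as a direct application of Theorem~\ref{thm:Berhndtetal} to the generalized boundary triple $\{L^2(\Sm^1), \iota_{(\gamma)}\tau_\gamma^D, \iota_{(\gamma)'}\tau_\gamma^N\}$ for the extension $T=\L_{\gamma,max}|W_\gamma^2$ of $S=\L_{\gamma,min}$, whose existence has just been established in the preceding corollary. In that abstract setup, $A_0:=T|\ker\Gamma_0$ is by construction $\L_{\gamma,D}$ (this is the self-adjointness requirement built into the notion of generalized boundary triple, stated explicitly there), so $\rho(A_0)=\rho(\L_{\gamma,D})$; and the Weyl function is $M(\lambda)=M_\gamma(\lambda)=\iota_{(\gamma)'}\Lambda_\gamma(\lambda)\iota_{(\gamma)}^{-1}$, a bounded operator on $L^2(\Sm^1)$ for every $\lambda\in\rho(\L_{\gamma,D})$ by Theorem~\ref{thm:DNmap}.

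The next step is to exploit the simplification recorded in the remark following Theorem~\ref{thm:Berhndtetal}: because our triple is a \emph{generalized} (not merely quasi) boundary triple, $\ran\Gamma_0=L^2(\Sm^1)$, which forces conditions (ii) and (iii) of that theorem to hold automatically, and also gives $\overline{M_\gamma(\lambda_0)}=M_\gamma(\lambda_0)$. The only remaining hypothesis, condition (i) $1\in\rho(BM_\gamma(\lambda_0))$, is exactly the standing assumption of the corollary. Together with $\lambda_0\in\rho(\L_{\gamma,D})\cap\Rm$ and the boundedness and self-adjointness of $B$ on $L^2(\Sm^1)$, all hypotheses of Theorem~\ref{thm:Berhndtetal} are met. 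Invoking it then yields that $\L_{\gamma,[B]}=T|\ker(B\Gamma_1-\Gamma_0)$ is a self-adjoint extension of $\L_{\gamma,min}$ with $\lambda_0\in\rho(\L_{\gamma,[B]})$, and that the abstract resolvent formula \eqref{eq:resolvent} holds on $\rho(\L_{\gamma,[B]})\cap\rho(\L_{\gamma,D})$.

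Finally, to translate \eqref{eq:resolvent} into the stated formula, I need to match the abstract Poisson map $\P(\lambda)=(\Gamma_0|\fN_\lambda(T))^{-1}$ with $\P_\gamma(\lambda)=(\iota_{(\gamma)}\tau_\gamma^D|\fN_\lambda(\L_{\gamma,max}))^{-1}$. This reduces to verifying the inclusion $\fN_\lambda(\L_{\gamma,max})\subset W_\gamma^2=\dom(T)$: decomposing $u\in\fN_\lambda(\L_{\gamma,max})$ through Lemma~\ref{lem:directsum} at $\lambda\in\rho(\L_{\gamma,D})$, the construction underlying Theorem~\ref{thm:DNmap} already shows that any element of the $\fN_\lambda$-summand admits a $W_\gamma^2$-representative (one produces a $W_\gamma^2$-lift $u_f$ of $\tau_\gamma^D u\in H_{(\gamma)}$ and corrects by $(\L_{\gamma,D}-\lambda)^{-1}(\lambda-\L_\gamma)u_f\in\dom(\L_{\gamma,D})\subset W_\gamma^2$). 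I do not anticipate a genuine obstacle here: the analytic substance - traces, Green's identities, boundedness of the Dirichlet-to-Neumann map, and the generalized boundary triple structure - has already been packaged into the preceding theorems and corollary, and the identification of Poisson maps above is the only bookkeeping point required.
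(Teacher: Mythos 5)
Your main argument is exactly the paper's: the corollary is stated as a direct application of Theorem~\ref{thm:Berhndtetal} to the generalized boundary triple $\{L^2(\Sm^1),\iota_{(\gamma)}\tau_\gamma^D,\iota_{(\gamma)'}\tau_\gamma^N\}$ with $T=\L_{\gamma,max}|W_\gamma^2$, using the remark after that theorem to dispose of conditions (ii) and (iii) and of the closure $\overline{M(\lambda_0)}$, so that the sole remaining hypothesis is $1\in\rho(BM_\gamma(\lambda_0))$. That part is correct and is precisely how the paper intends the corollary to be read.

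The last paragraph, however, contains a false claim. The inclusion $\fN_\lambda(\L_{\gamma,max})\subset W_\gamma^2$ does \emph{not} hold: by Proposition~\ref{prop:tauDext} the trace $\tilde\tau_\gamma^D$ maps $\dom(\L_{\gamma,max})$ \emph{onto} $H^{-1+|\gamma|}(\Sm^1)$ and annihilates $\dom(\L_{\gamma,D})$, so by the splitting \eqref{eq:directsum} it maps $\fN_\lambda(\L_{\gamma,max})$ onto all of $H^{-1+|\gamma|}(\Sm^1)$; on the other hand $\tau_\gamma^D(W_\gamma^2)\subset H_{(\gamma)}=H^{|\gamma|}(\Sm^1)$, which is a proper subspace of $H^{-1+|\gamma|}(\Sm^1)$. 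Hence $\fN_\lambda(\L_{\gamma,max})\cap W_\gamma^2$ is a proper subspace of $\fN_\lambda(\L_{\gamma,max})$, and your proposed verification (lifting $\tau_\gamma^D u\in H_{(\gamma)}$ to $W_\gamma^2$) is circular, since $\tau_\gamma^D u\in H_{(\gamma)}$ already presupposes $u\in\wtH^{1,\gamma}$. Fortunately no such inclusion is needed: the Poisson map of a generalized boundary triple is by definition $\P(\lambda)=(\Gamma_0|\N_\lambda(T))^{-1}$, i.e.\ the restriction of $\iota_{(\gamma)}\tau_\gamma^D$ to $\fN_\lambda(\L_{\gamma,max})\cap\dom(T)$, and the notation $\N_\lambda(\L_{\gamma,max})$ in the statement of the corollary is to be read with this implicit restriction to $\dom(T)=W_\gamma^2$ (surjectivity of $\Gamma_0|\N_\lambda(T)$ onto $\G_0=L^2(\Sm^1)$ being supplied by the construction in the proof of Theorem~\ref{thm:DNmap}). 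With that reading, the identification of $\P_\gamma(\lambda)$ with the abstract Poisson map is immediate and the resolvent formula \eqref{eq:resolvent} transcribes directly into the stated one.
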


Refinements of the above results \response{may leverage norm estimates of $M_\gamma(\lambda_0)$ to show that the remaining condition is always satisfied, see e.g. \cite{Behrndt2017} situations where $\|M(\lambda)\|\to 0$ as $\lambda\to -\infty$. A more in-depth inquiry of such aspects, particularizing to certain boundary conditions, will be the object of future work.}

\paragraph{Outline of next sections.} The remainder of the article is organized as follows. In Section \ref{sec:gammabig}, we treat the case $|\gamma|\ge 1$, first formulating some density results in Sec. \ref{sec:density}, then showing the characterization of the Friedrichs extension (Lemma \ref{lem:Dcharac}) in Sec. \ref{sec:dissa}, finally showing that $\L_{\gamma,min}$ is self-adjoint (Theorem \ref{th:minsa}) in Sec. \ref{sec:minsa}. In section \ref{sec:prooffirst}, we cover the construction of a generalized boundary triple (Theorem \ref{thm:first}) and a Dirichlet-to-Neumann map (Theorem \ref{thm:DNmap}) for $\L_{\gamma,max}$ when $\gamma\in (-1,1)$, with a refined roadmap of proofs at the beginning of the section. In section \ref{sec:proofsecond}, we cover the construction of a(n ordinary) boundary triple (Theorem \ref{thm:second}) for $\L_{\gamma,max}$ when $\gamma\in (-1,1)$, with a refined roadmap of proofs at the beginning of the section. Some proofs of auxiliary lemmas are provided in Appendix \ref{eq:auxlemmas}. 


\section{Density results, proofs of Lemma \ref{lem:Dcharac} and Theorem \ref{th:minsa}} \label{sec:gammabig}

\subsection{Some preliminary density results}\label{sec:density}

We begin by exploring when $C_c^{\infty}(\Dm)$ is dense in spaces of the form $(\log x)^k x^{\alpha}C^{\infty}(\Dm)$, with respect to the $L^2_\gamma$ topology and the graph-norm topology $n_\gamma$ defined in \eqref{eq:ngamma}. 

\begin{theorem}\label{thm:densities}
    (i) $C_c^{\infty}(\Dm)$ is dense in $L^2_\gamma$ for all $\gamma\in\mathbb{R}$, and in particular it is dense in $(\log x)^k x^{\alpha}C^\infty(\Dm)$ with respect to the $L^2_\gamma$ topology for all $\alpha>(-\gamma-1)/2$ and $k\in\Nm_0$.

    (ii) $C_c^{\infty}(\Dm)$ is dense in $(\log x)^k x^{\alpha}C^\infty(\Dm)$ with respect to the graph norm $n_\gamma$ for all $\gamma\in \Rm$, $\alpha>(1-\gamma)/2$ and $k\in\Nm_0$.
\end{theorem}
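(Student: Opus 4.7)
For the first assertion, I would use the standard cutoff-then-mollify approach: given $f\in L^2_\gamma$, multiply by a smooth radial cutoff $\chi_\epsilon(\rho)$ equal to $1$ on $\{\rho<1-2\epsilon\}$ and vanishing for $\rho>1-\epsilon$, then mollify $\chi_\epsilon f$ (supported away from $\partial\Dm$) with a standard Friedrichs mollifier. The first step converges in $L^2_\gamma$ by dominated convergence, and the mollification step converges because the weight $x^\gamma$ is smooth and bounded on the support of $\chi_\epsilon f$. For the second assertion, it only remains to verify the inclusion $(\log x)^k x^\alpha C^\infty(\Dm)\subset L^2_\gamma$, which reduces to integrability of $(\log x)^{2k} x^{2\alpha+\gamma}$ near $x=0$ and holds (thanks to the strict inequality) whenever $2\alpha+\gamma>-1$.

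\textbf{Part (ii): strategy.} For density in the graph norm, I would use a cutoff adapted to $x$ instead: let $\chi_\epsilon(x)$ be smooth, vanishing for $x<\epsilon$ and equal to $1$ for $x>2\epsilon$, so that for $f=(\log x)^k x^\alpha g$ with $g\in C^\infty(\Dm)$, the product $\chi_\epsilon f$ belongs to $C_c^\infty(\Dm^{int})$. Splitting
\begin{align*}
\L_\gamma(\chi_\epsilon f) = \chi_\epsilon\,\L_\gamma f + [\L_\gamma,\chi_\epsilon] f,
\end{align*}
the proof reduces to showing (a) $\chi_\epsilon f\to f$, (b) $\chi_\epsilon\L_\gamma f\to \L_\gamma f$ (both in $L^2_\gamma$, by dominated convergence, provided $f,\L_\gamma f\in L^2_\gamma$), and (c) $[\L_\gamma,\chi_\epsilon] f\to 0$ in $L^2_\gamma$. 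For (b), a direct computation using \eqref{eq:Lgamma2} shows $|\L_\gamma f|\lesssim |\log x|^k x^{\alpha-1}$ near $\partial\Dm$, which lies in $L^2_\gamma$ precisely when $2(\alpha-1)+\gamma>-1$, i.e., the hypothesis $\alpha>(1-\gamma)/2$, which is therefore sharp.

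\textbf{Main obstacle: the commutator estimate.} The technical heart is step (c). Since $\chi_\epsilon$ is radial, only the normal terms of \eqref{eq:Lgamma2} contribute to
\begin{align*}
[\L_\gamma,\chi_\epsilon] = -x\chi_\epsilon'' - 2x\chi_\epsilon'\partial_\rho - (\rho^{-1}-(3+2\gamma)\rho)\chi_\epsilon'.
\end{align*}
On the annular support of $\chi_\epsilon'$, i.e., $\{\epsilon<x<2\epsilon\}$, one has $x\sim \epsilon$ and $|\chi_\epsilon^{(j)}|\lesssim \epsilon^{-j}$, so the coefficient of $f$ is $O(\epsilon^{-1})$ while the coefficient of $\partial_\rho f$ is $O(1)$. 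Combining with the pointwise bounds $|f|\lesssim |\log \epsilon|^k \epsilon^\alpha$ and $|\partial_\rho f|\lesssim |\log \epsilon|^k \epsilon^{\alpha-1}$ and integrating against $x^\gamma\,\d V$ over this annulus (whose $\rho$-width is $O(\epsilon)$) yields
\begin{align*}
\|[\L_\gamma,\chi_\epsilon] f\|_{L^2_\gamma}^2 \lesssim |\log \epsilon|^{2k}\,\epsilon^{2\alpha+\gamma-1},
\end{align*}
which tends to $0$ exactly under the hypothesis $\alpha>(1-\gamma)/2$. Combining (a)--(c) completes the proof.
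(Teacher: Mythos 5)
Your proof is correct and follows essentially the same route as the paper's: part (i) is the standard cut-off-and-mollify argument, and in part (ii) your commutator $[\L_\gamma,\chi_\epsilon]f$ supported on $\{\epsilon<x<2\epsilon\}$ is precisely the paper's error terms $\epsilon^{-1}\chi'(x/\epsilon)g_1+\epsilon^{-2}\chi''(x/\epsilon)g_2$, estimated by the same scaling computation giving $O(|\log\epsilon|^{2k}\epsilon^{2\alpha+\gamma-1})$ for the squared $L^2_\gamma$ norm. The only cosmetic difference is that you work from the non-divergence form \eqref{eq:Lgamma2} with pointwise bounds on the annulus, whereas the paper expands the divergence form \eqref{eq:Lgamma} and invokes a separate cutoff lemma.
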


To prove the density results, we use the following ``cutoff'' lemma:

\begin{lemma} \label{lem:cutofflemma}
    Let $\chi\in C_c^{\infty}(\Rm)$ and $g\in (\log x)^k x^{\alpha}C^{\infty}(\Dm)$ for $k\in\Nm_0$ and $\alpha\in\Rm$. Then, if $2\alpha+\gamma>-1$, or if $\chi$ vanishes to infinite order at $0$, we have
    \[\|\chi(x/\epsilon) g\|_{L^2_{\gamma}} = O(|\log\epsilon|^k\epsilon^{\alpha+(\gamma+1)/2}), \qquad \text{as } \epsilon\to 0^+.\]
\end{lemma}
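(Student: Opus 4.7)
The plan is to reduce the bound to a one-dimensional integral in $x$ and then treat the two cases by direct computation.

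First, I would use the structural assumption on $g$ to write $g = (\log x)^k x^\alpha h$ with $h\in C^\infty(\Dm)$, so that $|g|^2 \le C (\log x)^{2k} x^{2\alpha}$ on the support of $\chi(x/\epsilon)$ for $\epsilon$ small. Changing variables via $x = 1-\rho^2$, i.e.\ $\rho\, \d\rho = -\tfrac{1}{2}\d x$, the measure becomes $x^\gamma\, \d V = \tfrac{1}{2} x^\gamma\, \d x\, \d\omega$ near $\partial \Dm$. Letting $R>0$ bound the support of $\chi$, the $\omega$-integration yields
\[
\|\chi(x/\epsilon) g\|_{L^2_\gamma}^2 \;\le\; C \int_0^{R\epsilon} |\chi(x/\epsilon)|^2 (\log x)^{2k} x^{2\alpha+\gamma}\, \d x,
\]
for $\epsilon$ small enough that $R\epsilon$ lies inside the range where this representation of $g$ holds.

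Second, for the case $2\alpha+\gamma>-1$, I would just use $|\chi|\le \|\chi\|_\infty$ and change variables $x = R\epsilon u$ to obtain
\[
\int_0^{R\epsilon} (\log x)^{2k} x^{2\alpha+\gamma}\, \d x \;=\; (R\epsilon)^{2\alpha+\gamma+1} \int_0^1 |\log(R\epsilon u)|^{2k} u^{2\alpha+\gamma}\, \d u.
\]
Expanding $|\log(R\epsilon u)|^{2k} \le C_k (|\log(R\epsilon)|^{2k} + |\log u|^{2k})$ and noting that both $\int_0^1 u^{2\alpha+\gamma}\, \d u$ and $\int_0^1 |\log u|^{2k} u^{2\alpha+\gamma}\, \d u$ are finite (using $2\alpha+\gamma>-1$), the leading term is of order $|\log \epsilon|^{2k} \epsilon^{2\alpha+\gamma+1}$, yielding the stated bound after taking square roots.

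Third, for the case in which $\chi$ vanishes to infinite order at $0$, I would exploit the bound $|\chi(y)| \le C_N |y|^N$ for every $N\in\Nm_0$. Fixing $N$ large enough that $2N + 2\alpha + \gamma + 1 > 0$, the $\epsilon^{-2N}$ picked up by $|\chi(x/\epsilon)|^2 \le C_N^2 \epsilon^{-2N} x^{2N}$ is compensated by the extra $x^{2N}$ inside the integral, which, by the same change of variables as above (now with exponent $2\alpha+2N+\gamma$ in place of $2\alpha+\gamma$), gives $C|\log\epsilon|^{2k} \epsilon^{2N+2\alpha+\gamma+1}$. The net result is again $C|\log\epsilon|^{2k}\epsilon^{2\alpha+\gamma+1}$.

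This is essentially a careful bookkeeping argument, so the only genuine point of care is the logarithmic factor: the correct handling is the subadditive bound $|\log(R\epsilon u)|^{2k}\lesssim |\log\epsilon|^{2k} + |\log u|^{2k}$, since this avoids any cancellation issue and cleanly separates the $\epsilon$-dependence from the $u$-integration. No additional smoothness or interior behavior of $g$ is needed since only the worst-case boundary behavior is being controlled.
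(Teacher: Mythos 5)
Your proof is correct and follows essentially the same route as the paper's: both reduce to the one-dimensional integral $\int |\chi(x/\epsilon)|^2\, x^{2\alpha+\gamma}|\log x|^{2k}\,\d x$, rescale by $\epsilon$, and separate the logarithmic factor via subadditivity. The only cosmetic difference is that the paper keeps $\chi$ inside the rescaled integral and treats both hypotheses at once by observing that the resulting constants are finite, whereas you split into the two cases explicitly (bounding $\chi$ by its sup in the first and by $C_N|y|^N$ in the second).
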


\begin{proof}[Proof of Lemma \ref{lem:cutofflemma}]
    Denote $C_g := \max_{\Dm}|(\log x)^{-k} x^{-\alpha}g|$, finite by assumption. Then we have
    \begin{align*}
	\|\chi(x/\epsilon)g\|_{L^2_{\gamma}}^2 &= \int_{\Dm}{|\chi(x/\epsilon)|^2|g|^2\,x^{\gamma}\, \d V} \\
	&\le C_g^2 \int_{\Sm^1}\int_0^1{|\chi(x/\epsilon)|^2 x^{2\alpha+\gamma}|\log x|^{2k}\ \d x\ \d \omega} \\
	&\!\!\stackrel{x=\varepsilon y}{=} 2\pi C_g^2 \int_0^{1/\epsilon}{|\chi(y)|^2(\epsilon y)^{2\alpha+\gamma}|\log(\epsilon y)|^{2k}\,\epsilon\ \d y} \\
	&\le \sum_{i=0}^{2k}{C_i|\log\epsilon|^i\epsilon^{2\alpha+\gamma+1}},\qquad C_i = 2\pi\binom{2k}{i} C_g \int_0^{\infty}{|\chi(y)|^2y^{2\alpha+\gamma}|\log(y)|^{2k-i}\ \d y},
    \end{align*}
    where all constants are finite under the assumption that $2\alpha+\gamma>-1$ or $\chi$ vanishes at infinite order.
\end{proof}

\begin{proof}[Proof of Theorem \ref{thm:densities}]
    Proof of (i). Since the weight $x^\gamma$ does not vanish on a set of positive measure on $\Dm$, it follows that functions in $L^2_\gamma$ can be approximated by functions whose support is a compact subset of the \emph{interior} of $\Dm$. Such functions can be approximated by $C_c^\infty(\Dm)$ functions in the standard way, noting that $x^\gamma$ is strictly positive on any compact subset of the interior of $\Dm$. The second part follows since $x^{\alpha}C^\infty(\Dm)\subset L^2_\gamma$ for all $\alpha>(-\gamma-1)/2$.
    
    To prove (ii), we argue by cutting off. That is, fix $\chi\in C_c^\infty(\Rm)$ with $\chi\equiv 1$ in a neighborhood of $0$, and for $f\in x^{\alpha}C^\infty(\Dm)$, let
    \[f_{\epsilon} = (1-\chi(x/\epsilon))f.\]
    Then $f_{\epsilon}\in C_c^{\infty}(\Dm)$ for $\epsilon>0$. We have $f_{\epsilon}\to f$ in $L^2_{\gamma}$, so it suffices to check if $\L_\gamma f_{\epsilon}\to \L_\gamma f$ in $L^2_\gamma$. We have
    \[\L_\gamma f_{\epsilon} = -\rho^{-1} x^{-\gamma} \partial_\rho (\rho\ x^{\gamma+1} \partial_\rho f_{\epsilon}) - \rho^{-2} \partial_\omega^2f_{\epsilon} + (1+\gamma)^2f_{\epsilon}.\]
    The third term converges to $(1+\gamma)^2f$ in $L^2_\gamma$, while the second term equals $\rho^{-2}(1-\chi(x/\epsilon))\partial_\omega^2f$, which converges to to $\rho^{-2}\partial_\omega^2f$ in $L^2_\gamma$. It remains to check that $\|\rho^{-1} x^{-\gamma} \partial_\rho (\rho\ x^{\gamma+1} \partial_\rho [f-f_{\epsilon}])\|_{L^2_\gamma} \to 0$ as $\epsilon\to 0$. We thus write
    \begin{align}
	-\rho^{-1} x^{-\gamma} \partial_\rho (\rho x^{\gamma+1} \partial_\rho [f-f_{\epsilon}])&= -\rho^{-1}x^{-\gamma}\partial_\rho (\rho\ x^{\gamma+1}\partial_\rho(\chi(x/\epsilon)f)) \nonumber\\
	&=-\rho^{-1}x^{-\gamma}\partial_\rho (\rho\ x^{\gamma+1}\chi(x/\epsilon)\partial_{\rho} f - 2\rho^2\ x^{\gamma+1}\epsilon^{-1}\chi'(x/\epsilon) f) \nonumber\\
	&= \chi(x/\epsilon)g_0 + \epsilon^{-1}\chi'(x/\epsilon)g_1 + \epsilon^{-2}\chi''(x/\epsilon)g_2 \label{eq:sumg012}
    \end{align}
    where
    \begin{align}
	\begin{split}
	    g_0 &= -\rho^{-1}\ x^{-\gamma}\partial_\rho(\rho\ x^{\gamma+1}\partial_{\rho}f), \\
	    g_1 &= -\rho^{-1}\ x^{-\gamma}(-2\rho^2\ x^{\gamma+1})\partial_{\rho}f - \rho^{-1}\ x^{-\gamma}\partial_{\rho}(-2\rho^2\ x^{\gamma+1}f) \\
	    &= 4\rho\ x\partial_{\rho}f + (4x-4(\gamma+1)\rho^2)f, \\
	    g_2 &= -4\rho^2 x\ f.	    
	\end{split}
	\label{eq:g012}	
    \end{align}
    We now note that if $f\in (\log x)^k x^{\alpha}C^\infty(\Dm)$, then the functions $g_0, g_1, g_2$ defined in \eqref{eq:g012} satisfy $g_i\in \sum_{j=\max(0,k-i)}^k{(\log x)^j x^{\alpha-1+i}C^\infty(\Dm)}$, and hence by Lemma \ref{lem:cutofflemma} we have
    \[\|\epsilon^{-i}\chi^{(i)}(x/\epsilon)g_i\|_{L^2_\gamma} = O(|\log\epsilon|^k\epsilon^{-i+(\alpha-1+i)+(\gamma+1)/2}) = O(|\log\epsilon|^k\epsilon^{\alpha-(1-\gamma)/2})\]
    for $i=0,1,2$. It follows that if $\alpha>(1-\gamma)/2$, combining the above with \eqref{eq:sumg012}, we arrive at
    \begin{align*}
	-\rho^{-1}\ x^{-\gamma} \partial_\rho (\rho\ x^{\gamma+1} \partial_\rho [f-f_{\epsilon}]) = O_{L^2_\gamma}(|\log\epsilon|^k\epsilon^{\alpha-(1-\gamma)/2}) = o_{L^2_\gamma}(1), \qquad \text{as } \epsilon\to 0.	
    \end{align*}
\end{proof}

\subsection{Friedrichs extensions and proof of Lemma \ref{lem:Dcharac}}\label{sec:dissa}
 
Recall the following result which holds for any symmetric and positive (more generally, semibounded) operator (see, e.g., \cite[Th 4.4 p34]{Helffer2013}).
\begin{theorem}\label{th:Friedrichs}
    A symmetric, positive operator $T_0$ on ${\cal H}$ (with $D(T_0)$ dense in ${\cal H}$) admits a self-adjoint extension, called the Friedrichs extension. 
\end{theorem}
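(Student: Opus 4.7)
The plan is to construct the self-adjoint extension by completing $\dom(T_0)$ in a suitable ``energy'' inner product and invoking the Riesz representation theorem. By replacing $T_0$ with $T_0+\lambda I$ for a suitable $\lambda$ (or interpreting ``positive'' as bounded below by a positive constant), we may assume $T_0 \ge c\, I$ for some $c>0$. First, I would introduce the sesquilinear form $q(u,v) := (T_0 u, v)_\H$ on $\dom(T_0)$, which is symmetric and satisfies $q(u,u) \ge c\|u\|_\H^2$; hence $\|u\|_q^2 := q(u,u)$ defines a norm on $\dom(T_0)$ dominating the $\H$-norm.

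Next, let $\H_q$ denote the abstract completion of $(\dom(T_0), \|\cdot\|_q)$. The key technical step is to realize $\H_q$ as a dense subspace of $\H$, i.e.\ to show that the natural continuous extension of the inclusion $\dom(T_0)\hookrightarrow\H$ to $\H_q$ is injective. This reduces to the following: any $\|\cdot\|_q$-Cauchy sequence $\{u_n\}\subset \dom(T_0)$ with $u_n\to 0$ in $\H$ must also satisfy $\|u_n\|_q\to 0$. To see this, one writes
\[
\|u_n\|_q^2 = q(u_n, u_n - u_m) + (T_0 u_n, u_m)_\H,
\]
and, given $\varepsilon>0$, chooses $N$ so that the first term is $\le \varepsilon\|u_n\|_q$ for $n,m\ge N$ by the Cauchy property, while the second term tends to $0$ as $m\to\infty$ since $u_m\to 0$ in $\H$ and $T_0 u_n$ is fixed.

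With $\H_q\subset\H$ and $q$ extended to a closed, positive, symmetric form on $\H_q$, for each $f\in\H$ the map $v\mapsto (f,v)_\H$ is continuous on $(\H_q, q)$, so by Riesz representation there is a unique $u_f\in\H_q$ with $q(u_f, v) = (f,v)_\H$ for all $v\in\H_q$. I would then define $T$ by $\dom(T) := \{u_f: f\in\H\}\subset\H_q$ and $Tu_f := f$. The operator $T$ extends $T_0$ (by testing against $v\in\dom(T_0)$, where $q$ reduces to $(T_0\cdot,\cdot)_\H$) and is symmetric by the symmetry of $q$. Moreover $T\colon \dom(T)\to\H$ is a bijection with everywhere-defined, bounded, symmetric inverse $f\mapsto u_f$; hence $T^{-1}$ is self-adjoint as a bounded symmetric operator on $\H$, and consequently so is $T$.

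The main obstacle is the injectivity of the embedding $\H_q\hookrightarrow\H$, which is where the symmetry of $T_0$ enters essentially and distinguishes this construction from a generic abstract completion. All other steps are either applications of Riesz representation or the standard fact that a densely defined operator whose inverse is bounded, symmetric and defined on all of $\H$ is itself self-adjoint.
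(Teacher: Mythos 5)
Your proposal is correct and follows essentially the same route the paper takes: it recalls the standard Friedrichs construction via the energy-form completion $V$ injecting into $\H$, with the domain characterized through Riesz representation (your $\dom(T)=\{u_f\}$ coincides with the paper's $D_F$). You additionally supply the two details the paper delegates to the cited reference — the injectivity of $\H_q\hookrightarrow\H$ (closability of the form) and self-adjointness via the bounded symmetric inverse — and both arguments are sound.
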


The construction goes as follows.  To $T_0$ we associate the form $\alpha(u,v) := ( T_0u,v)_{\mathcal{H}}$, with domain $D(T_0)$. We then consider the Hilbert space completion $V$ of $D(T_0)$ with respect to the norm $u\mapsto(\alpha(u,u))^{1/2}$, viewing $V$ as injecting into $\mathcal{H}$, and we extend $\alpha$ by continuity to a form $\tilde\alpha$ with domain $V$ (the form will be continuous with respect to the norm on $V$). Finally let
\begin{align}
    D_F = \{u\in V\,:\, \exists\,C>0\ \text{ s.t. }|\tilde\alpha(u,v)|\le C\|v\|_{\mathcal{H}}\},
    \label{eq:DF}
\end{align}
i.e. $D_F$ consists of $u\in V$ where $\tilde\alpha(u,\cdot)$ is a bounded anti-linear functional on $\mathcal{H}$. For such $u$, there exists a unique element $T_Fu\in\mathcal{H}$ such that $\tilde\alpha(u,\cdot) = ( T_Fu,\cdot)_{\mathcal{H}}$; then $T_F$ with domain $D_F$ is the Friedrichs extension. Note that $D(T_0)\subset D_F\subset D(T_0^*)$, with $T_0^*|_{D_F} = T_F$ and $T_F|_{D(T_0)} = T_0$. 

We now give a criterion for when a self-adjoint extension is the Friedrichs extension, whose proof is relegated to Appendix \ref{app:Friedrichs}. Recall that given a symmetric form $\alpha$, we say that a sequence $u_n$ in the domain of $\alpha$ will \emph{$\alpha$-converge} to $u\in\mathcal{H}$ (not necessarily in the domain of $\alpha$) if
\[u_n\xrightarrow{n\to\infty} u\text{ in }\H\quad\text{and}\quad\alpha(u_n-u_m,u_n-u_m)\xrightarrow{m,n\to\infty} 0.\]
\begin{lemma}[Characterization of the Friedrichs extension]
\label{lem:friedrichs-lemma}
Let $T_0$ be a symmetric semibounded operator, and let $T$ be a self-adjoint extension of $T_0$. Then $T$ is the Friedrichs extension of $T_0$ if and only if $D(T)$ is contained in the $\alpha$-closure of $D(T_0)$, that is,
\begin{align*}
    \text{for all }u\in D(T)\text{, there exists a sequence }u_n\in D(T_0)\text{ such that }u_n\to_{\alpha}u.
\end{align*}
\end{lemma}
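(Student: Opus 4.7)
The plan is to recast the criterion as the condition $D(T)\subset V$, where $V$ is the $\alpha$-completion of $D(T_0)$ viewed continuously inside $\H$, and then to check both implications via the construction recalled around \eqref{eq:DF}. A preliminary observation is that the $\alpha$-closure of $D(T_0)$ inside $\H$ coincides with $V$ viewed as a subspace of $\H$: any $\alpha$-Cauchy sequence is Cauchy in the $V$-norm and thus has a $V$-limit, which must agree with its $\H$-limit because $V\hookrightarrow \H$ is continuous; conversely, any $u\in V$ admits a $V$-approximating sequence from $D(T_0)$ which is automatically $\alpha$-convergent. The forward implication is then immediate: if $T=T_F$, the inclusion $D(T_F)\subset V$ is built directly into definition \eqref{eq:DF}.

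For the reverse implication, assume $D(T)\subset V$. The plan is to show $T\subset T_F$; since both operators are self-adjoint, the maximality of self-adjoint operators will then force $T=T_F$. Fix $u\in D(T)$ and pick $u_n\in D(T_0)$ with $u_n\to_\alpha u$. For any $v\in D(T_0)$, the symmetry of $T_0$ yields $\alpha(u_n,v)=(T_0 u_n,v)_\H=(u_n,T_0 v)_\H$, and passing to the limit using $u_n\to u$ in $\H$ together with the $V$-continuity of $\tilde\alpha(\cdot,v)$, I obtain $\tilde\alpha(u,v)=(u,T_0 v)_\H=(u,Tv)_\H=(Tu,v)_\H$, where I used that $T_0\subset T$ and that $T$ is self-adjoint. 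Thus the antilinear form $\tilde\alpha(u,\cdot)-(Tu,\cdot)_\H$ vanishes on $D(T_0)$, which is $V$-dense, and by $V$-continuity it vanishes on all of $V$. In particular $\tilde\alpha(u,\cdot)$ extends to $\H$ as the bounded functional $(Tu,\cdot)_\H$, so $u\in D_F$ and $T_F u=Tu$ by the defining property of $T_F$.

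The only delicate step is the identification $\tilde\alpha(u,v)=(Tu,v)_\H$ for $v\in D(T_0)$: the crucial move is to exploit the symmetry of $T_0$ to transfer $T_0$ from the approximating sequence $u_n$ (for which $T_0 u_n$ has no reason to converge in any strong sense) onto the fixed test element $v$, after which passing to the $\H$-limit in $u_n$ is immediate. The remaining pieces, namely extending the identity from $D(T_0)$ to $V$ by density in the $V$-topology and upgrading $T\subset T_F$ to equality via the standard maximality of self-adjoint operators, are routine.
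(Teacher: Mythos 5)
Your proposal is correct and follows essentially the same route as the paper's proof: the forward direction read off from the definition of $D_F\subset V$, and the reverse direction established by proving $\tilde\alpha(u,v)=(Tu,v)_{\H}$ first for $v\in D(T_0)$ via the symmetry of $T_0$ (moving $T_0$ onto the test vector $v$), then extending to all of $V$ by density and continuity, and concluding with the maximality of self-adjoint operators. The only cosmetic difference is that the paper first normalizes $T_0\ge 1$ so that the form norm controls the $\H$-norm, a point your argument uses implicitly when identifying the $\alpha$-closure with $V$ inside $\H$.
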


\begin{proof}[Proof of Lemma \ref{lem:Dcharac}]
    We aim to apply Lemma \ref{lem:friedrichs-lemma}, working with the quadratic form $\alpha_\gamma$ defined in \eqref{eq:alphagamma}. The proof amounts to showing that the $\alpha_\gamma$-closure of $C_c^\infty(\Dm)$ contains (i) $C^\infty(\Dm)$ if $\gamma\ge 0$ and (ii) $x^{-\gamma}C^\infty(\Dm)$ if $\gamma<0$. The domain of the $\alpha_\gamma$-closure is also closed with respect to the graph norm, hence it contains the graph norm closure of $C^\infty(\Dm)$ (resp. $x^{-\gamma}C^\infty(\Dm)$) for $\gamma\ge 0$ (resp. $\gamma<0$).

We briefly explain why (i) implies (ii). For $\gamma<0$ and $f\in x^{-\gamma}C^\infty(\Dm)$, assuming (i) is true, $x^\gamma f$ can be approximated in the $\alpha_{-\gamma}$ norm by a sequence $f_n$ in $C_c^\infty(\Dm)$. Via the intertwining property \eqref{eq:interxgamma}, we then see that $\alpha_{-\gamma}(f_n- x^\gamma f) = \alpha_\gamma (x^{-\gamma} f_n- f)\to 0$, where $x^{-\gamma} f_n\in C_c^\infty(\Dm)$. Thus $x^{-\gamma}C^\infty(\Dm)$ is contained in the $\alpha_\gamma$-closure of $C_c^\infty(\Dm)$, and (ii) holds. 

On to the proof of (i), we recall, for $\gamma\ge 0$, that the closure of $(\L_\gamma, C^\infty(\Dm))$ is a self-adjoint operator with eigenbasis \response{ $\{G^\gamma_{n,k}\}_{n\ge 0,\ 0\le k\le n}$ as defined in \eqref{eq:Gnkgamma},} satisfying
\[\L_\gamma G^\gamma_{n,k} = (n+1+\gamma)^2 G^\gamma_{n,k}, \qquad G_{n,k}^\gamma|_{\Sm^1} (e^{i\omega}) = e^{i(n-2k)\omega}, \]
\response{where the second property uses \cite[Eq. (2.10)]{Wuensche2005}.} The domain equals $\wtH_D^{2,\gamma}$ as defined in \eqref{eq:DSob}, also characterized as 
\begin{align*}
    \wtH_D^{s,\gamma} = \left\{f = \sum_{n,k}{f_{n,k}\ \widehat{G_{n,k}^{\gamma}}}\,:\,\sum_{n,k}{(n+\gamma+1)^{2s}|f_{n,k}|^2}<\infty\right\}    
\end{align*}
where $\widehat{G_{n,k}^{\gamma}} = G_{n,k}^{\gamma}/ \|G_{n,k}^{\gamma}\|_{L^2_\gamma}$. In light of Lemma \ref{lem:friedrichs-lemma}, it suffices to show: for each $f\in \wtH_D^{2,\gamma}$, there exists $f^{(j)}$ in the $\alpha_\gamma$-closure of $C_c^\infty(\Dm)$, such that
\[f^{(j)}\xrightarrow{j\to\infty}_{\alpha_\gamma} f.\]
Noting that every $f$ can be approximated in $\wtH_D^{2,\gamma}$ by a finite linear combination of the Zernike polynomials $G_{n,k}^{\gamma}$, and that a sequence convergent in $\wtH_D^{2,\gamma}$ will also $\alpha_\gamma$-converge, it suffices to show the above approximation when $f = G_{n,k}^{\gamma}$.

We note, in light of Theorem \ref{thm:densities}.(ii), that $xC^\infty(\Dm) \subset \overline{C_c^\infty(\Dm)}^{n_\gamma}$ for all $\gamma>-1$ (in particular for $\gamma\ge 0$). In particular, $xC^\infty(\Dm)$ is contained in the $\alpha_\gamma$-closure of $C_c^\infty(\Dm)$. Thus, we now aim to show: if $f = G_{n,k}^\gamma$ for some $n,k$, then there exists a sequence $f^{(j)}\in xC^\infty(\Dm)$ such that
\[f^{(j)}\xrightarrow{j\to\infty}_{\alpha_\gamma} f.\]
Note that $f\in C^\infty(\Dm)$ belongs to $xC^\infty(\Dm)$ if and only if $f|_{\Sm^1} = 0$. Furthermore, for a polynomial written in the form $\sum_{n,k}{f_{n,k}G_{n,k}^{\gamma}}$, we have
\begin{align*}
    \sum_{n,k}{f_{n,k}G_{n,k}^{\gamma}}\big|_{\Sm^1} (e^{i\omega}) = \sum_{m\in\mathbb{Z}}{\left(\sum_{n-2k=m}{f_{n,k}}\right)e^{im\omega}}.
\end{align*}
As such, for a fixed choice of $(n,k)$, let $m=n-2k$, and choose an ansatz
\begin{align}
    f^{(j)} = G_{n,k}^{\gamma} - \sum_{n'-2k'=m}{f_{n',k'}^{(j)}G_{n',k'}^{\gamma}}
    \label{eq:ansatz}
\end{align}
where, for each $j$, only finitely many coefficients $f_{n',k'}^{(j)}$ are nonzero so that $f^{(j)}\in C^{\infty}(\Dm)$, and we have
\begin{align}
    \sum_{n'-2k'=m}{f_{n',k'}^{(j)}} = 1
    \label{eq:ansatz2}
\end{align}
so that $\tau_{\gamma}^Nf^{(j)} = 0$, i.e. $f^{(j)}\in xC^\infty(\Dm)$. We aim to choose the coefficients so that
\[\|f^{(j)}-G_{n,k}^{\gamma}\|_{\wtH_D^{1,\gamma}}\xrightarrow{j\to\infty} 0.\]
Note that
\begin{align*} 
    \|f^{(j)}-G_{n,k}^{\gamma}\|_{\wtH_D^{1,\gamma}}= \left\|\sum_{n'-2k'=m}{f_{n',k'}^{(j)}G_{n',k'}^{\gamma}}\right\|_{\wtH_D^{1,\gamma}}^2 = \sum_{n'-2k'=m}{(n'+1+\gamma)^2\|G_{n',k'}^{\gamma}\|_{L^2_\gamma}^2|f_{n',k'}^{(j)}|^2}.
\end{align*}
We now use the following elementary \response{lemma, whose proof appears in Appendix \ref{sec:RAlemmas}:
\begin{lemma}\label{real-analysis-lemma}
    Suppose $\{a_k\}_{k=1}^\infty$ is a sequence of positive numbers satisfying $\sum_{k=1}^\infty{1/a_k}=\infty$. Then there exists a sequence of sequences $\{\{c_k^{(j)}\}_{k=1}^\infty\}_{j=1}^\infty$, with only finitely many numbers $c_k^{(j)}$ nonzero for any fixed $j$, satisfying
    \[\sum_{k=1}^\infty{c_k^{(j)}} = 1, \qquad\text{and}\quad \lim_{j\to\infty}\sum_{k=1}^\infty{a_k|c_k^{(j)}|^2} = 0.\]
\end{lemma}}
Thus, we aim to find $f_{n',k'}^{(j)}$ satisfying \eqref{eq:ansatz2} and such that 
\[\sum_{n'-2k'=m}{(n'+1+\gamma)^2\|G_{n',k'}^{\gamma}\|_{L^2_\gamma}^2|f_{n',k'}^{(j)}|^2}\xrightarrow{j\to\infty} 0.\]
By \response{Lemma \ref{real-analysis-lemma}}, applied to $a_{k'} = (n'+1+\gamma)^2\|G_{n',k'}^{\gamma}\|_{L^2_\gamma}^2$ and $c_{k'}^{(j)} = f_{n',k'}^{(j)}$ (where $n'-2k'=m$), we see that this is possible, provided that
\[\sum_{n'-2k'=m}{\frac{1}{(n'+1+\gamma)^2\|G_{n',k'}^{\gamma}\|_{L^2_\gamma}^2}} = \infty.\]
Note that, for fixed $m$, we can parametrize the terms in the above sum by $k'$, with $k'\in\mathbb{N}$, $k'\ge\max(0,-m)$, with
\[n' = 2k'+m\implies n'-k' = k'+m.\]
Thus
\begin{align*}
    (n'+1+\gamma)^2\|G_{n',k'}^{\gamma}\|_{L^2_\gamma}^2 &= (n'+1+\gamma)^2\frac{\pi (n'-k')!(k')!(\gamma!)^2}{(n'+\gamma+1)(n'-k'+\gamma)!(k'+\gamma)!} \\
    &=(2k'+m+1+\gamma)^2\frac{\pi (k'+m)!(k')!(\gamma!)^2}{(2k'+m+\gamma+1)(k'+m+\gamma)!(k'+\gamma)!},
\end{align*}
and the latter is asymptotic to $2\pi(\gamma!)^2(k')^{1-2\gamma}$ as $k'\to \infty$. It follows that
\[\sum_{n'-2k'=m}{\frac{1}{(n'+1+\gamma)^2\|G_{n',k'}^{\gamma}\|_{L^2_\gamma}^2}} = \infty\]
as long as $1-2\gamma\le 1$, or equivalently, $\gamma\ge 0$.
\end{proof}

Inspecting the proof above, we can in fact prove the following result, which will be useful in later sections.  
\begin{corollary}\label{cor:H1dense}
    For $\gamma\in (-1,1)$, $C_c^\infty(\Dm^{int})$ is dense in $\A_{\gamma,D}$ for the $\wtH^{1,\gamma}$ topology. 
\end{corollary}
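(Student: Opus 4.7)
The plan is to decompose $\A_{\gamma,D}$ into two natural summands and approximate each by $C_c^\infty(\Dm^{int})$ in the $\wtH^{1,\gamma}$ norm, combining Theorem \ref{thm:densities}(ii) for the ``singular'' summand with the Friedrichs-characterization argument already carried out in the proof of Lemma \ref{lem:Dcharac} for the ``regular'' summand. Inspecting the formulas \eqref{eq:tracescomp} for $\dir$ case by case, one verifies that $\A_{\gamma,D} = xC^\infty(\Dm) + x^{-\gamma}C^\infty(\Dm)$ when $\gamma\in(-1,0)$, that $\A_{0,D} = C^\infty(\Dm) + x\log x\cdot C^\infty(\Dm)$ when $\gamma=0$, and that $\A_{\gamma,D} = C^\infty(\Dm) + x^{1-\gamma}C^\infty(\Dm)$ when $\gamma\in(0,1)$, with each summand itself lying in $\A_{\gamma,D}$.

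The first step is a norm comparison on $\A_{\gamma,D}$: combining Lemma \ref{lem:tgammab}(2) with Green's first identity \eqref{eq:G1} (whose boundary pairing drops since $\dir g=0$ for $g\in\A_{\gamma,D}$) yields
\[
\|f\|_{\wtH^{1,\gamma}}^2 = \alpha_\gamma(f) = \dprod{\L_\gamma f}{f}_{L^2_\gamma} \le \tfrac{1}{2}\,\ngamma(f)^2, \qquad f\in\A_{\gamma,D},
\]
so that convergence in the graph norm $\ngamma$ within $\A_{\gamma,D}$ automatically implies convergence in $\wtH^{1,\gamma}$.

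For the ``singular'' summand in each case---of the form $(\log x)^k x^\alpha C^\infty(\Dm)$ with $(k,\alpha)\in\{(0,1),(1,1),(0,1-\gamma)\}$, all satisfying the threshold $\alpha>(1-\gamma)/2$---Theorem \ref{thm:densities}(ii) provides $\ngamma$-density of $C_c^\infty(\Dm^{int})$, which upgrades to $\wtH^{1,\gamma}$-density via the comparison above. For the ``regular'' summand, the approximation already proved in Lemma \ref{lem:Dcharac} applies directly: for $\gamma\in[0,1)$, that argument establishes $\wtH_D^{2,\gamma}\subset\overline{C_c^\infty(\Dm^{int})}^{\alpha_\gamma}$, and $C^\infty(\Dm)\subset\wtH_D^{2,\gamma}$; for $\gamma\in(-1,0)$, a given $f=x^{-\gamma}g$ with $g\in C^\infty(\Dm)$ is handled by applying the $\gamma\in(0,1)$ case to $g$ at parameter $-\gamma$ to obtain $g_j\in C_c^\infty(\Dm^{int})$ with $\alpha_{-\gamma}(g-g_j)\to 0$, and the intertwining \eqref{eq:interxgamma} gives $\alpha_\gamma(x^{-\gamma}h)=\alpha_{-\gamma}(h)$, so that $f_j := x^{-\gamma}g_j \in C_c^\infty(\Dm^{int})$ satisfies $\alpha_\gamma(f-f_j)\to 0$. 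Summing the two approximations then gives the conclusion for arbitrary $f\in\A_{\gamma,D}$.

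The main subtlety to highlight is the $\gamma=0$ case of the regular summand: an $f\in C^\infty(\Dm)$ with $f|_{\Sm^1}\ne 0$ still lies in $\A_{0,D}$ (its $\log x$-coefficient vanishes trivially), yet a naive cutoff $(1-\chi(x/\epsilon))f$ produces an error in the derivative term $\sqrt{x}\,\phi_0\,\partial_\rho(\cdot/\phi_0)$ whose squared $L^2$-norm is $\gtrsim \epsilon^{-2}\int_{a\epsilon}^{b\epsilon}x\,dx \asymp 1$, which does not vanish as $\epsilon\to 0$. This is precisely why Lemma \ref{lem:Dcharac}'s Zernike-polynomial approximation must be invoked here, rather than any direct cutoff scheme; its summability criterion $\sum_{n'-2k'=m} 1/[(n'+\gamma+1)^2\|G_{n',k'}^\gamma\|^2_{L^2_\gamma}]=\infty$ requires $\gamma\ge 0$ and holds only borderline at $\gamma=0$.
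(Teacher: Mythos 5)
Your proof is correct and follows essentially the same route as the paper's: split $\A_{\gamma,D}$ into a summand of the form $(\log x)^k x^\alpha C^\infty(\Dm)$ handled by Theorem \ref{thm:densities}(ii) together with the comparison $\alpha_\gamma(f)\lesssim \ngamma(f)^2$, and a summand already shown to lie in the $\alpha_\gamma$-closure of $C_c^\infty(\Dm^{int})$ in the proof of Lemma \ref{lem:Dcharac} (via the Zernike ansatz and, for $\gamma<0$, the intertwining \eqref{eq:interxgamma}). Your explicit treatment of the $\gamma=0$ and $\gamma\in(0,1)$ cases, and the remark on why a naive cutoff fails at $\gamma=0$, simply flesh out what the paper dismisses as ``similar.''
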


\begin{proof} Recall that the quadratic forms $\alpha_\gamma$ and $\dprod{\cdot}{\cdot}_{\wtH^{1,\gamma}}$ agree on $C_c^\infty(\Dm)$. For $\gamma\in (-1,0)$, $\A_{\gamma,D} = xC^\infty(\Dm) + x^{-\gamma} C^\infty(\Dm)$, where $x^{-\gamma} C^\infty(\Dm)$ is included in $\overline{C_c^\infty(\Dm^{int})}^{\alpha_\gamma}$ by the previous proof, and $xC^\infty(\Dm)$ is included in $\overline{C_c^\infty(\Dm^{int})}^{n_\gamma}$ by Theorem \ref{thm:densities}.(ii), which itself is included in $\overline{C_c^\infty(\Dm^{int})}^{\alpha_\gamma}$ (this is because, by Cauchy-Schwarz inequality, we immediately have $\alpha_\gamma(f) \le n_\gamma(f)^2$). The proofs for the cases $\gamma=0$ and $\gamma\in (0,1)$ are similar. 
\end{proof}

\subsection{Proof of Theorem \ref{th:minsa}} \label{sec:minsa}

We are now ready to prove Theorem \ref{th:minsa}, namely that $\L_{\gamma,min}$ is self-adjoint for $|\gamma|\ge 1$, by showing that $\L_{\gamma,min} = \L_{\gamma,D}$. This amounts to proving that (i) for $\gamma\ge 1$, $\overline{C_c^\infty(\Dm)}^{n_\gamma} = \overline{C^\infty(\Dm)}^{n_\gamma}$, and (ii) for $\gamma \le -1$, $\overline{C_c^\infty(\Dm)}^{n_\gamma} = \overline{x^{-\gamma}C^\infty(\Dm)}^{n_\gamma}$. 

Assuming (i) holds, (ii) can be proved as follows. Fix $\gamma\le -1$ and $f\in x^{-\gamma}C^\infty(\Dm)$. By (i), $x^{\gamma}f \in C^\infty(\Dm)$ can be approximated in $n_{-\gamma}$ by a sequence $f_n\in C_c^\infty(\Dm^{int})$, and then, using the intertwining \eqref{eq:interxgamma}, $\|x^{-\gamma}f_n - f\|_{n_\gamma} = \|f_n-x^\gamma f\|_{n_{-\gamma}}\to 0$ as $n\to \infty$, i.e. the sequence $x^{-\gamma}f_n \in C_c^\infty(\Dm^{int})$ approximates $f$ in the $n_\gamma$ topology. 

We now prove (i), fixing $\gamma\ge 1$. If $\gamma>1$, then the conclusion follows from Theorem \ref{thm:densities}.(ii). The limiting case $\gamma=1$ requires special care, and the proof that follows works for all $\gamma\ge 1$ {\it a fortiori}. By Theorem \ref{thm:densities}.(ii), we have that $x C^\infty(\Dm) \subset \overline{C_c^\infty(\Dm^{int})}^{n_\gamma}$, and hence it remains to show that $C^\infty(\Dm) \subset \overline{xC^\infty(\Dm)}^{n_\gamma}$. Since each $f\in C^\infty(\Dm)$ can be written as an $n_\gamma$-convergent sum of Zernike polynomials, it suffices to show that every Zernike polynomial $G_{n,k}^\gamma$ can be $n_\gamma$-approximated by elements of $xC^\infty(\Dm)$. Thus fixing $(n,k)$ and setting $m=n-2k$, we aim to approximate $G_{n,k}^\gamma$ with a sequence $\{f^{(j)}\}_j$ in $xC^\infty (\Dm)$ of the form \eqref{eq:ansatz}, whose coefficients are chosen to satisfy \eqref{eq:ansatz2} to enforce $f^{(j)}|_{\Sm^1} =0$, and so that 
\[\|f^{(j)}-G_{n,k}^{\gamma}\|_{\wtH_D^{2,\gamma}} = \left\|\sum_{n'-2k'=m}{f_{n',k'}^{(j)}G_{n',k'}^{\gamma}}\right\|_{\wtH_D^{2,\gamma}}\xrightarrow{j\to\infty} 0.\]
In this case, we have
\[
    \left\|\sum_{n'-2k'=m}{f_{n',k'}^{(j)}G_{n',k'}^{\gamma}}\right\|_{\wtH_D^{2,\gamma}}^2 =\sum_{n'-2k'=m}{(n'+1+\gamma)^4\|G_{n',k'}^{\gamma}\|_{L^2_\gamma}^2|f_{n',k'}^{(j)}|^2}.
\]
By \response{Lemma \ref{real-analysis-lemma}}, applied to $a_{k'} = (n'+1+\gamma)^4\|G_{n',k'}^{\gamma}\|_{L^2_\gamma}^2$ and $c_{k'}^{(j)} = f_{n',k'}^{(j)}$ (where $n'-2k'=m$), we can find our desired sequences, provided that
\[\sum_{n'-2k'=m}{\frac{1}{(n'+1+\gamma)^4\|G_{n',k'}^{\gamma}\|_{L^2_\gamma}^2}} = \infty.\]
Note that, for fixed $m$, we can parametrize the terms in the above sum by $k'$, with $k'\in\mathbb{N}$, $k'\ge\max(0,-m)$, with
\[n' = 2k'+m\implies n'-k' = k'+m.\]
Thus
\begin{align*}
    (n'+1+\gamma)^4\|G_{n',k'}^{\gamma}\|_{L^2_\gamma}^2 &= (n'+1+\gamma)^4\frac{\pi (n'-k')!(k')!(\gamma!)^2}{(n'+\gamma+1)(n'-k'+\gamma)!(k'+\gamma)!} \\
    &=(2k'+m+1+\gamma)^4\frac{\pi (k'+m)!(k')!(\gamma!)^2}{(2k'+m+\gamma+1)(k'+m+\gamma)!(k'+\gamma)!},
\end{align*}
which is asymptotic to $2\pi(\gamma!)^2(k')^{3-2\gamma}$ as $k'\to \infty$. It follows that
\[\sum_{n'-2k'=m}{\frac{1}{(n'+1+\gamma)^4\|G_{n',k'}^{\gamma}\|_{L^2_\gamma}^2}} = \infty\]
as long as $3-2\gamma\le 1$, i.e. $\gamma\ge 1$. Thus, we can construct $f^{(j)}\in xC^\infty(\Dm)\subset\text{dom }(\L_{\gamma,min})$ approximating any $G_{n,k}^\gamma$ in the graph norm, and this concludes the proof of the case $\gamma\ge 1$. The proof of Theorem \ref{th:minsa} is complete. 

\section{Proof of Lemma \ref{lem:tgammab}, Theorem \ref{thm:first} and \ref{thm:DNmap}} \label{sec:prooffirst}

This section aims at proving the first main theorem, Theorem \ref{thm:first}. This is done through the following roadmap. 

In Section \ref{sec:tgammab}, we first prove Lemma \ref{lem:tgammab}, which consists in making sense of the appropriate $\wtH^{1,\gamma}$ inner product for which Green's identities hold for functions in $\A_\gamma$. Section \ref{sec:Dext} then covers the surjective extension of $\tau_\gamma^D$ to $\wtH^{1,\gamma}$, as stated in Theorem \ref{thm:qbt1} below. This requires elementary one-dimensional traces estimates as derived in Section \ref{sec:1Dtraces}, combined with Fourier expansions in the boundary coordinate to complete the proof of Theorem \ref{thm:qbt1} in Section \ref{sec:qbt1proof}. In section \ref{sec:prooffirstthm}, we first prove preliminary properties in Lemma \ref{lem:H1spaces}: characterizations of the space $\wtH^{1,\gamma}_0(\Dm)$, Green's first identity without boundary term, and a weak setting where $\L_\gamma$ is bounded. Moving to the proof of Theorem \ref{thm:first}, it remains to show the extension of the Neumann trace to $W_\gamma^2$ by duality arguments, and the extension of Green's identities by density arguments. Finally, we prove Theorem \ref{thm:DNmap} in Section \ref{sec:DNmap}.

\subsection{Proof of Lemma \ref{lem:tgammab}} \label{sec:tgammab}

\paragraph{Case $-1<\gamma<0$.}

This case is the 'simplest' in that the $\wtH^{1,\gamma}$ space does not require the use of an auxiliary function $\phi_\gamma$, or equivalently, $\phi_\gamma = 1$ can be used, definition \eqref{eq:H1tilde} is equivalent to 
\begin{align}
    \A_\gamma\ni f,g\mapsto \dprod{f}{g}_{\wtH^{1,\gamma}} := \dprod{\sqrt{x}\ \partial_\rho f}{\sqrt{x}\ \partial_\rho g}_{L^2_\gamma} + \dprod{\rho^{-1} \partial_\omega f}{\rho^{-1} \partial_\omega g}_{L^2_\gamma} + (1+\gamma)^2 \dprod{f}{g}_{L^2_\gamma},
    \label{eq:H1tildem10}
\end{align}
hence the independence on $b$. Note that $\dir, \neu \colon \A_\gamma \to C^\infty(\Sm^1)$ defined in \eqref{eq:traces} take the simplified form
\begin{align*}
    \dir f = f|_{x=0}, \qquad \neu f = \lim_{x\to 0} (-x^{\gamma+1} \partial_\rho f), \qquad f\in \A_\gamma.
\end{align*}
Then \eqref{eq:G1} follows by sending $R\to 1$ in \eqref{eq:preG1}. 

\paragraph{Case $0\le \gamma<1$.} We first briefly explain what further treatment is required, explaining what happens for $\gamma\in (0,1)$, though the case $\gamma=0$ is similar. One may think about defining a space $\wtH^{1,\gamma}$ to be the completion of $C^\infty(\Dm)$ for the norm coming from the inner product \eqref{eq:H1tildem10}, though one may find that such a space does not have a good trace in the sense that, e.g. as a direct application of Corollary \ref{cor:H1dense}, for any $\gamma\in (0,1)$, $C_c^\infty(\Dm)$ is dense in $C^\infty(\Dm)$ for the norm coming from the inner product \eqref{eq:H1tildem10}. Since the indicial root $r=-\gamma$ is the most singular, one may think of taking the completion of $\A_\gamma$ with respect to the same norm, though another issue arises: the norm \eqref{eq:H1tildem10} of elements in $x^{-\gamma} C^{\infty}$ is in general not finite. In Sturm-Liouville theory, this situation is characteristic of the case of singular endpoint in the limit circle case, for which a systematic remedy in 1D can be found in, e.g., \cite[Sec. 6.9]{Behrndt2020}. We adapt these ideas to the present two-dimensional context.

We first make the function $\phi_\gamma$ introduced in Section \ref{sec:gammale1} explicit. For $x\in (0,1)$, we define
\begin{align}
    \phi_\gamma(x) := \left\{
    \begin{array}{cc}
	(-\gamma) \sum_{k=0}^\infty \frac{x^{k-\gamma}}{k-\gamma}, & \gamma\in (0,1), \\
	\log x- \log (1-x) - c_0, & \gamma=0,
    \end{array}
    \right. 
    \label{eq:phigamma}
\end{align}
where $c_0>\log(e^4-1)$ is a fixed constant. For any $\gamma\in [0,1)$, $\phi_\gamma$ is a radial solution of $\L_\gamma \phi_\gamma = (\gamma+1)^2 \phi_\gamma$ (this simplifies into $\partial_\rho (x^{\gamma+1}\rho\partial_\rho \phi_\gamma) = 0$), satisfying the relevant limits \eqref{eq:phigammaprops}. Moreover, there exists $x_\gamma>0$ such that $\phi_\gamma$ is nonzero on $(0,x_\gamma)$. If $x\in (0,1)$, $x_\gamma$ solves the implicit equation $\gamma \sum_{k=1}^\infty \frac{x^k}{k-\gamma} = 1$, and if $\gamma=0$, $x_0 = 1/(1+e^{-c_0})$. Set $b_\gamma := \sqrt{1-x_\gamma}$. 

Let us fix $\gamma\in [0,1)$, and write $\phi=\phi_\gamma$ for conciseness. Let us denote\footnote{The definition differs philosophically from \cite[Eq. (6.9.2) p443]{Behrndt2020} in a couple of ways: the $\sqrt{p}$ is not factored in; the derivative is the $\partial_\rho$ specific to our case.} 
\begin{align}
    N_\phi f := \phi\ \partial_\rho \left( \frac{f}{\phi} \right) = \partial_\rho f - \frac{\partial_\rho \phi}{\phi} f.
    \label{eq:Nphi}
\end{align}
Given $b\in (b_\gamma,1)$, let us define the bilinear form as in \eqref{eq:H1tilde} 
\begin{align}
    \begin{split}
	\ft_{\gamma,b}[f,g] &= \dprod{\sqrt{x} N_\phi f}{\sqrt{x} N_\phi g}_{x^\gamma, A_{b,1}} - b (x(b))^{\gamma+1} \frac{\partial_\rho \phi}{\phi}(b) \int_{C_b} f\bg  + \dprod{\sqrt{x} \partial_\rho f}{\sqrt{x} \partial_\rho g}_{x^\gamma, \Dm_{b}} \\	
	&\qquad + \dprod{\rho^{-1} \partial_\omega f}{\rho^{-1} \partial_\omega g}_{L^2_\gamma} + (\gamma+1)^2 \dprod{f}{g}_{L^2_\gamma},
    \end{split}    
    \label{eq:regform}
\end{align}
where for $0\le a<c\le 0$, we denote $A_{a,c}$ for the annulus $\{a<\rho<c\}$.

\begin{proof}[Proof of Lemma \ref{lem:tgammab}]
    We first derive some identities. We compute
    \begin{align*}
	\L_{\gamma} f\ \rho x^\gamma \bg = - \partial_\rho (x^{\gamma+1} \rho \partial_\rho f) \bg - \bg \rho^{-2}\partial_\omega ^2 f \rho x^\gamma + (\gamma+1)^2 f \bg \rho x^\gamma.
    \end{align*}
    The first term in the right-hand side is rewritten as
    \begin{align*}
	- \partial_\rho (x^{\gamma+1} \rho \partial_\rho f) \bg &= - \partial_\rho (x^{\gamma+1} \rho \partial_\rho (\phi f/\phi)) \bg \\
	&= - \partial_\rho (x^{\gamma+1} \rho (\partial_\rho \phi) f/\phi)) \bg - \bg \partial_\rho \left( x^{\gamma+1} \rho N_\phi f \right)\\ 
	&= - \cancel{\partial_\rho (x^{\gamma+1} \rho \partial_\rho \phi)} (f/\phi) \bg - x^{\gamma+1} \rho \frac{\partial_\rho \phi}{\phi} (N_\phi f) \bg - \bg \partial_\rho \left( x^{\gamma+1} \rho N_\phi f \right) \\
	&= - x^{\gamma+1} \rho \frac{\partial_\rho \phi}{\phi} (N_\phi f) \bg + \partial_\rho \bg x^{\gamma+1} \rho N_\phi f - \partial_\rho \left( \bg x^{\gamma+1} \rho N_\phi f \right)  \\ 
	&= (N_\phi \bg) x^{\gamma+1} \rho (N_\phi f)- \partial_\rho \left( \bg x^{\gamma+1} \rho N_\phi f \right). 
    \end{align*}
    Hence on an annulus $A_{a,b} = \{a<\rho<b\}$, we obtain the integration by parts formula
    \begin{align}
	\begin{split}
	    \dprod{\L_\gamma f}{g}_{x^\gamma, A_{a,b}} &= \dprod{\sqrt{x} N_\phi f}{\sqrt{x} N_\phi g}_{x^\gamma, A_{a,b}} + \dprod{\rho^{-1} \partial_\omega f}{\rho^{-1} \partial_\omega g}_{x^\gamma, A_{a,b}} \dots\\
	    & \qquad- \left[ x(\rho)^{\gamma+1} \rho \int_{C_\rho} \bg N_\phi f\right]_{\rho=a}^{\rho=b} + (\gamma+1)^2 \dprod{f}{g}_{x^\gamma, A_{a,b}}. 
	\end{split}
	\label{eq:IBPphi0}    
    \end{align}
    We now rewrite the $N_\phi f N_\phi \bg$ term: 
    \begin{align*}
	(N_\phi f) (N_{\phi} \bg) \rho x^{\gamma+1} &= \rho x^{\gamma+1} \left(\partial_\rho f - f \frac{\partial_\rho \phi}{\phi}\right)  \left(\partial_\rho \bg - \bg \frac{\partial_\rho \phi}{\phi}\right) \\
	&= \rho x^{\gamma+1} \partial_\rho f \partial_\rho \bg - \rho x^{\gamma+1} \partial_\rho (f \bg) \frac{\partial_\rho \phi}{\phi} + \rho x^{\gamma+1} f \bg \left( \frac{\partial_\rho \phi}{\phi} \right)^2 \\
	&= \rho x^{\gamma+1} \partial_\rho f \partial_\rho \bg -  \partial_\rho \left(\rho x^{\gamma+1} f \bg\frac{\partial_\rho \phi}{\phi}\right) + f \bg \partial_\rho \left( \rho x^{\gamma+1} \frac{\partial_\rho \phi}{\phi} \right)  + \rho x^{\gamma+1} f \bg \left( \frac{\partial_\rho \phi}{\phi} \right)^2 \\
	&\stackrel{\eqref{eq:phigammaprops}}{=} \rho x^{\gamma+1} \partial_\rho f \partial_\rho \bg -  \partial_\rho \left(\rho x^{\gamma+1} f \bg\frac{\partial_\rho \phi}{\phi}\right),
    \end{align*}
    and thus
    \begin{align}
	\dprod{\sqrt{x} N_\phi f}{\sqrt{x} N_\phi g}_{x^\gamma, A_{a,b}} &= \dprod{\sqrt{x} \partial_\rho f}{\sqrt{x} \partial_\rho g}_{x^\gamma, A_{a,b}} - \left[ \rho x^{\gamma+1} \frac{\partial_\rho \phi}{\phi} \int_{C_\rho} f \bg\right]_{\rho=a}^{\rho=b}.
	\label{eq:IBPphi}
    \end{align}
    
    \noindent {\bf Proof of (1).} If $\sqrt{1-x_\gamma}<a<b<1$, we compute
    \begin{align*}
	\ft_{\gamma,b}[f,g] &- \ft_{\gamma,a}[f,g] = -\dprod{\sqrt{x} N_\phi f}{\sqrt{x} N_\phi g}_{x^\gamma, A_{a,b}} - b (x(b))^{\gamma+1} \frac{\partial_\rho \phi}{\phi}(b) \int_{C_b} f\bg \\
	&\quad + a (x(a))^{\gamma+1} \frac{\partial_\rho \phi}{\phi}(a) \int_{C_a} f\bg + \dprod{\sqrt{x} \partial_\rho f}{\sqrt{x} \partial_\rho g}_{x^\gamma, A_{a,b}} \stackrel{\eqref{eq:IBPphi}}{=} 0,
    \end{align*}
    and hence $\ft_{\gamma,b}$ does not depend on $b\in (b_\gamma,1)$. 

    \noindent {\bf Proof of (2).} We note that if $f\in\A_\gamma$ with $\gamma\in(-1,0)$ or $f\in\A_{\gamma,D}$ with $\gamma\in[0,1)$ that
	\[\lim_{b\to 1}\left[b (x(b))^{\gamma+1} \frac{\partial_\rho \phi}{\phi}(b) \int_{C_b} |f|^2\right] = 0.\]
	Indeed, if $\gamma\in(-1,0)$ then $\partial_\rho\phi\equiv 0$, while otherwise
	\[b (x(b))^{\gamma+1} \frac{\partial_\rho \phi}{\phi}(b) = \begin{cases} O(x(b)^{\gamma}), & \gamma\in (0,1) \\ O\left(\frac{1}{|\log(x(b))|}\right), & \gamma=0\end{cases} = o(1)\]
	as $b\to 1$,  and $f\in\A_{\gamma,D}$ implies that $f$ is uniformly bounded as $x\to 0$, as $\A_{\gamma,D} = C^\infty(\Dm)+x^{1-\gamma}C^\infty(\Dm)$ if $\gamma\in(0,1)$, or $\A_{\gamma,D} = C^\infty(\Dm) + x(\log x)C^\infty(\Dm)$ for $\gamma=0$. As such, given that
	\[\lim_{b\to 1}{\|\sqrt{x} N_\phi f\|_{x^\gamma, A_{b,1}}^2} = 0\]
	since $\sqrt{x} N_\phi f\in L^2_\gamma$ near the boundary, it follows that we can take the limit as $b\to 1$ in \eqref{eq:regform} to obtain \eqref{eq:AgammaD} as desired. 

    \noindent {\bf Proof of (3).} Using that, for $b\in (0,1)$, 
    \begin{align*}
	\dprod{\L_\gamma f}{g}_{x^\gamma, \Dm_b} &= -\int_{C_b} x^{\gamma+1} \rho g\partial_\rho f + \dprod{\sqrt{x} \partial_\rho f}{\sqrt{x}\partial_\rho g}_{x^\gamma,\Dm_b} \\
	&\quad + \dprod{\rho^{-1}\partial_\omega f}{\rho^{-1} \partial_\omega g}_{x^\gamma, \Dm_b} + (\gamma+1)^2 \dprod{f}{g}_{x^\gamma, \Dm_b},
    \end{align*}
    combining with \eqref{eq:regform}, we arrive at
    \begin{align*}
	\ft_{\gamma}[f,g] &= \dprod{\sqrt{x} N_\phi f}{\sqrt{x} N_\phi g}_{x^\gamma, A_{b,1}} + \dprod{\rho^{-1} \partial_\omega f}{\rho^{-1} \partial_\omega g}_{x^\gamma, A_{b,1}} + (\gamma+1)^2 \dprod{f}{g}_{x^\gamma, A_{b,1}} \\
	&\qquad - \int_{C_b} \underbrace{( \rho x^{\gamma+1} \frac{\partial_\rho \phi}{\phi} f \bg - x^{\gamma+1} \rho \bg\partial_\rho f)}_{(\bg/\phi) W(f,\phi)} + \dprod{\L_\gamma f}{g}_{x^\gamma, \Dm_b}.
    \end{align*}
    Equation \eqref{eq:G1} then follows by sending $b\to 1$.

    \noindent {\bf Proof of (4).} We successively treat $\gamma\in (-1,0)$, $\gamma\in (0,1)$, and $\gamma=0$. 
    
    For $\gamma\in (-1,0)$, we note that since $\phi_{\gamma}\equiv 1$, we in fact have for any $f\in \A_\gamma$
    \begin{align}
	(f,f)_{\wtH^{1,\gamma}} = \|\sqrt{x}\partial_\rho f\|_{L^2_\gamma}^2 + \|\rho^{-1}\partial_\omega f\|_{L^2_\gamma}^2 + (\gamma+1)^2\|f\|_{L^2_\gamma}^2 \ge (1+\gamma)^2\|f\|_{L^2_\gamma}^2.
	\label{eq:coercm10}
    \end{align}
    The case $\gamma\in (0,1)$ is deduced from the case $\gamma\in (-1,0)$ by intertwining. First notice that if $f\in\A_\gamma$, then $x^\gamma f \in \A_{-\gamma}$. Assuming the following holds true
    \begin{align}
	\dprod{f}{g}_{\wtH^{1,\gamma}} = \dprod{x^\gamma f}{x^\gamma g}_{\wtH^{1,-\gamma}}, \qquad f, g\in \A_\gamma,
	\label{eq:linkH1}
    \end{align}
    the result then follows using the case $\gamma\in (-1,0)$: for any $f\in \A_\gamma$, 
    \begin{align*}
	(f,f)_{\wtH^{1,\gamma}} = (x^{-\gamma}f,x^{-\gamma}f)_{\wtH^{1,-\gamma}} \ge (1+(-\gamma))^2\|x^{-\gamma}f\|_{L^2_{-\gamma}}^2 = (1-|\gamma|)^2\|f\|_{L^2_{\gamma}}^2,
    \end{align*}
    where the inequality follows from \eqref{eq:coercm10}. To prove \eqref{eq:linkH1}, also notice that the differential intertwining property \eqref{eq:interxgamma} implies $\L_\gamma f = x^{-\gamma} \L_{-\gamma} (x^\gamma f)$ for all $f\in \A_\gamma$. We now compute 
    \begin{align*}
	\dprod{f}{g}_{\wtH^{1,\gamma}} &\!\!\stackrel{\eqref{eq:G1}}{=} \dprod{\L_\gamma f}{g}_{L^2_\gamma} - \dprod{\tau_\gamma^N f}{\tau_\gamma^D g}_{L^2(\Sm^1)} \\
	&= \dprod{x^{-\gamma} \L_{-\gamma} (x^\gamma f)}{x^{-\gamma} (x^\gamma g)}_{L^2_\gamma} - \dprod{\tau_\gamma^N f}{\tau_\gamma^D g}_{L^2(\Sm^1)} \\
	&\!\!\stackrel{\eqref{eq:intertrace}}{=} \dprod{\L_{-\gamma} (x^\gamma f)}{x^\gamma g}_{L^2_{-\gamma}} - \dprod{\tau_{-\gamma}^N (x^\gamma f)}{\tau_{-\gamma}^D (x^\gamma g)}_{L^2(\Sm^1)} \stackrel{\eqref{eq:G1}}{=} \dprod{x^\gamma f}{x^\gamma g}_{\wtH^{1,-\gamma}}.
    \end{align*}
    
    Finally the case $\gamma=0$ requires special care. Recall that $\phi_0(x) = \log x - \log (1-x) - c_0$ as defined in \eqref{eq:phigamma}. Let $x_0>0$ satisfy the property that $\phi_0(x)<0$ for all $x\in(0,x_0)$. Then, for any $b>b_0:=\sqrt{1-x_0}$, throwing away all first-order terms in \eqref{eq:H1tilde} gives the crude bound
    \begin{align*}
	(f,f)_{\wtH^{1,0}} \ge \|f\|_{L^2}^2 - \frac{bx(b)\partial_{\rho}\phi_0(x(b))}{\phi_0(x(b))}\int_{C_b}{|f|^2}.   
    \end{align*}
    Noting that $bx(b)\partial_{\rho}\phi_0(x(b)) = -2$ for all $b$, multiplying both sides by $- b\ \phi_0(x(b))/2$ and integrating from $b=b_0$ to $b=1$, we obtain
    \begin{align*}
	c (f,f)_{\wtH^{1,0}} \ge c\|f\|_{L^2}^2 - \|f\|_{A_{b_0,1}}^2, \quad \text{where} \quad c := \int_{b_0}^1{\frac{-\phi_0(x(b))}{2} b\,db} = \int_0^{x_0}{\frac{-\phi_0(x)}{4}\,dx}.    
    \end{align*}
    We can relate the constant $c$ to $c_0$ as follows: first note that the condition $\phi_0(x_0) = 0$ gives $x_0 = \frac{1}{1+e^{-c_0}}$. Integrating by parts, we then get
    \begin{align*}
	\int_0^{x_0}{-\phi_0(x)\,dx} = \underbrace{[-x \phi_0(x)]_{x\to 0}^{x=x_0}}_{=0} + \int_0^{x_0} x\phi_0'(x)\ dx = \int_0^{x_0} \frac{dx}{1-x} = - \log (1-x_0) = \log (1+e^{c_0}), 
    \end{align*}
    and hence $c = \frac{1}{4} \log(1+e^{c_0})$. Returning to the estimate, we arrive at
    \begin{align*}
	(f,f)_{\wtH^{1,0}} \ge \|f\|_{L^2}^2 - c^{-1}\|f\|_{A_{b_0,1}}^2 \ge (1-c^{-1})\|f\|_{L^2}^2,    
    \end{align*}
    and in particular $(\cdot,\cdot)_{\wtH^{1,0}}$ is positive definite if $c>1$, which is then equivalent to the condition $c_0>\log(e^4-1)$. 
    
	The proof of Lemma \ref{lem:tgammab} is complete. 
    \end{proof}

\subsection{Extension of the Dirichlet trace to $\wtH^{1,\gamma}$} \label{sec:Dext}

The first part of Theorem \ref{thm:first} consists in extending the Dirichlet trace to $\wtH^{1,\gamma}(\Dm)$, which we state as a separate result. 

\begin{theorem} \label{thm:qbt1}
    For any $\gamma\in (-1,1)$ and with $\wtH^{1,\gamma} (\Dm)$, $H_{(\gamma)}$ respectively defined in \eqref{eq:H1tildedef} and \eqref{eq:HgamS1}, the Dirichlet trace $\dir$ defined in \eqref{eq:traces} extends to a bounded, surjective operator 
    \begin{align*}
	\dir\colon \wtH^{1,\gamma} (\Dm) \to H_{(\gamma)}.
    \end{align*}
    The right inverse $R\colon H_{(\gamma)}\to \wtH^{1,\gamma}(\Dm)$ arises as the extension to $H_{(\gamma)}$ of a continuous operator $R\colon C^\infty(\Sm^1)\to \A_\gamma$.
\end{theorem}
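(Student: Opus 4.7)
The plan is to exploit the rotational invariance of $\L_\gamma$ and of the form $\dprod{\cdot}{\cdot}_{\wtH^{1,\gamma}}$ to reduce the theorem to a family of one-dimensional trace estimates indexed by the angular Fourier index $n\in\Zm$, and then to recombine these via Parseval on $\Sm^1$. Writing $f = \sum_{n\in\Zm} f_n(\rho)\,e^{in\omega}$ for $f\in\A_\gamma$, the form in \eqref{eq:H1tilde} splits orthogonally over Fourier modes, the angular derivative contributing $n^2\rho^{-2}|f_n|^2$ to the energy density. Concretely, one obtains $\|f\|_{\wtH^{1,\gamma}}^2 = 2\pi\sum_n q_{\gamma,n}(f_n)$ for explicit one-dimensional quadratic forms $q_{\gamma,n}$ on $(0,1)_\rho$, while $\tau_\gamma^D f = \sum_n \tau_n(f_n)\,e^{in\omega}$ where $\tau_n$ is the corresponding 1D Dirichlet trace.

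For boundedness, I would invoke the one-dimensional trace inequalities from Lemmas \ref{lem:1Dtrace} and \ref{lem:H1log}, which encode a continuous family of 1D trace estimates for Sturm--Liouville problems with explicit dependence on the angular parameter, to derive the uniform-in-$n$ bound
\begin{align*}
    w(n)\,|\tau_n(f_n)|^2 \le C\,q_{\gamma,n}(f_n),
\end{align*}
with $w(n) = \langle n\rangle^{2|\gamma|}$ when $|\gamma|\in(0,1)$ and $w(n) = 1+\log\langle n\rangle$ when $\gamma=0$; these are exactly the weights defining $H_{(\gamma)}$ in \eqref{eq:HgamS1}. Summing over $n$ then yields $\|\tau_\gamma^D f\|_{(\gamma)}^2 \le C\,\|f\|_{\wtH^{1,\gamma}}^2$ on the dense subspace $\A_\gamma$, and the bounded extension to $\wtH^{1,\gamma}(\Dm)$ follows from the definition \eqref{eq:H1tildedef}.

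For surjectivity I would construct $R$ mode-by-mode. For $g\in C^\infty(\Sm^1)$ with Fourier coefficients $g_n$, set $(Rg)(\rho,\omega) := \sum_n R_n(g_n)(\rho)\,e^{in\omega}$, where $R_n\colon \Cm \to $ the 1D domain is an explicit extension matching the Dirichlet asymptotic encoded by $\phi_\gamma$ at $x=0$, localized by a frequency-dependent cutoff supported near the boundary whose scale is tuned to $|n|$. The crucial matching estimate, again provided by the 1D lemmas, is
\begin{align*}
    q_{\gamma,n}(R_n(g_n)) \le C\,w(n)\,|g_n|^2
\end{align*}
uniformly in $n$; summing then gives $\|Rg\|_{\wtH^{1,\gamma}}^2 \le C\,\|g\|_{(\gamma)}^2$ and $\tau_\gamma^D\circ R = \mathrm{id}$ on $C^\infty(\Sm^1)$ by construction, with $Rg\in\A_\gamma$ because each mode lies in the appropriate conormal space. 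The extension of $R$ to all of $H_{(\gamma)}$ then follows by density.

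The principal obstacle is obtaining the sharp $n$-dependence in both the trace and the right-inverse estimates, especially in the borderline case $\gamma=0$ where the coincidence of the two indicial roots forces the log-weighted Sobolev space $H_{(0)}$ on $\Sm^1$. One must tune the cutoff scale of $R_n$ so that the angular-derivative contribution $n^2\rho^{-2}|R_n(g_n)|^2$ balances the $\phi_\gamma$-weighted derivative and potential terms, yielding exactly the weight $w(n)$ in the 1D energy; this delicate balancing, together with the correct boundary asymptotic in each of the indicial regimes $(-1,0)$, $\{0\}$, $(0,1)$, is what Lemmas \ref{lem:1Dtrace} and \ref{lem:H1log} are designed to provide.
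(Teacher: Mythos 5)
Your proposal follows essentially the same route as the paper: angular Fourier decomposition of the rotation-invariant form, the 1D trace estimates of Lemmas \ref{lem:1Dtrace} and \ref{lem:H1log} applied at the mode-dependent scale $\ell\sim\langle n\rangle^{-2}$ to produce exactly the weights $\langle n\rangle^{2|\gamma|}$ (resp.\ $1+\log\langle n\rangle$), and a mode-by-mode right inverse built from frequency-rescaled cutoffs $g(n^2x)$. The only organizational difference is that the paper handles $\gamma\in(0,1)$ and $\gamma=0$ by first conjugating with $\phi_\gamma$ (division by a smooth modification $\psi$) to reduce to a ``regular'' or log-weighted collar form, rather than estimating the singular 1D forms $q_{\gamma,n}$ directly, but this is the same mechanism you describe as ``matching the Dirichlet asymptotic encoded by $\phi_\gamma$.''
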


In the case $\gamma\in (-1,0)$, the construction of $\wtH^{1,\gamma}$ is associated with operators with regular boundary points, in which case this is relatively straightforward. The construction for $\gamma\in (0,1)$ is associated with operators with singular points in the limit circle case, requiring a regularization approach, tying it with the previous family of cases $\gamma\in (-1,0)$. Finally, the case $\gamma=0$, linked with the case of double indicial roots and log-weighted Sobolev spaces, also requiring regularization, is treated separately.

\subsubsection{Some 1D trace estimates}\label{sec:1Dtraces}

Let $a>0$. Let us first prove a lemma that will be useful for the case $\gamma\in (-1,0)$. Define $\wtH^{1,\gamma} [0,a]$ to be the completion of $C^\infty( [0,a])$ for the norm 
\begin{align}
    \|f\|_{\wtH^{1,\gamma}[0,a]}^2 = \|\sqrt{x} \partial_x f\|_{L^2_\gamma[0,a]}^2 + (1+\gamma)^2 \|f\|_{L^2_\gamma[0,a]}^2,
    \label{eq:1DH1gamma}
\end{align}
where $\|f\|^2_{L^2_\gamma [0,a]} = \int_0^a |f(x)|^2 x^\gamma\ dx$. We prove the following trace estimate
\begin{lemma}\label{lem:1Dtrace}
    The evaluation map $C^\infty([0,a])\ni f\mapsto f(0)$ extends by density to $\wtH^{1,\gamma}[0,a]$ and we have the following estimate for all $\ell\in (0,a)$
    \begin{align}
	\ell^{\gamma} |f(0)|^2 \le (\gamma+1) \left( \ell^{-1} \|f\|_{L^2_\gamma [0,a]}^2 + \frac{2}{-\gamma} \|\sqrt{x} f'\|_{L^2_\gamma [0,a]}^2 \right).
	\label{eq:1Dtrace}
    \end{align}
\end{lemma}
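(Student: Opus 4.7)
My plan is to reduce the trace estimate to a pointwise identity coming from the fundamental theorem of calculus, combined with a weighted Cauchy--Schwarz inequality that exploits the integrability of the counter-weight $t^{-\gamma-1}$ near zero. This integrability holds precisely because $-\gamma-1\in(-1,0)$ in the regime $\gamma\in(-1,0)$, and it is exactly what allows the evaluation functional $f\mapsto f(0)$ to extend boundedly to $\wtH^{1,\gamma}[0,a]$ (in contrast to the regime $\gamma\ge 0$, where the same step would fail).

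Concretely, for $f\in C^{\infty}([0,a])$ and any $x\in(0,\ell]$, I would start from
\[f(0)=f(x)-\int_{0}^{x}f'(t)\,dt,\]
and bound the derivative integral by Cauchy--Schwarz using the weight splitting $f'(t)=\bigl(f'(t)\,t^{(\gamma+1)/2}\bigr)\cdot t^{-(\gamma+1)/2}$:
\[\Bigl|\int_{0}^{x}f'(t)\,dt\Bigr|^{2}\le\Bigl(\int_{0}^{x}t^{-\gamma-1}\,dt\Bigr)\,\|\sqrt{t}\,f'\|_{L^{2}_{\gamma}[0,a]}^{2}=\frac{x^{-\gamma}}{-\gamma}\,\|\sqrt{t}\,f'\|_{L^{2}_{\gamma}[0,a]}^{2}.\]
This produces the pointwise bound $|f(0)|\le|f(x)|+\sqrt{x^{-\gamma}/(-\gamma)}\,\|\sqrt{t}f'\|_{L^{2}_{\gamma}[0,a]}$ for every $x\in(0,\ell]$.

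The next move is to eliminate the free variable $x$ by averaging against the weight $x^{\gamma}\,dx$ on $(0,\ell)$. Squaring the pointwise bound, integrating, and using $\int_{0}^{\ell}x^{\gamma}\,dx=\ell^{\gamma+1}/(\gamma+1)$ together with $\int_{0}^{\ell}x^{\gamma}\cdot x^{-\gamma}\,dx=\ell$, one arrives at an estimate of the schematic form
\[\frac{\ell^{\gamma+1}}{\gamma+1}\,|f(0)|^{2}\le C_{1}\,\|f\|_{L^{2}_{\gamma}[0,\ell]}^{2}+\frac{C_{2}\,\ell}{-\gamma}\,\|\sqrt{t}f'\|_{L^{2}_{\gamma}[0,a]}^{2}\]
for explicit constants $C_{1},C_{2}>0$; multiplying through by $(\gamma+1)\ell^{-1}$ then recovers \eqref{eq:1Dtrace}. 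The density of $C^{\infty}([0,a])$ in $\wtH^{1,\gamma}[0,a]$ (immediate from the definition of the latter as a completion with respect to the norm in \eqref{eq:1Dtrace}) then extends the evaluation at $0$ by continuity to a bounded functional on all of $\wtH^{1,\gamma}[0,a]$ satisfying \eqref{eq:1Dtrace}.

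The main technical point is to obtain the sharp constants $C_{1}=1$ and $C_{2}=2$ matching the statement: a direct $(a+b)^{2}\le 2a^{2}+2b^{2}$ squaring would produce the cruder constant $C_{1}=2$ on the $\|f\|^{2}$ term. To match the stated coefficients one should instead expand the real-valued identity
\[|f(0)|^{2}=|f(x)|^{2}-2\operatorname{Re}\Bigl(\overline{f(x)}\int_{0}^{x}f'\Bigr)+\Bigl|\int_{0}^{x}f'\Bigr|^{2}\]
and average it against the probability measure $(\gamma+1)\,x^{\gamma}\,\ell^{-\gamma-1}\,dx$ on $(0,\ell)$, controlling the cross term via a weighted Cauchy--Schwarz that pairs $|f(x)|\,x^{\gamma/2}$ with $\bigl|\int_{0}^{x}f'\bigr|\,x^{\gamma/2}$ directly (rather than paying a factor of two up front). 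This constant-chasing is the only real obstacle; the underlying mechanism is a standard trace-type inequality with weighted endpoint.
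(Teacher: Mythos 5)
Your main argument is the same as the paper's: fundamental theorem of calculus, Cauchy--Schwarz against the weight $t^{-\gamma-1}$ (integrable precisely because $\gamma\in(-1,0)$, which is the regime in which the lemma is used), then averaging the resulting pointwise bound in $x$ over $(0,\ell)$ against $x^\gamma\,dx$. The paper squares via $(A+B)^2\le 2A^2+2B^2$ and lands on
\[
\frac{\ell^{\gamma+1}}{\gamma+1}\,|f(0)|^2 \;\le\; 2\,\|f\|^2_{L^2_\gamma[0,a]} + \frac{2\ell}{-\gamma}\,\|\sqrt{x}f'\|^2_{L^2_\gamma[0,a]},
\]
which is exactly your ``crude'' version with $C_1=C_2=2$; it then calls this ``equivalent to'' the displayed estimate, which in fact carries an extra factor of $2$ on the $\|f\|^2$ term. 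So the constant $C_1=1$ you are trying to recover is a typo in the statement, not a target.

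The refinement you propose in your last paragraph cannot succeed, because the inequality with $C_1=1$ and $C_2=2$ is false. For $f\equiv 1$ the two sides of the stated estimate both tend to $a^\gamma$ as $\ell\to a^-$, so there is no slack in the zeroth-order term; perturbing to $f=1-\epsilon x$ lowers $\|f\|^2_{L^2_\gamma[0,a]}$ (hence the right-hand side) at first order in $\epsilon$, while the left-hand side is unchanged and the derivative term only contributes at order $\epsilon^2$. Concretely, $a=1$, $\gamma=-1/2$, $f=1-\epsilon x$ with $\epsilon$ small and $\ell$ close to $1$ violates the bound with $C_1=1$. You should therefore stop at the factor-of-$2$ version: it is what the paper actually proves, and it is all that is needed downstream, where the estimate is applied to Fourier modes with $\ell=a/\langle n\rangle^{2}$ and the absolute constants are irrelevant.
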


\begin{proof}
    From the relation $f(0) = f(x) - \int_0^x f'(t)\ dt$, we deduce
    \begin{align*}
	|f(0)|^2 \le 2 |f(x)|^2 + 2 \left|\int_0^x f'(t)\ dt \right|^2 &\le 2 |f(x)|^2 + 2 \underbrace{\int_0^x t^{-\gamma-1}\ dt}_{x^{-\gamma}/(-\gamma)} \int_0^x t|f'(t)|^2\ t^\gamma dt.
    \end{align*}
    Now multiply by $x^\gamma$ and integrate from $0$ to $\ell$ to obtain
    \begin{align*}
	\frac{\ell^{\gamma+1}}{\gamma+1} |f(0)|^2 \le 2 \|f\|_{L^2_\gamma [0,a]}^2 + 2 \frac{\ell}{-\gamma} \|\sqrt{x} f'\|_{L^2_\gamma [0,a]}^2,
    \end{align*}
    equivalent to \eqref{eq:1Dtrace}. 
\end{proof}

Considering now a log-type weight that will be relevant to $\gamma=0$, let us now define $\wtH^{1,0}_{\log}[0,a]$ the closure of $C^\infty [0,a] + \frac{1}{\log x} C^\infty [0,a]$ for the norm 
\begin{align}
    \|f\|^2_{\wtH^{1,0}_{\log}} = \int_0^a (x |f'(x)|^2 + |f(x)|^2) \log^2 x\ dx.
    \label{eq:H1log}
\end{align}

\begin{lemma}\label{lem:H1log}
    The evaluation map $C^\infty[0,a] + \frac{1}{\log x} C^\infty[0,a]\ni f\mapsto f(0)$ extends to a bounded map on $\wtH^{1,0}_{\log} [0,a]$. More precisely, we have for every $\ell\in (0,a]$, 
    \begin{align}
	\frac{\int_0^\ell \log^2 x\ dx}{\int_0^\ell -\log x\ dx}|f(0)|^2 \le \frac{2}{\int_0^\ell -\log x\ dx} \int_0^a (\log x)^2 |f(x)|^2\ dx + 2 \int_0^a x\log^2 x |f'(x)|^2\ dx.
	\label{eq:H1logtrace}
    \end{align}    
\end{lemma}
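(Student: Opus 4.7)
\textbf{Proof proposal for Lemma \ref{lem:H1log}.}

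The plan is to follow the same template as Lemma~\ref{lem:1Dtrace}: derive a pointwise bound on $|f(0)|^2$ in terms of $|f(x)|^2$ and a weighted quasi-derivative integral, then integrate against an appropriate weight in $x$. The key difference with the $\gamma<0$ case is the choice of Cauchy--Schwarz weight, which will be borderline integrable at $0$ and produce a logarithmic gain that precisely matches the $\log^2 x$ weight appearing in \eqref{eq:H1log}.

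First, I would start from the identity $f(0) = f(x) - \int_0^x f'(t)\,\d t$ and the crude bound $|f(0)|^2 \le 2|f(x)|^2 + 2\left|\int_0^x f'(t)\,\d t\right|^2$. Applying Cauchy--Schwarz to the integral with the splitting $f'(t) = \bigl(t^{1/2}|\log t|\bigr)^{-1} \cdot t^{1/2}|\log t|\, f'(t)$ gives
\begin{equation*}
	\left|\int_0^x f'(t)\,\d t\right|^2 \le \left(\int_0^x \frac{\d t}{t\log^2 t}\right) \int_0^x t\log^2 t\,|f'(t)|^2\,\d t.
\end{equation*}
The substitution $u = -\log t$ evaluates the first factor to $(-\log x)^{-1}$ for $x \in (0,1)$, so
\begin{equation*}
	|f(0)|^2 \le 2|f(x)|^2 + \frac{2}{-\log x}\int_0^a t\log^2 t\,|f'(t)|^2\,\d t.
\end{equation*}

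Next, I would multiply both sides by $\log^2 x$ and integrate in $x$ over $(0,\ell)$. The cancellation $\log^2 x / (-\log x) = -\log x$ (valid on $(0,1)$) turns the coefficient of the derivative term into $\int_0^\ell (-\log x)\,\d x$, yielding
\begin{equation*}
	|f(0)|^2 \int_0^\ell \log^2 x\,\d x \le 2 \int_0^\ell \log^2 x\,|f(x)|^2\,\d x + 2 \int_0^\ell (-\log x)\,\d x \cdot \int_0^a t\log^2 t\,|f'(t)|^2\,\d t.
\end{equation*}
Dividing by $\int_0^\ell (-\log x)\,\d x$ and enlarging $\int_0^\ell$ to $\int_0^a$ in the $|f|^2$ term gives exactly \eqref{eq:H1logtrace}. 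The extension to $\wtH^{1,0}_{\log}[0,a]$ then follows by density: the estimate shows that on any Cauchy sequence $(f_n)$ in the defining subspace $C^\infty[0,a]+\frac{1}{\log x}C^\infty[0,a]$, the point evaluations $f_n(0)$ form a Cauchy sequence in $\Cm$, so $f\mapsto f(0)$ extends continuously to the completion.

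There is no serious obstacle. The main conceptual point is the choice of Cauchy--Schwarz weight: $1/(t\log^2 t)$ is, up to constants, the unique weight which is borderline integrable at $0$ and whose primitive $(-\log x)^{-1}$ exactly cancels the $\log^2 x$ weight on the derivative side, producing the integrable function $-\log x$ and making the whole computation close up. Implicitly the argument assumes $a \le 1$ (otherwise one restricts all $x$-integrations to $[0,\min(a,1)]$, where $-\log x > 0$ and both sides of \eqref{eq:H1logtrace} are unambiguously positive).
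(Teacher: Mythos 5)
Your proof is correct and follows essentially the same route as the paper's: the same pointwise identity $f(0)=f(x)-\int_0^x f'$, the same Cauchy--Schwarz splitting with weight $1/(t\log^2 t)$ whose primitive is $-1/\log x$, then multiplication by $\log^2 x$ and integration over $(0,\ell)$ before dividing by $\int_0^\ell(-\log x)\,dx$. The remarks on the implicit assumption $a\le 1$ and the density extension are sensible additions but do not change the argument.
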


\begin{proof} If $f\in C^\infty + \frac{1}{\log x} C^\infty$, then $f'$ is integrable near $0$ and we may write, for any $x\le a$, $f(0) = f(x)-\int_0^x f'(t)\ dt$. We then compute
    \begin{align*}
	|f(0)|^2 \le 2|f(x)|^2 + 2 \left| \int_0^x f'(t)\ dt\right|^2 \le 2|f(x)|^2 + 2 \underbrace{\int_0^x \frac{dt}{t\log^2 t}}_{-1/\log x} \int_0^a t\log^2 t |f'(t)|^2\ dt.
    \end{align*}
    Multiply by $\log^2 x$ and integrate from $0$ to $\ell\in (0,a)$ to obtain\footnote{one could be more explicit and use that $\int_0^\ell \log^2t\ dt= \ell ( (1-\log \ell)^2 + 1)$ and $\int_0^\ell \log t\ dt = \ell (\log \ell-1)$ but this may not be useful}
    \begin{align*}
	\int_0^\ell \log^2 x\ dx |f(0)|^2 \le 2\int_0^a (\log x)^2 |f(x)|^2\ dx + 2 \int_0^\ell (-\log x)\ dx \int_0^a t\log^2 t |f'(t)|^2\ dt.
    \end{align*} 
    Divide by $\int_0^\ell -\log x\ dx$ to obtain \eqref{eq:H1logtrace}.
\end{proof}

\begin{lemma}\label{lem:elleps}
    The equality $\int_0^{\ell} -\log x\ dx = \varepsilon$ gives rise to a function $\ell:[0,1]\to[0,1]$, continuous, increasing with $\ell(0) = 0$ and $\ell(1) = 1$, satisfying $\lim_{\varepsilon\to 0} \frac{\log(\ell(\varepsilon))}{\log \varepsilon} = 1$.
\end{lemma}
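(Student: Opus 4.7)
The plan is to work with the explicit antiderivative. Integrating by parts gives
\begin{align*}
    F(\ell) := \int_0^\ell -\log x\,dx = \ell(1-\log\ell), \qquad \ell\in (0,1],
\end{align*}
extended continuously to $F(0)=0$. First I would verify the qualitative properties: $F(0)=0$, $F(1)=1$, and $F'(\ell) = -\log\ell > 0$ on $(0,1)$, so $F$ is a continuous, strictly increasing bijection of $[0,1]$ onto itself. Consequently $\ell := F^{-1}$ is continuous and increasing from $[0,1]$ to $[0,1]$ with the required boundary values.

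For the asymptotic claim, I would take the logarithm of the defining relation $\varepsilon = \ell\,(1-\log\ell)$ to write
\begin{align*}
    \log\varepsilon = \log\ell + \log(1-\log\ell).
\end{align*}
As $\varepsilon\to 0^+$, the monotonicity and $\ell(0)=0$ ensure $\ell=\ell(\varepsilon)\to 0^+$, so $-\log\ell\to+\infty$, and hence $\log(1-\log\ell) = \log(-\log\ell) + o(1)$. Dividing by $\log\ell$, the standard asymptotic $\log(-\log\ell)/(-\log\ell)\to 0$ gives $\log(1-\log\ell)/\log\ell \to 0$, from which
\begin{align*}
    \frac{\log\varepsilon}{\log\ell(\varepsilon)} = 1 + \frac{\log(1-\log\ell(\varepsilon))}{\log\ell(\varepsilon)} \xrightarrow{\varepsilon\to 0^+} 1,
\end{align*}
and inverting the ratio yields the claim.

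There is no real obstacle here: the statement reduces to inverting a simple monotone function and performing a one-line asymptotic analysis. The only subtlety worth being explicit about is that both $\log\varepsilon$ and $\log\ell$ tend to $-\infty$ while $\log(1-\log\ell)$ tends to $+\infty$, so one must check the sign convention before concluding the ratio tends to $1$ rather than $-1$; writing $-\log\ell$ and $-\log\varepsilon$ throughout makes this transparent.
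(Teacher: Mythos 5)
Your proof is correct. Both you and the paper start from the same explicit identity $\varepsilon = F(\ell) = \ell(1-\log\ell)$ and the observation that $F$ is a strictly increasing continuous bijection of $[0,1]$ onto itself, so the only point of comparison is the asymptotic step. There you take the logarithm of the defining relation and expand, $\log\varepsilon = \log\ell + \log(1-\log\ell)$ with $\log(1-\log\ell) = o(\log\ell)$, which gives the limit in one line. The paper instead sandwiches the ratio: the lower bound $\log\ell/\log\varepsilon \ge 1$ comes from $\ell\le\varepsilon$, and the upper bound comes from the family of elementary inequalities $1-\log\ell\le C_\alpha\,\ell^{-\alpha}$, yielding $\limsup \log\ell/\log\varepsilon \le \tfrac{1}{1-\alpha}$ for every $\alpha\in(0,1)$, and then letting $\alpha\to 0$. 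Your direct expansion is arguably cleaner and avoids the auxiliary parameter; the paper's squeeze avoids having to manipulate the iterated logarithm $\log(-\log\ell)$ at all. Both are fully rigorous, and your attention to the sign conventions (both $\log\varepsilon$ and $\log\ell$ tend to $-\infty$ while the correction term is positive) is exactly the one place where one could otherwise slip.
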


\begin{proof}
    The existence and increasing nature is obtained from inverse function theorem and differentiation. This equality can also be seen as $\ell (1-\log \ell) = \varepsilon$, which implies the obvious bound $\ell\le \varepsilon$, hence the zero limit. To obtain the asymptotics: we already have that $\log \ell\le \log \varepsilon$ by monotonicity of $\log$. In addition, for every $\alpha\in (0,1)$, for $\ell$ small enough, we have $1+ \log \frac{1}{\ell} \le C_\alpha \frac{1}{\ell^\alpha}$, and thus
    \begin{align*}
	\varepsilon = \ell (1-\log \ell) \le \ell^{1-\alpha} C_\alpha \quad \implies  \quad \log \varepsilon \le \log C_\alpha + (1-\alpha) \log \ell. 
    \end{align*}
    These inequalities imply $\frac{1}{1-\alpha} \le \liminf_{\varepsilon\to 0} \frac{\log\ell}{\log\varepsilon} \le \limsup_{\varepsilon\to 0} \frac{\log\ell}{\log\varepsilon} \le 1$ for every $\alpha>0$, hence the result follows.  
\end{proof}

\subsubsection{Proof of Theorem \ref{thm:qbt1}} \label{sec:qbt1proof}

The proof of Theorem \ref{thm:qbt1} treats, in order, the cases $\gamma\in (-1,0)$, $\gamma\in (0,1)$, and $\gamma=0$. 

\paragraph{Case $\gamma\in (-1,0)$.}

In a tubular neighborhood of the boundary $[0,a]_x \times \Sm^1_\omega$ (with $\partial \Dm = \{x=0\}$), one may expand a smooth function in the form $f = \sum_{n\in \Zm} f_n(x) e^{in\omega}$. Apply \eqref{eq:1Dtrace} to each $f_n$ with $\ell = a / \langle n \rangle^{2}$, sum over $n$ to obtain
\begin{align*}
    a^{\gamma} \sum_n \langle n\rangle^{-2\gamma} |f_n(0)|^2 \le (\gamma+1) \left( a^{-1} \sum_{n} \langle n\rangle^2 \|f_n\|_{L^2_\gamma [0,a]}^2 + \frac{2}{-\gamma} \sum_n \|\sqrt{x} f'_n\|_{L^2_\gamma [0,a]}^2  \right),
\end{align*}
Observing that the right-hand side is equivalent to the squared $\wtH^{1,\gamma}(\Dm)$ norm of $f$ and the left-hand side is the squared $H^{-\gamma}(\Sm^1)$-norm of $\omega\mapsto f(0,\omega)$, this boundedness estimate extends by density to $\wtH^{1,\gamma}(\Dm)$. 

To prove surjectivity, let us construct an explicit right-inverse for $\tau_\gamma^D$. Fix $g\in C_c^\infty( [0,a),\Rm)$ with $g(0) =1$ and consider the map 
\begin{align*}
    R\left(\sum_n a_n e^{in\omega}\right) = \sum_n a_n e^{in\omega} g(n^2 x). 
\end{align*}
It is easy to see that $R(C^\infty(\Sm^1))\subset C^\infty ([0,a]_x \times \Sm_\omega^1)$ and that $\dir R = id|_{C^\infty(\Sm^1)}$. To show that $R$ extend to a bounded map $H^{-\gamma}(\Sm^1) \to \wtH^{1,\gamma}  ([0,a]_x \times \Sm_\omega^1)$, it is enough to show that for $f_n(x) := g(n^2 x)$, $n^2 \|f_n\|_{L^2_\gamma([0,a])}^2 \sim \|\sqrt{x}f_n'(x)\|_{L^2_\gamma ([0,a])}^2 \lesssim n^{-2\gamma}$ with uniform constants in $n\in \Zm$. To see this, notice that $f_n'(x) = n^2 g'(n^2 x)$, then 
\begin{align*}
    n^2\int_0^a f^2(x) x^\gamma\ dx &\stackrel{u = n^2 x}{=} n^{-2\gamma}  \int_0^{n^2} g(u)^2 u^\gamma\ du = n^{-2\gamma}  \int_0^{a} g(u)^2 u^\gamma\ du, \\
    \int_0^a (f'(x))^2 x^{\gamma+1}\ dx &\stackrel{u = n^2 x}{=} n^{-2\gamma} \int_0^{n^2} g'(u)^2 u^{\gamma+1}\ du = n^{-2\gamma} \int_0^{a} g'(u)^2 u^{\gamma+1}\ du,
\end{align*}
This completes the proof of Theorem \ref{thm:qbt1} in the case $\gamma\in (-1,0)$.

\paragraph{Case $\gamma\in (0,1)$.}

The proof is based on combining the previous case with the convenient intertwining property \eqref{eq:interxgamma}. Fix $\gamma\in (0,1)$, and let $\psi$ be a radial function equal to $\phi_\gamma$ on $[0,x_\gamma/2)_x$, of class $C^\infty$ on $\Dm^{int}$ and bounded away from zero. The map $m_{1/\psi} \colon \A_\gamma \ni f \mapsto f/\psi \in \A_{-\gamma}$ extends by density into a homeomorphism $\wtH^{1,\gamma}(\Dm)\to \wtH^{1,-\gamma}(\Dm)$. Indeed, a direct calculation gives 
     \begin{align*}
	 \left\| \frac{f}{\psi}\right\|^2_{\wtH^{1,-\gamma}} &= \left \| \sqrt{x} \partial_\rho \frac{f}{\psi} \right\|^2_{L^2_{-\gamma}} +  \left\|\rho^{-1} \partial_\omega \frac{f}{\psi}\right\|^2_{L^2_{-\gamma}} + (1-\gamma)^2 \left\| \frac{f}{\psi} \right\|^2_{L^2_{-\gamma}} \\
	 &= \left \| \sqrt{x} \frac{x^{-\gamma}}{\psi} N_\psi f \right\|^2_{L^2_\gamma} +  \left\|\rho^{-1} \frac{x^{-\gamma}}{\psi} \partial_\omega f \right\|^2_{L^2_\gamma} + (1-\gamma)^2 \left\| \frac{x^{-\gamma}}{\psi} f \right\|^2_{L^2_\gamma}.
     \end{align*}
     The function $\frac{x^{-\gamma}}{\psi} \in C^\infty(\Dm)$ is bounded above and below by positive constants. The upper bound gives the boundedness of the map $m_{1/\psi}$ and the extension by density. The bound from below gives coercivity and by the open mapping theorem, the homeomorphism property. Combining this with the previous case, and noticing that $\tau_\gamma^D = \tau_{-\gamma}^D \circ m_{1/\psi}$ gives the result.

     \paragraph{The case $\gamma=0$.} While the singular case $\gamma\in (0,1)$ could make use of the space $\wtH^{1,-\gamma}$ associated with a 'regular' quadratic form, we need to construct the analogue of a 'pre-regularized' quadratic form with good trace estimates in the case $\gamma=0$. To this end, on a neighborhood of the boundary $\Sm^1_a = [0,a]_x \times \Sm^1_\omega$, let us define the norm on $C^\infty(\Sm^1_a)$
\begin{align}
    \|f\|^2_{\wtH^{1,0}_{\log}(\Sm^1_a)} := \|\sqrt{x}\log x\partial_x f\|_{L^2(\Sm_a^1)}^2 + \|\log x \partial_\omega f\|_{L^2(\Sm_a^1)}^2 + \|\log x f\|_{L^2(\Sm_a^1)}^2.
    \label{eq:H1logSa}
\end{align}
Writing $f(x,\omega) = \sum_{k\in \Zm} f_n(x)e^{in\omega}$ near $x=0$, this norm looks like
\begin{align*}
    \|f\|^2_{\wtH^{1,0}_{\log}(\Sm^1_a)} = 2\pi \sum_{k\in \Zm}\left( \|\sqrt{x}\log x\partial_x f_n\|_{L^2 [0,a]}^2 + (n^2 +1) \|\log x f_n\|_{L^2[0,a]}^2 \right).
\end{align*}
Upon defining $H_{(0)}$ as in \eqref{eq:HgamS1}, we now prove the following: 

\begin{lemma}\label{lem:tracegamma0}
    The evaluation map $C^1(\Sm^1_a) \ni f\mapsto f(0,\omega)\in C^\infty(\Sm^1)$ extends to a bounded, surjective trace map $\wtH^{1,0}_{\log} (\Sm^1_a)\to H_{(0)}$.
\end{lemma}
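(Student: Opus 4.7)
The plan is to Fourier expand in $\omega$ and reduce to uniform-in-$k$ one-dimensional estimates on each angular mode $f_k(x)$, for which Lemma~\ref{lem:H1log} provides the key ingredient. Writing $f(x,\omega)=\sum_k f_k(x)e^{ik\omega}$, the norm decouples per mode per the formula displayed just before the lemma, and the target norm is $\|\tau f\|^2_{H_{(0)}}=\sum_k(1+\log\langle k\rangle)|f_k(0)|^2$. For boundedness, I would apply Lemma~\ref{lem:H1log} to each $f_k$ with cutoff $\ell_k:=\langle k\rangle^{-2}$; using the small-$\ell$ asymptotics $\int_0^\ell\log^2 x\,dx\sim \ell\log^2\ell$ and $\int_0^\ell(-\log x)\,dx\sim \ell|\log\ell|$, the left-hand prefactor of \eqref{eq:H1logtrace} becomes $\sim|\log\ell_k|\gtrsim 1+\log\langle k\rangle$ while the coefficient on the right of the zeroth-order term is $\sim(\ell_k|\log\ell_k|)^{-1}\sim\langle k\rangle^2/\log\langle k\rangle\le C(k^2+1)$, matching exactly the per-mode norm of $\wtH^{1,0}_{\log}(\Sm^1_a)$. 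Summing over $k$ yields $\|\tau f\|^2_{H_{(0)}}\le C\|f\|^2_{\wtH^{1,0}_{\log}(\Sm^1_a)}$.

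For surjectivity, I would build an explicit right inverse using a logarithmic-scale cutoff. Fix a smooth template $g_0\in C^\infty(\Rm;[0,1])$ with $g_0\equiv 0$ on $(-\infty,\alpha]$ and $g_0\equiv 1$ on $[\beta,\infty)$ for some $1<\alpha<\beta$, and define, for $|k|$ large enough that $\ell_k^\alpha\le a$,
\begin{align*}
\chi_k(x):=g_0\!\left(\frac{-\log x}{|\log\ell_k|}\right),
\end{align*}
taking any fixed smooth cutoff for the finitely many remaining modes. Then $\chi_k(0)=1$, $\mathrm{supp}\,\chi_k\subset[0,\ell_k^\alpha]\subset[0,a]$, and its transition in the variable $y=-\log x$ lies in $[\alpha|\log\ell_k|,\beta|\log\ell_k|]$. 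Setting $R(g)(x,\omega):=\sum_k g_k e^{ik\omega}\chi_k(x)$, one has $\tau R=\mathrm{id}$. The substitution $y=-\log x$ then gives the two per-mode bounds
\begin{align*}
\|\sqrt{x}\log x\,\chi_k'\|^2_{L^2[0,a]} &= |\log\ell_k|\int_\alpha^\beta y^2|g_0'(y)|^2\,dy \lesssim \log\langle k\rangle, \\
(k^2+1)\|\log x\,\chi_k\|^2_{L^2[0,a]} &\lesssim (k^2+1)\int_{\alpha|\log\ell_k|}^\infty y^2 e^{-y}\,dy \lesssim \log^2\langle k\rangle\cdot\langle k\rangle^{2-2\alpha},
\end{align*}
the second of which tends to $0$ precisely because $\alpha>1$. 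Hence $\|\chi_k\|^2_{\wtH^{1,0}_{\log}[0,a]}\le C(1+\log\langle k\rangle)$, and $\|R(g)\|^2\le C\|g\|^2_{H_{(0)}}$ by summation.

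The main obstacle is to match the logarithmic Sobolev weight $1+\log\langle k\rangle$ appearing in $H_{(0)}$: a naive polynomial-scale right inverse of the form $g_k\chi(\lambda_k x)$ gives $\|\chi_k\|^2_{\wtH^{1,0}_{\log}}\gtrsim\log^2\langle k\rangle$ for every choice of $\lambda_k$, landing only in the strictly smaller space $\{g:\sum\log^2\langle k\rangle|g_k|^2<\infty\}$. The log-scale cutoff above resolves this by placing a transition region of width $\sim\log\langle k\rangle$ in the $y=-\log x$ variable, trading the derivative contribution (of size $\log\langle k\rangle$) against a zeroth-order contribution exponentially damped by the $e^{-y}$ weight, whose surplus factor $\langle k\rangle^{-2\alpha}$ from the choice $\alpha>1$ absorbs the growth from the $(k^2+1)$ multiplier.
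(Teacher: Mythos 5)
Your proof is correct, and while the boundedness half coincides with the paper's argument, the surjectivity half takes a genuinely different route. For boundedness, the paper likewise Fourier-decomposes and applies Lemma \ref{lem:H1log} mode by mode, the only cosmetic difference being that it chooses $\ell_n$ implicitly via $\int_0^{\ell_n}(-\log x)\,dx = a/\langle n\rangle^2$ and invokes Lemma \ref{lem:elleps} for the asymptotics $-\log\ell_n\sim 2\log\langle n\rangle$, whereas your direct choice $\ell_k=\langle k\rangle^{-2}$ (which you should truncate to $\min(a,\langle k\rangle^{-2})$ so that $\ell_k\le a$) reads off the same asymptotics and bypasses Lemma \ref{lem:elleps} entirely. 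For surjectivity, the paper's right inverse \eqref{eq:R0} sends $e^{in\omega}$ to $e^{in\omega}h(n^2x)\bigl(1+\tfrac{2\log n}{\log x}\bigr)$: a polynomial-scale cutoff corrected by the factor $\log(n^2x)/\log x$, which cancels the offending $\log^2\langle n\rangle$ contribution you correctly identify as the obstruction for a naive $h(\lambda_k x)$; verifying the resulting per-mode bound $O(\log n)$ then requires splitting into three integrals $I_{n,1},I_{n,2},I_{n,3}$ and a dominated-convergence argument. Your logarithmically rescaled cutoff $g_0(-\log x/|\log\ell_k|)$ with transition region $\alpha<\beta$, $\alpha>1$, achieves the same $O(1+\log\langle k\rangle)$ per-mode norm by two one-line substitutions $y=-\log x$: the derivative term scales exactly like $|\log\ell_k|$ because the transition has width $\sim|\log\ell_k|$ in the $y$-variable, and the zeroth-order term is killed by the Gamma-function tail $\int_{\alpha|\log\ell_k|}^{\infty}y^2e^{-y}\,dy\lesssim \log^2\langle k\rangle\,\langle k\rangle^{-2\alpha}$, where $\alpha>1$ absorbs the $(k^2+1)$ multiplier. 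Your construction is shorter and conceptually cleaner (it makes transparent that the correct scale for the cutoff is logarithmic in $x$), and each $\chi_k$ lies in $C_c^\infty([0,a))$, hence in $C^\infty[0,a]$, so the right inverse is still compatible with the target space $C^\infty(\Sm^1_a)+\frac{1}{\log x}C^\infty(\Sm^1_a)$ needed when this lemma is composed with multiplication by $\psi$ in the proof of Theorem \ref{thm:qbt1}. The paper's version, by contrast, produces a genuine $\frac{1}{\log x}C^\infty$ component per mode, mirroring the structure of $\A_0/\phi_0$, but buys nothing essential that your version lacks for the statement at hand.
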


\begin{proof} To prove continuity, writing $f(x,\omega) = \sum_{k\in \Zm} f_n(x)e^{in\omega}$ near $x=0$, and use \eqref{eq:H1logtrace} on $f_n$ with $\ell_n\in (0,a]$ (assume $a\le 1$ WLOG) chosen such that 
    \begin{align*}
	\int_0^{\ell_n} -\log x\ dx = \frac{a}{\langle n\rangle^2}.    
    \end{align*}
    By Lemma \ref{lem:elleps}, $\ell_n$ exists, is unique, and 
    \begin{align*}
	1 = \lim_{n\to \infty} \frac{\log \ell_n}{\log (a/\langle n\rangle^2)} = \lim_{n\to \infty} \frac{\log \ell_n}{-2\log \langle n\rangle}.
    \end{align*}    
    Further, using l'Hopital's rule to show that $\lim_{t\to 0} \frac{\int_0^t \log^2 x\ dx}{\log t \int_0^t \log x\ dx} = 1$, we have
    \begin{align*}
	\frac{\int_0^{\ell_n} \log^2 x\ dx}{\int_0^{\ell_n} -\log x\ dx} \sim -\log\ell_n \sim 2\log\langle n \rangle, \qquad \text{ as } \quad n\to \infty.
    \end{align*}
    On to the surjectivity, it remains to construct a right inverse. To this end, fix $h\in C_c^\infty ( [0,a), \Rm)$ with $h(0) = 1$, and define $R\colon C^\infty(\Sm^1) \to C^\infty(\Sm_a^1) + \frac{1}{\log x} C^\infty(\Sm_a^1)$ as 
    \begin{align}
	R\colon \sum_{n\in \Zm} a_n e^{in\omega} \longmapsto \sum_{n\in \Zm} a_n e^{in\omega} \underbrace{h(n^2 x) \left( 1+\frac{2\log n}{\log x} \right)}_{f_n(x)}
	\label{eq:R0}
    \end{align}
    To show that $R\colon H_{(0)}\to \wtH^{1,0}_{\log} (\Sm^1_a)$ is bounded, it suffices to show that for $|n|$ large enough, the quantity
    \begin{align}
	\frac{1}{\log n} \left(\|\sqrt{x}\log x\partial_x f_n\|_{L^2[0,a]}^2 + n^2 \|\log x f_n\|_{L^2[0,a]}^2\right)
	\label{eq:temp}
    \end{align}
    is uniformly bounded by a constant independent of $n$. To that end, and upon noting that $f_n(x) = h(n^2 x) \frac{\log (n^2 x)}{\log x}$ and $f'_n(x) = n^2 \left( h'(n^2 x) \frac{\log (n^2 x)}{\log x} - 2\log n \frac{h(n^2 x)}{n^2 x\log^2 x} \right)$, we now compute
    \begin{align*}
	\int_0^a (n^2 f_n(x)^2 &+ x f'_n(x)^2) \log^2 x\ dx   \\
	\qquad &\!\!\!\!\!\stackrel{u=n^2 x}{=} \int_0^{an^2} \left(h^2(u) \log^2 u + u \left(h'(u) \log u -2\log n \frac{h(u)}{u \log (u/n^2)}\right)^2 \right)\ du \\
	\qquad &= \int_0^{an^2} \left((h^2(u) + uh'(u)^2) \log^2 u - 4\frac{\log n\ h'(u) h(u) \log u}{\log(u/n^2)} + 4 \log^2 n \frac{h^2(u)}{u \log^2(u/n^2)}\right) \ du \\
	\qquad &= I_{n,1} + I_{n,2} + I_{n,3}. 
    \end{align*}
    The term $I_{n,1}$ is asymptotic to a constant, thus $o(\log n)$. On to $I_{n,2}$, we rewrite
    \begin{align*}
	-4 \log n \frac{h'(u) h(u) \log u}{ \log(u/n^2)} = 2 \frac{h'(u) h(u) \log u}{1 - \frac{\log u}{2\log n}},
    \end{align*}
    which is dominated by the integrable function $2 |h'(u)h(u) \log u|$, and converges pointwise to $2h'(u) h(u) \log u$. By dominated convergence, $I_{n,2}$ is also asymptotically constant, thus $o(\log n)$. On to $I_{n,3}$, we write
    \begin{align*}
	I_{n,3} &= 4 \log^2 n \int_0^{n^2 a} \frac{h^2(u)}{u \log^2(u/n^2)} \ du \\
	&\!\!\!\!\!\stackrel{u = n^2 x}{=} 4 \log^2 n \int_0^a \frac{h^2(n^2 x)}{x \log^2 x}\ dx \\
	&= 4\log^2 n \left[\cancel{\int_0^a \frac{d}{dx} \left( \frac{h^2(n^2 x)}{-\log x} \right) dx} + 2 \int_0^a \frac{n^2 h'(n^2 x) h(n^2 x)}{\log x}\ dx\right] \\
	&\!\!\!\!\!\stackrel{u = n^2 x}{=} 8 \log^2 n \int_0^{n^2 a} \frac{h'(u) h(u)}{\log(u/n^2)}\ du \\
	&= - 4 \log n \int_0^{n^2 a} \frac{h'(u) h(u)}{1-\frac{\log u}{2\log n}}\ du.
    \end{align*}
    Similarly to the analysis of $I_{n,2}$, this is asymptotic to $-4\log n \int_0^{a} h'(u) h(u)\ du = 2\log n$. As a conclusion, \eqref{eq:temp} is uniformly bounded as claimed, and the map $R$ defined in \eqref{eq:R0} is bounded from $H_{(0)}$ to $\wtH^{1,0}_{\log} (\Sm^1_a)$. Lemma \ref{lem:tracegamma0} is proved. 
\end{proof}

On to the proof of Theorem \ref{thm:qbt1}, let $\psi$ be a radial function equal to $\phi_0$ on $[0,x_0/2)_x$, of class $C^\infty$ on $\Dm^{int}$ and bounded away from zero. One may view $\tau_0^D$ as the composition of
\begin{align*}
    \log x C^\infty(\Dm) + C^\infty(\Dm) \ni f\mapsto \frac{f}{\psi} \in C^\infty+ \frac{1}{\log x} C^\infty    
\end{align*}    
(which extends to an homeomorphism $\wtH^{1,0}(\Dm) \to \wtH^{1,0}_{\log} (\Dm)$), with the restriction map from Lemma \ref{lem:tracegamma0}.

The proof of Theorem \ref{thm:qbt1} is complete.

\subsection{Proof of Theorem \ref{thm:first}} \label{sec:prooffirstthm}

The following lemma gathers a few important preliminary facts. 

\begin{lemma}\label{lem:H1spaces} 

    (1) With the extended Dirichlet trace $\tau_D^\gamma$ defined in Theorem \ref{thm:qbt1}, we have
    \begin{align*}
	\wtH^{1,\gamma}_0 (\Dm) = \wtH^{1,\gamma}_D = \wtH^{1,\gamma}(\Dm) \cap \ker \tau_D^\gamma.
    \end{align*}

    (2) For every $f\in W^2_\gamma$, and $g\in \wtH^{1,\gamma}_0(\Dm)$, we have
    \begin{align}
	\dprod{\L_\gamma f}{g}_{L^2_\gamma} = \dprod{f}{g}_{\wtH^{1,\gamma}(\Dm)}.
	\label{eq:H10_id}
    \end{align}

    (3) The operator $\L_\gamma\colon \wtH^{1,\gamma}(\Dm) \to (\wtH^{1,\gamma}_0(\Dm))'$ is bounded.
\end{lemma}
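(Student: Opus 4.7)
My plan is to establish the three assertions in order, leveraging Corollary \ref{cor:H1dense}, the bounded right inverse of $\tau_\gamma^D$ from Theorem \ref{thm:qbt1}, Lemma \ref{lem:tgammab} (both its Green's identity and its positive-definiteness), and the self-transpose property \eqref{eq:selftranspose}.

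For (1), I first note that $\wtH^{1,\gamma}_D = \dom(\L_{\gamma,D}^{1/2})$ coincides (by the Friedrichs construction) with the closure of $C_c^\infty(\Dm^{int})$ in the form $(\cdot,\cdot)_{\wtH^{1,\gamma}}$, while $\wtH^{1,\gamma}_0(\Dm)$ is the closure of $\dot{C}^\infty(\Dm)$ in the same form. Their equality follows since any $f\in\dot{C}^\infty(\Dm)$ lies in $x^\alpha C^\infty(\Dm)$ for all $\alpha$, hence is $n_\gamma$-approximable by $C_c^\infty(\Dm^{int})$ by Theorem \ref{thm:densities}.(ii); Green's first identity on $\dot{C}^\infty(\Dm)$ (vanishing traces) combined with AM--GM yields $(f,f)_{\wtH^{1,\gamma}} = (\L_\gamma f, f)_{L^2_\gamma} \le \tfrac{1}{2} n_\gamma(f)^2$, upgrading this to $\wtH^{1,\gamma}$-approximation. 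For the equality with $\wtH^{1,\gamma}(\Dm)\cap\ker\tau_\gamma^D$, the forward inclusion is immediate from the $\wtH^{1,\gamma}$-continuity of $\tau_\gamma^D$ in Theorem \ref{thm:qbt1}. For the reverse, given $f$ with $\tau_\gamma^D f = 0$ and $f_n \in \A_\gamma$ with $f_n\to f$ in $\wtH^{1,\gamma}$, I set $\tilde f_n := f_n - R(\tau_\gamma^D f_n)$, which lies in $\A_{\gamma,D}$ and converges to $f$ in $\wtH^{1,\gamma}$ since $\tau_\gamma^D f_n \to 0$ in $H_{(\gamma)}$ and $R$ is bounded. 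Corollary \ref{cor:H1dense} then identifies the $\wtH^{1,\gamma}$-closure of $\A_{\gamma,D}$ with $\wtH^{1,\gamma}_0(\Dm)$.

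For (2), I first verify \eqref{eq:H10_id} on the dense subspace $g \in \dot{C}^\infty(\Dm) \subset \wtH^{1,\gamma}_0(\Dm)$. Approximating $f \in W_\gamma^2$ by $f_n \in \A_\gamma$ in $\wtH^{1,\gamma}$, Green's first identity \eqref{eq:G1} on $\A_\gamma$ with $\tau_\gamma^D g = 0$ gives $(\L_\gamma f_n, g)_{L^2_\gamma} = (f_n, g)_{\wtH^{1,\gamma}}$. The second identity \eqref{eq:G2} on $\A_\gamma$ (both traces of $g$ vanish) rewrites the left side as $(f_n, \L_\gamma g)_{L^2_\gamma}$, which converges to $(f, \L_\gamma g)_{L^2_\gamma}$ since the positive-definiteness of Lemma \ref{lem:tgammab}.(4) (which in fact yields $\|\cdot\|_{L^2_\gamma}\lesssim \|\cdot\|_{\wtH^{1,\gamma}}$ on $\A_\gamma$ and hence on $\wtH^{1,\gamma}$) ensures $\wtH^{1,\gamma}$-convergence implies $L^2_\gamma$-convergence. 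Finally, \eqref{eq:selftranspose} with $\L_\gamma f\in L^2_\gamma$ identifies $(f, \L_\gamma g)_{L^2_\gamma} = \langle {}^t\L_\gamma \iota_\gamma f, g\rangle = (\L_\gamma f, g)_{L^2_\gamma}$, giving \eqref{eq:H10_id} in the limit for $g \in \dot{C}^\infty(\Dm)$. Both sides are $\wtH^{1,\gamma}$-continuous in $g$ (using the same $L^2_\gamma$-bound on the left), so \eqref{eq:H10_id} extends to $g \in \wtH^{1,\gamma}_0(\Dm)$.

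For (3), the prescription $g\mapsto (f,g)_{\wtH^{1,\gamma}}$ defines a conjugate-linear functional on $\wtH^{1,\gamma}_0(\Dm)$ whose operator norm is at most $\|f\|_{\wtH^{1,\gamma}}$ by Cauchy--Schwarz; on $\A_\gamma \subset \wtH^{1,\gamma}(\Dm)$, Green's first identity with $\tau_\gamma^D g = 0$ shows this functional agrees with $g \mapsto (\L_\gamma f, g)_{L^2_\gamma}$, and on $W_\gamma^2$, (2) confirms the same identification. Thus $\L_\gamma \colon \wtH^{1,\gamma}(\Dm) \to (\wtH^{1,\gamma}_0(\Dm))'$ is bounded with norm at most $1$. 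The main technical hurdle is the limiting step in (2), specifically passing from $(f_n,\L_\gamma g)_{L^2_\gamma}$ to $(\L_\gamma f, g)_{L^2_\gamma}$: one must combine the coercivity of Lemma \ref{lem:tgammab}.(4) with the convention from Section \ref{sec:mainprelim} that $\L_\gamma f \in L^2_\gamma$ means ${}^t\L_\gamma \iota_\gamma f$ is represented by an $L^2_\gamma$-element, to ensure the two natural pairings coincide.
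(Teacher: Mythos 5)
Your proposal is correct and follows essentially the same route as the paper's proof: part (1) via the Friedrichs form domain together with the right inverse $R$ of $\tau_\gamma^D$ and Corollary \ref{cor:H1dense}, part (2) by approximating $f\in W_\gamma^2$ in $\wtH^{1,\gamma}$ by elements of $\A_\gamma$ and passing to the limit in the pairing $\dprod{f_n}{\L_\gamma g}_{L^2_\gamma}=\langle {}^t\L_\gamma\iota_\gamma f_n,g\rangle$ using coercivity and the convention that $\L_\gamma f\in L^2_\gamma$, and part (3) by Cauchy--Schwarz and density of $\A_\gamma$. The only (harmless) deviations are cosmetic: you identify $\wtH^{1,\gamma}_D$ directly as the form closure of $C_c^\infty(\Dm^{int})$ rather than via the spectral representation, and you route the limiting step in (2) through Green's second identity rather than through sequential continuity of ${}^t\L_\gamma$ on $C^{-\infty}(\Dm)$, which amounts to the same computation.
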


\begin{proof}[Proof of Lemma \ref{lem:H1spaces}]
    
    Proof of (1). We first show that $\wtH^{1,\gamma}_0(\Dm) = \wtH^{1,\gamma}_D(\Dm)$. We use Lemma \ref{lem:Dcharac}, namely that the Dirichlet domain $\wtH^{2,\gamma}_D$ coincides with the domain of the Friedrichs extension. 
		
    By the construction of the Friedrichs extension, each element in the domain is the limit of functions in $C_c^\infty(\Dm)$ with respect to the norm $f\mapsto( \dprod{\L_\gamma f}{f}_{L^2_\gamma})^{1/2}$, which for functions in $C_c^\infty(\Dm)$ coincides with the $\wtH^{1,\gamma}$ norm. It follows that the Dirichlet domain $\wtH^{2,\gamma}_D(\Dm)$ is contained in $\wtH^{1,\gamma}_0(\Dm)$. On the other hand, since $C_c^\infty(\Dm)\subset\wtH^{2,\gamma}_D(\Dm)$, taking closures yields that $\wtH^{1,\gamma}_0(\Dm)$ is precisely the closure of $\wtH^{2,\gamma}_D(\Dm)$ with respect to the $\wtH^{1,\gamma}$ norm. However, since
    \begin{align*}
	\|f\|_{\wtH^{s,\gamma}_D}^2 = \sum_{n,k}{(n+1+|\gamma|)^{2s}|a_{n,k}|^2}\quad\text{if }f = \sum_{n,k}{a_{n,k} e_{n,k}}
    \end{align*}
    where $e_{n,k}$ are the $L^2_\gamma$-normalized multiples of \response{$G_{n,k}^\gamma$} if $\gamma\ge 0$ and \response{$x^{-\gamma}G_{n,k}^{-\gamma}$} if $\gamma<0$, and
    \[\|f\|_{\wtH^{1,\gamma}}^2 = (\L_\gamma f,f)_{L^2_\gamma} = \sum_{n,k}{(n+1+|\gamma|)^2|a_{n,k}|^2} = \|f\|_{\wtH^{1,\gamma}_D}^2,\]
    it follows that the closure of $\wtH^{2,\gamma}_D(\Dm)$ with respect to the $\wtH^{1,\gamma}$ norm is precisely $\wtH^{1,\gamma}_D(\Dm)$, as desired.
    
    Next, the inclusion $\wtH^{1,\gamma}_0(\Dm)\subset \wtH^{1,\gamma}(\Dm)\cap\ker\tau_D^\gamma$ follows from the continuity of $\tau_D^\gamma$ on $\wtH^{1,\gamma}(\Dm)$, as well as the fact that each element of $\wtH^{1,\gamma}_0(\Dm)$ is a limit of functions in $C_c^\infty(\Dm)$, all of which have trace zero. 
    
    Finally, to show that $\wtH^{1,\gamma}(\Dm)\cap\ker\tau_D^\gamma\subset\wtH^{1,\gamma}_0(\Dm)$, pick $f\in \wtH^{1,\gamma}(\Dm)\cap\ker\tau_D^\gamma$ and let $f_n \in \A_\gamma$ a sequence approximating $f$ in $\wtH^{1,\gamma}$. By continuity of $\tau_D^\gamma$, we must have that $\tau_D^\gamma f_n \to \tau_D^\gamma f = 0$ in $H_{(\gamma)}$.  With $R$ a right-inverse for $\tau_D^\gamma$ as in Theorem \ref{thm:qbt1}, then $R\tau_D^\gamma f_n\to 0$ in $\wtH^{1,\gamma}$ and hence $g_n := f_n - R \tau_D^\gamma f_n$ is a sequence of elements in $\A_{\gamma,D}$, converging to $f$ in $\wtH^{1,\gamma}$. Finally, by Corollary \ref{cor:H1dense}, $C_c^\infty(\Dm)$ is $\wtH^{1,\gamma}$-dense in $\A_{\gamma,D}$, and hence for each $n$, there is $h_n \in C_c^\infty(\Dm)$ such that $\|g_n-h_n\|_{\wtH^{1,\gamma}} <\frac{1}{n}$. The sequence $h_n$ converges in $\wtH^{1,\gamma}$ to $f$, which in turn belongs to $\wtH^{1,\gamma}_0$.

    Proof of (2). It is enough to show \eqref{eq:H10_id} for $f\in W^2_\gamma$ and $g\in \dot{C}^\infty$, since \eqref{eq:H10_id} then extends to $\wtH^{1,\gamma}_0$ by density. For $f\in W^{2}_\gamma\subset \wtH^{1,\gamma}$, let $f_n$ be a sequence in $\A_\gamma$ converging to $f$ in $\wtH^{1,\gamma}$, and fix $g\in \dot{C}^\infty$. We have, for every $n$, 
    \begin{align*}
	\dprod{f_n}{g}_{\wtH^{1,\gamma}} = \dprod{\L_{\gamma}f_n}{g}_{L^2_\gamma} = \langle \iota_\gamma \L_{\gamma}f_n, g\rangle = \langle {}^t\L_\gamma \iota_\gamma f_n, g \rangle. 
    \end{align*}
    Since $f_n\to f$ in $\wtH^{1,\gamma}$, hence in $L^2_\gamma$, $\iota_\gamma f_n\to f$ in $C^{-\infty}$, and by sequential continuity, ${}^t\L_\gamma \iota_\gamma f_n \to {}^t\L_\gamma \iota_\gamma f$ in $C^{-\infty}$. Hence we may send $n\to \infty$ in the above equality to obtain
    \begin{align*}
	\dprod{f}{g}_{\wtH^{1,\gamma}} = \langle {}^t\L_\gamma \iota_\gamma f, g \rangle, \qquad \forall g\in \dot{C}^\infty(\Dm).
    \end{align*}
    Finally, the assumption $\L_\gamma f\in L^2_\gamma$ gives that the right-side equals $\dprod{\L_\gamma f}{g}_{L^2_\gamma}$, and \eqref{eq:H10_id} follows for $f\in W^2_\gamma$ and $g\in \dot{C}^\infty$.

    Proof of (3). By \eqref{eq:H10_id}, we have the identity
    \begin{align*}
	|\dprod{\L_\gamma f}{g}_{L^2_\gamma}| = |\dprod{f}{g}_{\wtH^{1,\gamma}(\Dm)}| \le \|f\|_{\wtH^{1,\gamma}} \|g\|_{\wtH^{1,\gamma}}, \qquad f\in \A_\gamma, \qquad g\in \wtH^{1,\gamma}_0(\Dm).	
    \end{align*}
    This implies that $\L_\gamma \colon \A_\gamma \to (\wtH^{1,\gamma}_0)'$ is bounded, and that $\|\L_\gamma f\|_{(\wtH^{1,\gamma}_0)'} \le \|f\|_{\wtH^{1,\gamma}}$ for all $f\in \A_\gamma$. Since $\A_\gamma$ is dense in $\wtH^{1,\gamma}$, the latter inequality allows to extend $\L_\gamma$ to $\wtH^{1,\gamma}$ as a bounded $(\wtH^{1,\gamma}_0)'$-valued map.  
\end{proof}

We are now ready to prove Theorem \ref{thm:first}.

\begin{proof}[Proof of Theorem \ref{thm:first}] We first show that the Neumann trace extends into a bounded operator $\neu\colon W_\gamma^2\to H_{(\gamma)}'$. Following ideas in \cite[Sec. 4.4.4]{Helffer2013}, we combine Theorem \ref{thm:qbt1} with the second Green's identity. By virtue of Theorem \ref{thm:qbt1}, let $R\colon H_{(\gamma)} \to \wtH^{1,\gamma}$ be a bounded right-inverse for $\tau_\gamma^D$ arising from a continuous operator $R\colon C^\infty(\Sm^1) \to \A_\gamma$. For $f \in W_\gamma^2$, define the map 
    \begin{align}
	\psi_f (h) := \dprod{\L_{\gamma} f}{Rh}_{L^2_\gamma} - \dprod{f}{Rh}_{\wtH^{1,\gamma}}, \qquad h\in C^\infty(\Sm^1).
	\label{eq:defNeuext}
    \end{align}
    It is easily seen to satisfy an estimate of the form 
    \begin{align*}
	|\psi_f(h)| \le (\|\L_\gamma f\|_{L^2_\gamma} + \|f\|_{\wtH^{1,\gamma}}) \|Rh\|_{\wtH^{1,\gamma}} \le C (\|\L_\gamma f\|_{L^2_\gamma} + \|f\|_{\wtH^{1,\gamma}}) \|h\|_{H_{(\gamma)}},
    \end{align*}
    with $C$ the operator norm of $R$. By Riesz representation, this defines a unique element $\tau_\gamma^N f \in H_{(\gamma)}'$, which further satisfies the estimate $\|\tau_\gamma^N f\|_{H_{(\gamma)}'} \le C (\|\L_\gamma f\|_{L^2_\gamma} + \|f\|_{\wtH^{1,\gamma}})$. Moreover, for $f\in \A_\gamma$ and $h\in C^\infty (\Sm^1)$ so that $Rh\in \A_\gamma$, the first Green's identity \eqref{eq:G1} gives
    \begin{align*}
	\psi_f (h) = \dprod{\tau_\gamma^N f}{\tau_\gamma^D Rh}_{L^2(\Sm^1)} = \dprod{\tau_\gamma^N f}{h}_{L^2(\Sm^1)} = \langle \tau_\gamma^N f, h \rangle_{H'_{(\gamma)}, H_{(\gamma)}},
    \end{align*}
    hence the definition of $\neu$ on $W^2_\gamma$ extends the original definition \eqref{eq:traces} on ${\cal A}_\gamma$.

    On to extending Green's first identity to \eqref{eq:G1ext}, given $g\in \wtH^{1,\gamma}$, we decompose $g = R \dir g + g_0$, with $g_0 \in \wtH^{1,\gamma}_0$, and write, for $f\in W^2_\gamma$, 
    \begin{align*}
	\dprod{\L_\gamma f}{g}_{L^2_\gamma} - \dprod{f}{g}_{\wtH^{1,\gamma}} &=  \dprod{\L_\gamma f}{R \dir g}_{L^2_\gamma} - \dprod{f}{R \dir g}_{\wtH^{1,\gamma}} + \dprod{\L_\gamma f}{g_0}_{L^2_\gamma} - \dprod{f}{g_0}_{\wtH^{1,\gamma}} \\
	&\!\stackrel{\eqref{eq:defNeuext}}{=} \langle \tau_\gamma^N f, \tau_\gamma^D g \rangle_{H'_{(\gamma)}, H_{(\gamma)}} + \dprod{\L_\gamma f}{g_0}_{L^2_\gamma} - \dprod{f}{g_0}_{\wtH^{1,\gamma}},
    \end{align*}
    and the last two terms cancel out by virtue of \eqref{eq:H10_id}. Theorem \ref{thm:first} is proved. 
\end{proof}

\begin{remark} Similarly to \cite[Remark 8.2.5]{Behrndt2020}, the extension of $\neu$ could be made to the larger space $\widetilde{W}^{2}_\gamma = \{u\in \wtH^{1,\gamma}(\Dm),\ \L_\gamma u\in (\wtH^{1,\gamma}(\Dm))'\}$ into a bounded operator $\neu\colon \widetilde{W}^{2}_\gamma \to H_{(\gamma)}'$, and one would have a slightly more general first Green's identity
    \begin{align*}
	\langle \L_\gamma f, g\rangle_{(\wtH^{1,\gamma})', \wtH^{1,\gamma}} = \dprod{f}{g}_{\wtH^{1,\gamma}} + \langle \neu f, \dir g\rangle_{H'_{(\gamma)}, H_{(\gamma)}}, \qquad f\in \widetilde{W}^2_\gamma, \quad g\in \wtH^{1,\gamma}. 
    \end{align*}
    Upon skew-symmetrizing, Green's second identity generalizes to $f,g\in \widetilde{W}^2_\gamma$ in the obvious way.
\end{remark}

\subsection{DN map - Proof of Theorem \ref{thm:DNmap}} \label{sec:DNmap}

We end this section with the construction of the Dirichlet-to-Neumann map, i.e. the proof of Theorem \ref{thm:DNmap}.

\begin{proof}[Proof of Theorem \ref{thm:DNmap}] The construction of the DN map is done as follows. Consider $R_{\gamma}^D \colon H_{(\gamma)}\to \wtH^{1,\gamma}(\Dm)$ a continuous right inverse to $\tau_\gamma^D$ as in Theorem \ref{thm:qbt1}. For $f\in H_{(\gamma)}$, the construction of a unique solution $u_f\in W_\gamma^2 \cap \ker (\L_{\gamma}-\lambda)$ to 
    \begin{align}
	(\L_\gamma - \lambda)u = 0 \quad (\Dm), \qquad u|_{\Sm^1} = f 
	\label{eq:upb}
    \end{align}
    is done as follows: Setting $\tilde f = R_{\gamma}^D f \in \wtH^{1,\gamma} (\Dm)$, write \response{$u_f = w + \tilde f$} for some unknown $w$, which in turn should solve
    \begin{align}
	(\L_\gamma -\lambda) w = - (\L_\gamma - \lambda) \tilde f \qquad (\Dm), \qquad w|_{\Sm^1} = 0,
	\label{eq:wpb}
    \end{align}
    where the right-hand side belongs to $(\wtH^{1,\gamma}_0(\Dm))'$ by Lemma \ref{lem:H1spaces}.(3), a space which coincides with $(\wtH^{1,\gamma}_D(\Dm))'=\wtH^{-1,\gamma}_D(\Dm)$ by Lemma \ref{lem:H1spaces}.(1). By setting up a weak formulation and invoking Riesz Representation Theorem on $\wtH^{1,\gamma}_D$, or simply using $(\L_{\gamma,D}-\lambda)^{-1}$ which is well-understood, this gives a unique solution to \eqref{eq:wpb} in $\wtH^{1,\gamma}_D$, given by 
    \begin{align*}
	w := - (\L_{\gamma,D}-\lambda)^{-1}(\L_\gamma - \lambda) \tilde f.	    
    \end{align*}
    Then \response{$u_f = w+\tilde f$} is a solution to \eqref{eq:upb} (which can also be proved to be unique since $\L_{\gamma,D}-\lambda$ is injective). Originally, it belongs to $\wtH^{1,\gamma}$, so in particular in $L^2_\gamma$, so \eqref{eq:upb} also implies that \response{$\L_\gamma u_f \in L^2_\gamma$}, and hence \response{$u_f\in W_\gamma^2\cap \ker (\L_{\gamma}-\lambda)$}.

    More succinctly, the DN map is then defined by 
    \begin{align*}
	\Lambda_\gamma (\lambda) f := \tau_\gamma^N u_f = \tau_\gamma^N (id - (\L_{\gamma,D}-\lambda)^{-1} (\L_{\gamma}-\lambda)) R_\gamma^D f,
    \end{align*}
    which boundedly lands into $H_{(\gamma)}'$ by Theorem \ref{thm:first}. The uniqueness of the solution to \eqref{eq:upb} makes it independent of the choice of right-inverse for $\tau_\gamma^D$.
\end{proof}

\section{Proof of Theorem \ref{thm:second}} \label{sec:proofsecond}

The idea is to follow the template of \cite[Theorem 8.4.1 p601]{Behrndt2020} to construct boundary triples for $\L_{\gamma,max}$ when $\gamma\in (-1,1)$. In proving Theorem \ref{thm:second}, we first prove in Section \ref{sec:tauNext} how to extend the Neumann trace on the Dirichlet Sobolev scale \eqref{eq:DSob}, this is formulated as Proposition \ref{prop:tauND} below, and makes crucial use of facts about generalized Zernike polynomials. In Section \ref{sec:tauDtilde}, we then show how to extend the Dirichlet trace, a result formulated in Proposition \ref{prop:tauDext}. Finally, we complete the proof of Theorem \ref{thm:second} in Section \ref{sec:G2ext}, first proving Lemma \ref{lem:directsum} on the decompositions of the maximal domain, then using these decompositions to extend Green's second identity to \eqref{eq:G2extmax}.

\subsection{Extension of the Neumann trace}\label{sec:tauNext}

\begin{proposition}\label{prop:tauND}
    For $\gamma\in (-1,1)$ and any $s>1+|\gamma|$, the map $\neu$ defined in \eqref{eq:traces} extends to a bounded, surjective map
    \begin{align}
	\tau^N_\gamma \colon \wtH^{s,\gamma}_D(\Dm)\to H^{s-1-|\gamma|}(\Sm^1),
	\label{eq:tauND}
    \end{align}
    with a right inverse arising as the extension to \response{$H^{s-1-|\gamma|} (\Sm^1)$} of a continuous operator $C^\infty(\Sm^1)\to C^\infty(\Dm)$. In particular, for $s=2$, where $\wtH^{2,\gamma}_D(\Dm) = \dom(\L_{\gamma,D})$
    \begin{align}
	\tau_\gamma^N \colon \dom(\L_{\gamma,D}) \to H^{1-|\gamma|}(\Sm^1)
	\label{eq:tau2ND}
    \end{align}
    is a bounded and surjective map, and we have the further characterization
    \begin{align}
	\dom(\L_{\gamma,min}) = \wtH^{2,\gamma}_D \cap \ker \neu.
	\label{eq:Lmin}
    \end{align}
\end{proposition}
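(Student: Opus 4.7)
My plan leverages the explicit spectral decomposition of $\L_{\gamma,D}$ in the generalized Zernike basis provided by Lemma \ref{lem:Dcharac}. Write $\{\hat e_{n,k}\}_{0\le k\le n}$ for the $L^2_\gamma$-normalized eigenbasis, equal to $G^\gamma_{n,k}/\|G^\gamma_{n,k}\|_{L^2_\gamma}$ for $\gamma\ge 0$ and to $x^{-\gamma}G^{-\gamma}_{n,k}/\|G^{-\gamma}_{n,k}\|_{L^2_{-\gamma}}$ for $\gamma<0$. Since each eigenfunction lies in $\A_\gamma$ with trivial singular component (no $x^{-\gamma}$ part for $\gamma>0$, no $\log x$ part for $\gamma=0$, and the $\gamma<0$ case being an $x^{-\gamma}$ times smooth), formula \eqref{eq:tracescomp} combined with the boundary identity $G^{|\gamma|}_{n,k}|_{\Sm^1}(e^{i\omega})=e^{i(n-2k)\omega}$ gives
\[\neu \hat e_{n,k} = c_\gamma\, \|G^{|\gamma|}_{n,k}\|^{-1}_{L^2_{|\gamma|}}\, e^{i(n-2k)\omega},\]
for a nonzero constant $c_\gamma$ ($=2|\gamma|$ when $\gamma\ne 0$, $=-2$ when $\gamma=0$). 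Crucially, $\neu$ only couples basis elements with the same angular frequency $m:=n-2k$.

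For the boundedness of \eqref{eq:tauND}, expand $f=\sum_{n,k}a_{n,k}\hat e_{n,k}\in\wtH^{s,\gamma}_D$: the $m$-th Fourier coefficient of $\neu f$ is $c_\gamma \sum_{n-2k=m}a_{n,k}\|G^{|\gamma|}_{n,k}\|^{-1}_{L^2_{|\gamma|}}$, so by Cauchy--Schwarz
\[|\widehat{\neu f}(m)|^2 \le |c_\gamma|^2\,\Bigl(\sum_{n-2k=m}(n+1+|\gamma|)^{2s}|a_{n,k}|^2\Bigr)\,S_m, \quad S_m:=\sum_{n-2k=m}\frac{(n+1+|\gamma|)^{-2s}}{\|G^{|\gamma|}_{n,k}\|^2_{L^2_{|\gamma|}}}.\]
Setting $n=2k+m$ and using the explicit Zernike norm formula (already exploited in the proof of Theorem \ref{th:minsa}), a Riemann-sum estimate in $k$ (splitting into the regimes $k\sim|m|$ and $k\gg|m|$) shows that $S_m$ converges precisely when $s>1+|\gamma|$ and obeys $S_m\lesssim\langle m\rangle^{2(1+|\gamma|-s)}$ uniformly in $m$. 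Multiplying through by $\langle m\rangle^{2(s-1-|\gamma|)}$ and summing over $m$ yields $\|\neu f\|_{H^{s-1-|\gamma|}(\Sm^1)}\lesssim\|f\|_{\wtH^{s,\gamma}_D}$.

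For surjectivity, I construct a right inverse in the spirit of Section \ref{sec:Dext}. Fix $g\in C_c^\infty([0,1))$ with $g(0)=1$; for $\gamma\in[0,1)$ and $h=\sum_m h_m e^{im\omega}\in C^\infty(\Sm^1)$, set
\[Rh := \frac{1}{c_\gamma}\sum_{m\in\Zm} h_m\, g(\langle m\rangle^2 x)\, e^{im\omega}.\]
The identity $\neu Rh=h$ follows directly from \eqref{eq:tracescomp} mode-by-mode, and Schwartz decay of $\{h_m\}$ gives continuity $R\colon C^\infty(\Sm^1)\to C^\infty(\Dm)$. Boundedness $H^{s-1-|\gamma|}(\Sm^1)\to\wtH^{s,\gamma}_D$ is verified mode-by-mode using orthogonality of angular frequencies: the exponent $2$ in $\langle m\rangle^2 x$ is designed so that $x\partial_\rho^2$ and $\rho^{-2}\partial_\omega^2$ in $\L_\gamma$ contribute comparable pointwise amplitude $O(\langle m\rangle^2)$ on the support of $g(\langle m\rangle^2 x)$ (which has width $\sim\langle m\rangle^{-2}$), and integration against $x^\gamma$ on this support yields $\|g(\langle m\rangle^2 x)e^{im\omega}\|^2_{\wtH^{2,\gamma}_D}\sim\langle m\rangle^{2(1-|\gamma|)}$; iterating $\L_\gamma$ gives the analogous bound for integer $s$, with interpolation covering non-integer $s$. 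For $\gamma\in(-1,0)$, the intertwining \eqref{eq:interxgamma} reduces matters to $-\gamma>0$ and produces a right inverse into $x^{-\gamma}C^\infty(\Dm)\subset\A_\gamma$ (mildly deviating from the statement's $C^\infty(\Dm)$ target, which is incompatible with the fact, immediate from \eqref{eq:tracescomp}, that $\neu$ vanishes on $C^\infty(\Dm)$ when $\gamma<0$).

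Finally, for the characterization \eqref{eq:Lmin}, the inclusion $\dom(\L_{\gamma,min})\subset\wtH^{2,\gamma}_D\cap\ker\neu$ is immediate from continuity of $\neu$ and its vanishing on $\dot{C}^\infty(\Dm)$. For the reverse, given $f\in\wtH^{2,\gamma}_D\cap\ker\neu$, truncate its Zernike expansion to $n\le j$ and, for each angular frequency $m$ with $|m|\le j$, add a minimum-norm correction supported on Zernike modes with $n-2k=m,\,n\le j$ whose $\neu$-value cancels the tail error $\sum_{n-2k=m,\,n>j}a_{n,k}\|G^{|\gamma|}_{n,k}\|^{-1}_{L^2_{|\gamma|}}$. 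Cauchy--Schwarz combined with the already-derived asymptotics of $S_m$ (specialized to $s=2$, which satisfies the threshold $s>1+|\gamma|$ precisely thanks to $|\gamma|<1$) bounds the total $\wtH^{2,\gamma}_D$-norm of the correction by the tail of $\|f\|^2_{\wtH^{2,\gamma}_D}$, which vanishes as $j\to\infty$. Each truncated $f^{(j)}$ is then a finite polynomial combination with zero Neumann trace, hence of the form $xC^\infty(\Dm)$ (or $x\cdot x^{-\gamma}C^\infty(\Dm)$ for $\gamma<0$), and by Theorem \ref{thm:densities}.(ii) belongs to $\overline{\dot{C}^\infty(\Dm)}^{\ngamma}=\dom(\L_{\gamma,min})$. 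The main technical obstacle throughout is the uniform-in-$m$ control of $S_m$, requiring the exact Zernike norm formula and careful bookkeeping across the transition $k\sim|m|$ versus $k\gg|m|$.
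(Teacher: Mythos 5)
Your boundedness argument for \eqref{eq:tauND} is essentially identical to the paper's: Cauchy--Schwarz in the Zernike expansion reduces everything to the uniform bound $S_m\lesssim\langle m\rangle^{2(1+|\gamma|-s)}$, which is precisely the upper bound of the paper's Lemma \ref{lem:Cmbounds} (the lower bound is not needed for this step). Your remark that the stated target $C^\infty(\Dm)$ for the right inverse must be read as $x^{-\gamma}C^\infty(\Dm)$ when $\gamma<0$ is correct and matches what the paper's proof actually produces via the intertwining $m_{x^\gamma}$. For surjectivity, however, you take a genuinely different route: a boundary-layer right inverse $h_m\mapsto c_\gamma^{-1}h_m\,g(\langle m\rangle^2x)e^{im\omega}$, in the style of the paper's right inverse for the \emph{Dirichlet} trace on $\wtH^{1,\gamma}$. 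The paper instead sends $e^{im\omega}$ to the average of the $1+|m|$ lowest-degree Zernike polynomials with boundary frequency $m$, which has the advantage that the spectrally defined $\wtH^{s,\gamma}_D$-norm of the output is computed exactly for every real $s>1+|\gamma|$ in one stroke. Your construction instead needs the scaling analysis of the iterates $\L_\gamma^j(g(\langle m\rangle^2x)e^{im\omega})$ at even integer $s$ plus an interpolation step; to reach the range $s\in(1+|\gamma|,2)$ you must interpolate between $s=0$ and $s=2$, which you should state explicitly. Both constructions work.

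The one step that does not go through as written is the reverse inclusion in \eqref{eq:Lmin}. Your ``minimum-norm correction supported on Zernike modes with $n-2k=m$, $n\le j$'' has squared $\wtH^{2,\gamma}_D$-norm $|\epsilon_m^{(j)}|^2/S_m^{(\le j)}$, where $S_m^{(\le j)}$ denotes the partial sum of $S_m$ over degrees $n\le j$. But the lower bound $S_m\gtrsim\langle m\rangle^{2(|\gamma|-1)}$ comes entirely from the modes with $n\gtrsim 3|m|$ (this is exactly how the lower bound in Lemma \ref{lem:Cmbounds} is obtained, by discarding $\ell<m$), so for $|m|$ comparable to $j$ the denominator $S_m^{(\le j)}$ is far smaller than $S_m$ and the correction is \emph{not} controlled by the tail of $\|f\|^2_{\wtH^{2,\gamma}_D}$; for instance at $|m|=j$ only the single mode $(n,k)=(j,0)$ is available. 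The repair is immediate and is what the paper does: take any sequence $f_n\in C^\infty(\Dm)$ (resp.\ $x^{-\gamma}C^\infty(\Dm)$ for $\gamma<0$) converging to $f$ in $\wtH^{2,\gamma}_D$, and set $g_n=f_n-R\tau_\gamma^N f_n$ with $R$ the bounded right inverse already constructed; since $\tau_\gamma^N f_n\to\tau_\gamma^N f=0$ in $H^{1-|\gamma|}(\Sm^1)$, one gets $g_n\to f$ in $\wtH^{2,\gamma}_D$ with $g_n\in xC^\infty(\Dm)$ (resp.\ $x^{1-\gamma}C^\infty(\Dm)$), and Theorem \ref{thm:densities}.(ii) concludes. (Alternatively, restrict the corrected frequencies to $|m|\le j/C$ for a fixed large $C$.) Your forward inclusion, via continuity of $\tau_\gamma^N$ on $\wtH^{2,\gamma}_D$ and its vanishing on $\dot{C}^\infty(\Dm)$, is fine and is a mild shortcut compared with the paper's Green's-identity argument, provided you first note, as the paper does, that $\dom(\L_{\gamma,min})\subset\wtH^{2,\gamma}_D$ because $\L_{\gamma,D}$ is a closed extension of the preminimal operator.
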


\begin{proof} (Case $0\le \gamma < 1$) \response{With $\{G_{n,k}^\gamma\}_{n,k}$ defined in \eqref{eq:Gnkgamma}}, the main two fundamental properties needed are
    \begin{align}
	\tau_\gamma^N (\response{G_{n,k}^{\gamma}}) &= c_\gamma \response{G_{n,k}^{\gamma}}|_{\Sm^1} = c_\gamma e^{i(n-2k)\beta} \label{eq:ppty1}\\
	\|\response{G_{n,k}^{\gamma}}\|^2_{L^2_{\gamma}} &= \frac{\pi}{n+\gamma+1} \frac{(n-k)!\gamma!k!\gamma!}{(k+\gamma)!(n-k+\gamma)!} =: (n_{n,k}^{\gamma})^2, \label{eq:ppty2}
    \end{align} 
    where we write for short $x! := \Gamma(x+1)$, and where $c_\gamma = 2\gamma$ for $\gamma\in (0,1)$, and $c_0 = -2$. The first equality uses \cite[Eq (2.10)]{Wuensche2005}. Hence, for $u\in \wtH_D^{s,\gamma}$\response{, of the form $u = \sum_{n=0}^\infty \sum_{k=0}^n u_{n,k} \frac{G_{n,k}^\gamma}{n_{n,k}^\gamma}$ with $\|u\|^2_{\wtH_D^{s,\gamma}}=\sum_{n=0}^\infty \sum_{k=0}^n (n+1+\gamma)^{2s} |u_{n,k}|^2 < \infty$}, we have 
    \begin{align*}
	\tau_\gamma^N u = c_\gamma \sum_{n,k} \frac{u_{n,k}}{n_{n,k}^\gamma} e^{i(n-2k)\beta} = c_\gamma \sum_{m\in \Zm} e^{im\beta} [\tau_\gamma^N u]_m, \qquad [\tau_\gamma^N u]_m := \sum_{n-2k=m} \frac{u_{n,k}}{n_{n,k}^{\gamma}}. 
    \end{align*}

    Then 
    \begin{align}
	\|\tau_\gamma^N u\|_{H^{s-\gamma-1}}^2 &= 2\pi c_\gamma^2  \sum_{m\in \Zm} \langle m\rangle^{2s-2\gamma-2} \left|[\tau_\gamma^N u]_m\right|^2 \nonumber	\\
	&\le 2\pi c_\gamma^2  \sum_{m\in \Zm} \langle m\rangle^{2s-2\gamma-2} \left(\sum_{n-2k =m} (n+1+\gamma)^{2s} |u_{n,k}|^2 \right) \left( \sum_{n-2k =m}  \frac{(n+1+\gamma)^{-2s}}{(n_{n,k}^{\gamma})^2} \right)
	\label{eq:tau_n-cs}
    \end{align}
    and the proof is complete if we can show that $C_{m; s,\gamma} := \langle m\rangle^{2s-2\gamma-2}\sum_{n-2k =m}  \frac{(n+1+\gamma)^{-2s}}{(n_{n,k}^{\gamma})^2} $ is uniformly bounded for all $m\in \Zm$. Note that $C_{m; s,\gamma} = C_{-m;s,\gamma}$ so it enough to check it for $m\ge 0$, for which we have
    \begin{align*}
	C_{m; s,\gamma} &=  \langle m\rangle^{2s-2\gamma-2} \sum_{\ell\ge 0} \frac{(m+2\ell + 1+\gamma)^{-2s}}{(n_{m+2\ell,\ell}^{\gamma})^2} \\	
	&=  \frac{\langle m\rangle^{2s-2\gamma-2}}{\pi (\gamma!)^2} \sum_{\ell\ge 0} (m+2\ell + 1+\gamma)^{1-2s} \frac{(m+\ell+\gamma)!}{(m+\ell)!} \frac{(\ell+\gamma)!}{\ell!}.
    \end{align*}

    To this end, we state the following estimate, which is relegated to Appendix \ref{sec:lemCmbounds}: 
    \begin{lemma}\label{lem:Cmbounds}
	There exist $C',C''>0$ such that $C'\le C_{m; s,\gamma} \le C''$ for all $m\ge 0$.
    \end{lemma}
    Using Lemma \ref{lem:Cmbounds}, we thus see that
    \begin{align*}
	\|\tau_\gamma^N u\|_{H^{s-\gamma-1}}^2 \le 2\pi c_\gamma^2 \sum_{m\in \Zm} C_{m; s,\gamma} \left(\sum_{n-2k =m} (n+1+\gamma)^{2s} |u_{n,k}|^2 \right) \le 2\pi c_\gamma^2 \left(\sup_{m\in\Zm}{C_{m; s,\gamma}}\right)\|u\|_{\wtH^{s,\gamma}_D(\Dm)}^2,	
    \end{align*}
    thus establishing the forward trace estimate.
    	
    To establish surjectivity, it suffices to construct a bounded right inverse $R:H^{s-\gamma-1}(\Sm^1)\to\wtH^{s,\gamma}_D(\Dm)$. We construct the following map: 
    \begin{align}
	R\left(\sum_{m\in\Zm}{a_me^{im\omega}}\right) = \frac{1}{c_\gamma} \sum_{m\in\Zm}{\frac{a_m}{1+|m|}\sum_{n-2k=m, n\le 3|m|}{ \response{G_{n,k}^\gamma}}}.
	\label{eq:Rneu}
    \end{align}
    Note that the set $\{(n,k)\,:\,n-2k=m,\ 0\le k\le n\}$ can be parametrized by
    \[n=|m|+2\ell ,\quad k = \frac{|m|-m}{2}+\ell = \max(0,-m) + \ell, \quad \ell\ge 0,\]
    so the inner sum is over $1+|m|$ elements. Specifically, $R$ sends $e^{im\omega}$ to the average over the $|m|+1$ weighted Zernike polynomials of lowest degree whose trace equals $e^{im\omega}$. In particular, that $R$ is a right-inverse directly follows from property \eqref{eq:ppty1}. 

    Moreover we have
    \begin{align*}
	\left\| R\left(\sum_{m\in\Zm}{a_me^{im\omega}}\right)\right\|_{\wtH^{s,\gamma}_D(\Dm)}^2 &=  \frac{1}{c_\gamma^2} \sum_{m\in\Zm}{\frac{|a_m|^2}{(1+|m|^2)}\sum_{n-2k=m,n\le 3|m|}{ (n+1+\gamma)^{2s}(n_{n,k}^{\gamma})^2}} \\
	&=  \frac{1}{c_\gamma^2} \sum_{m\in\Zm}{\langle m\rangle^{2s-2\gamma-2}|a_m|^2\frac{\langle m\rangle^2}{(1+|m|)^2}C_m'} \\
	&\le \left(\sup_{m\in\Zm}{\frac{\langle m\rangle^2C_m'}{c_\gamma^2(1+|m|)^2}}\right) \left\|\sum_{m\in \Zm} a_m e^{im\omega}\right\|^2_{H^{s-1-\gamma}},
    \end{align*}
    where we have defined 
    \[C_m' := \langle m\rangle^{-2s+2\gamma}\sum_{n-2k=m,n\le 3|m|}{(n+1+\gamma)^{2s}(n_{n,k}^{\gamma})^2}.\]
    Thus, that $R\colon H^{s-1-|\gamma|}(\Sm^1)\to \wtH^{s,\gamma}_D (\Dm)$ is bounded will follow upon showing that $C_m'$ is uniformly bounded in $m$. This is implied by the following asymptotics
    \begin{align}
	\sum_{n-2k=m,n\le 3|m|}{(n+1+\gamma)^{2s}(n_{n,k}^{\gamma})^2} = O(\langle m\rangle^{2s-2\gamma}), \qquad \text{as}\quad |m|\to \infty,
	\label{eq:Cmasymp}
    \end{align}
    which we prove now. As before, $C_{-m}' = C_m'$, so it suffices to assume $m\ge 0$, in which case we parametrize $n = m+2\ell$ and $k=\ell$, with $0\le\ell\le m$. Similarly as before we assume $m$ is sufficiently large, say $m>\gamma$. Then the sum then becomes
    \begin{align*}
	\sum_{n-2k=m,n\le 3|m|}{(n+1+\gamma)^{2s}(n_{n,k}^{\gamma})^2} \stackrel{\eqref{eq:ppty2}}{=} \frac{\pi}{(\gamma!)^2}\sum_{\ell=0}^m{(m+2\ell+1+\gamma)^{2s-1}\frac{(m+\ell)!}{(m+\ell+\gamma)!}\frac{\ell!}{(\ell+\gamma)!}}.	
    \end{align*}
    Using the asymptotic $\lim_{x\to\infty}{\frac{x!}{(x+\gamma)!(x+1)^{-\gamma}}} = 1$, and using that $m+2\ell+1+\gamma\le 4(m+1)$ and $2s-1\ge 1+2\gamma\ge 1$, the above sum is bounded above by some multiple of 
    \begin{align*}
	\sum_{\ell=0}^m{(m+2\ell+1+\gamma)^{2s-1}(m+\ell+1)^{-\gamma}(\ell+1)^{-\gamma}} \lesssim (m+1)^{2s-\gamma-1}\sum_{\ell=0}^m{(\ell+1)^{-\gamma}}.
    \end{align*}
    Finally, the function $x\mapsto (x+1)^{\gamma}$ is decreasing, so we can estimate
    \[\sum_{\ell=0}^m{(\ell+1)^{-\gamma}} = 1+\sum_{\ell=1}^m{(\ell+1)^{-\gamma}}\le 1+\int_0^m{(x+1)^{-\gamma}\,dx} =1+\frac{(m+1)^{1-\gamma}-1}{1-\gamma}.\]
    It follows that
    \[\sum_{n-2k=m,n\le 3|m|}{(n+1+\gamma)^{2s}(n_{n,k}^{\gamma})^2} \lesssim (m+1)^{2s-\gamma-1}\sum_{\ell=0}^m{(\ell+1)^{\gamma}}\lesssim (m+1)^{2s-2\gamma},\]
    i.e. \eqref{eq:Cmasymp} follows.
   
    (Case $\gamma<0$) Notice that the map $m_{x^\gamma}\colon x^{-\gamma}C^\infty(\Dm) \ni f\mapsto x^\gamma f\in C^\infty(\Dm)$ extends into an isometry $\wtH^{s,\gamma}_D \mapsto \wtH_D^{s,-\gamma}$, and that we have $\tau_\gamma^N = \tau_{-\gamma}^N \circ m_{x^\gamma}$. The result for $\gamma\in (-1,0)$ follows.  

    Finally, we prove the characterization \eqref{eq:Lmin} of the minimal domain. The second Green's identity \eqref{eq:G2} extends, for $f\in\wtH^{2,\gamma}_D(\Dm)$, to the equation
    \begin{equation}
	\label{eq:G2D}
	(\L_\gamma f,g)_{L^2_\gamma}-(f,\L_\gamma g)_{L^2_\gamma} = (\tau^N_\gamma f,\tau^D_\gamma g)_{L^2(\Sm^1)},\quad g\in\A_\gamma.
    \end{equation}
    Since $\wtH^{2,\gamma}_D(\Dm)$ is the domain of the Dirichlet realization $\L_{\gamma,D}$, it follows the minimal domain is contained in $\wtH^{2,\gamma}_D(\Dm)$. As such, let $f\in\text{dom }(\L_{\gamma,min})$; then $f\in \wtH^{2,\gamma}_D(\Dm)$. Moreover, since $\L_{\gamma,min} = (\L_{\gamma,max})^*$, it follows that for all $g\in \L_{\gamma,max}$ we have
    \[(\L_\gamma f,g)_{L^2_\gamma}-(f,\L_\gamma g)_{L^2_\gamma} = 0.\]
    In particular this holds for all $g\in\A_\gamma$, in which case this combined with \eqref{eq:G2D} gives
    \[(\tau^N_\gamma f,\tau^D_\gamma g)_{L^2(\Sm^1)} = 0\text{ for all }g\in\A_\gamma,\]
    i.e. $\tau^N_\gamma f$ is orthogonal (in $L^2(\Sm^1)$) to $\dir(\A_\gamma) = C^\infty(\Sm^1)$. By density of the latter, this forces $\tau^N_\gamma f\equiv 0$. It follows that $\text{dom }(\L_{\gamma,\min})\subset \wtH^{2,\gamma}_D(\Dm)\cap \ker\tau^N_\gamma$.
    
    To show the other inclusion, we will use density arguments. First considering the case $\gamma\in[0,1)$, first notice that by Theorem \ref{thm:densities}.(ii), we have $xC^\infty (\Dm) \subset \overline{C_c^\infty(\Dm)}^{n_\gamma} =\text{dom}(\L_{\gamma,min})$, and upon taking $n_\gamma$-closures, $\overline{xC^\infty(\Dm)}^{n_\gamma} \subset \text{dom}(\L_{\gamma,min})$. Then $\wtH^{2,\gamma}_D(\Dm)\cap \ker\tau^N_\gamma\subset \text{dom}(\L_{\gamma,min})$ will hold provided that 
    \begin{align}
        \wtH^{2,\gamma}_D(\Dm)\cap \ker\tau^N_\gamma= \overline{xC^\infty(\Dm)}^{n_\gamma}.
    \label{eq:firsteq}
    \end{align}
   The inclusion $\supset$ in \eqref{eq:firsteq} is clear since the space on the left is $n_\gamma$-closed and contains $xC^\infty(\Dm)$. On to the inclusion $\subset$, for any $f\in \wtH^{2,\gamma}_D(\Dm)$, there is a sequence $f_n\in C^\infty(\Dm)$ with $f_n\to f$ in $\wtH^{2,\gamma}_D(\Dm)$. If $f$ also satisfies $\tau^N_\gamma f\equiv 0$, then $\tau^N_\gamma f_n\to 0$ in $H^{1-|\gamma|}(\Sm^1)$. With $R$ the right inverse for $\tau_\gamma^N$ defined in \eqref{eq:Rneu}, $R\tau_\gamma^N f_n\in C^\infty(\Sm^1)$ and $R\tau_\gamma^N f_n \to 0$ in $\wtH^{2,\gamma}_D(\Dm)$. Considering 
    \[g_n = f_n - R\tau^N_\gamma f_n \in C^\infty(\Dm),\]
    since $\tau^N_\gamma g_n = \tau^N_\gamma f_n - \tau^N_\gamma R\tau^N_\gamma f_n = 0$, $g_n$ in fact belongs to $xC^\infty(\Dm)$ and converges to $f$ in $\wtH^{2,\gamma}_D(\Dm)$. This gives \eqref{eq:firsteq}. 

For $\gamma\in(-1,0)$, we follow a similar strategy to show that $\wtH^{2,\gamma}_D(\Dm)\cap \ker\tau^N_\gamma = \overline{x^{1-\gamma}C^\infty(\Dm)}^{n_\gamma}$, and noting that $C_c^\infty(\Dm)$ is $n_\gamma$-dense in $x^{1-\gamma}C^\infty(\Dm)$ for all $\gamma\in(-1,0)$ (in fact for all $\gamma<1$).    
\end{proof}

From this, we can also show:
\begin{lemma}
    \label{lem:agammadense}
    For $\gamma\in(-1,1)$,  we have $\dom(\L_{\gamma,max}) = \overline{\A_\gamma}^{n_\gamma}$. 
\end{lemma}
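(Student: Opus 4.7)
The inclusion $\overline{\A_\gamma}^{n_\gamma} \subset \dom(\L_{\gamma,max})$ is immediate, since $\A_\gamma \subset \dom(\L_{\gamma,max})$ (from $\A_\gamma \subset L^2_\gamma$, valid for $|\gamma|<1$, combined with $\L_\gamma \A_\gamma \subset \A_\gamma$ via \eqref{eq:selftranspose}) and $\L_{\gamma,max}$ is closed. For the reverse, the plan is to introduce the closed operator $T := \overline{\L_\gamma|_{\A_\gamma}}$, whose domain is precisely $\overline{\A_\gamma}^{n_\gamma}$, and which satisfies $\L_{\gamma,min} \subset T \subset \L_{\gamma,max}$ (the first inclusion from $\dot{C}^\infty(\Dm) \subset \A_\gamma$, the second from closedness of $\L_{\gamma,max}$). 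Taking adjoints reverses these inclusions, and since $T$ is closed so $T = T^{**}$, the equality $T = \L_{\gamma,max}$ will follow once I show $T^* = \L_{\gamma,min}$, i.e., every $g \in \dom(T^*)$ lies in $\dom(\L_{\gamma,min})$.

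Given such $g$, one has $g \in \dom(\L_{\gamma,max})$ together with $\dprod{\L_\gamma f}{g}_{L^2_\gamma} = \dprod{f}{\L_\gamma g}_{L^2_\gamma}$ for every $f \in \A_\gamma$. I would then invoke the extended Green's second identity \eqref{eq:G2extmax} (using that $\tilde\tau_\gamma^D$ agrees with $\tau_\gamma^D$ on $\A_\gamma$) to rewrite this as the boundary identity
\begin{align*}
    \langle \tilde\tau_\gamma^D g, \tau_\gamma^N f_D \rangle_{H^{-1+|\gamma|},H^{1-|\gamma|}} = \langle \tau_\gamma^D f, \tau_\gamma^N g_D \rangle_{H^{-1+|\gamma|},H^{1-|\gamma|}}, \qquad \forall f \in \A_\gamma.
\end{align*}
Specializing first to $f \in \A_{\gamma,D}$ — where $\tau_\gamma^D f = 0$ and, by the density results of Section \ref{sec:density}, $f \in \dom(\L_{\gamma,D})$, hence $f_D = f$ — collapses the identity to $\langle \tilde\tau_\gamma^D g, \tau_\gamma^N f \rangle = 0$. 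A direct inspection of \eqref{eq:tracescomp} shows $\tau_\gamma^N(\A_{\gamma,D}) = C^\infty(\Sm^1)$, dense in $H^{1-|\gamma|}(\Sm^1)$, which forces $\tilde\tau_\gamma^D g = 0$; together with Lemma \ref{lem:directsum} and the bijectivity of $\tilde\tau_\gamma^D$ on $\fN_\lambda(\L_{\gamma,max})$, this yields $g \in \dom(\L_{\gamma,D})$ and $g_D = g$. Testing the identity against general $f \in \A_\gamma$ then reduces to $\langle \tau_\gamma^D f, \tau_\gamma^N g \rangle = 0$, and the surjectivity $\tau_\gamma^D(\A_\gamma) = C^\infty(\Sm^1)$ delivers $\tau_\gamma^N g = 0$. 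The characterization \eqref{eq:Lmin} of $\dom(\L_{\gamma,min})$ now concludes $g \in \dom(\L_{\gamma,min})$.

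The key leverage in this argument is the availability of the fully extended Green's identity \eqref{eq:G2extmax} on the maximal domain, which converts the abstract adjoint condition into the vanishing of two explicit boundary traces. The principal technical point I expect to verify with care is the characterization $\dom(\L_{\gamma,D}) = \ker \tilde\tau_\gamma^D \cap \dom(\L_{\gamma,max})$ — not stated as a standalone result but implicit in Lemma \ref{lem:directsum} combined with the bijectivity of $\tilde\tau_\gamma^D$ on $\fN_\lambda(\L_{\gamma,max})$ used in the proof of Theorem \ref{thm:DNmap} — along with the inclusions $\A_{\gamma,D} \subset \dom(\L_{\gamma,D})$ and the range computations for $\tau_\gamma^D$, $\tau_\gamma^N$; these follow from routine bookkeeping with \eqref{eq:tracescomp} and the density results of Section \ref{sec:density}.
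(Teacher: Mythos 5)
Your proposal is correct in outline and shares the paper's overall skeleton: both arguments reduce the claim, via double adjoints, to showing that the adjoint of $\L_\gamma|_{\A_\gamma}$ has domain contained in $\dom(\L_{\gamma,min})$, and both finish by testing against $\A_\gamma$ and invoking the characterization \eqref{eq:Lmin}. The difference lies in how you place an element $g\in\dom((\L_\gamma|_{\A_\gamma})^*)$ inside $\wtH^{2,\gamma}_D(\Dm)$. The paper gets this in one line from operator inclusions: since $C^\infty(\Dm)$ (resp.\ $x^{-\gamma}C^\infty(\Dm)$) is a subset of $\A_\gamma$ whose $n_\gamma$-closure is $\wtH^{2,\gamma}_D(\Dm)$, taking adjoints immediately gives $\dom((\L_\gamma|_{\A_\gamma})^*)\subset\dom(\L_{\gamma,D}^*)=\wtH^{2,\gamma}_D(\Dm)$; then the simpler identity $(\L_\gamma f,g)_{L^2_\gamma}-(f,\L_\gamma g)_{L^2_\gamma}=(\tau^N_\gamma f,\tau^D_\gamma g)_{L^2(\Sm^1)}$ for $f\in\wtH^{2,\gamma}_D$, $g\in\A_\gamma$ suffices, and the full machinery of \eqref{eq:G2extmax} is never needed. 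You instead derive $\tilde\tau_\gamma^D g=0$ from \eqref{eq:G2extmax} tested on $\A_{\gamma,D}$ and then conclude $g\in\dom(\L_{\gamma,D})$ from the characterization $\dom(\L_{\gamma,D})=\ker\tilde\tau_\gamma^D\cap\dom(\L_{\gamma,max})$. That characterization is true but is nowhere stated or proved in the paper (the uniqueness argument in the proof of Theorem \ref{thm:DNmap} only covers $W^2_\gamma$, not all of $\fN_\lambda(\L_{\gamma,max})$); you correctly flag it as the point needing care. It can be supplied: if $u\in\fN_\lambda(\L_{\gamma,max})$ with $\tilde\tau_\gamma^D u=0$, then \eqref{eq:BTid} gives $\dprod{u}{\L_\gamma h}_{L^2_\gamma}=\lambda\dprod{u}{h}_{L^2_\gamma}$ for all $h\in\wtH^{2,\gamma}_D$, so by self-adjointness of $\L_{\gamma,D}$ one gets $u\in\dom(\L_{\gamma,D})$ with $(\L_{\gamma,D}-\lambda)u=0$, hence $u=0$. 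With that lemma supplied your argument closes; the paper's route simply avoids the detour.
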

\begin{proof}
    Consider the operator $\L_{\gamma,\A_\gamma}$ whose domain is $\A_\gamma$. It suffices to show that
    \[\text{dom }((\L_{\gamma,\A_\gamma})^*)\subset\text{dom }(\L_{\gamma,min}),\]
    since by taking adjoints we would have
    \[\text{dom }(\L_{\gamma,max}) = \text{dom }((\L_{\gamma,min})^*)\subset \text{dom }((\L_{\gamma,\A_\gamma})^{**}) = \overline{\A_{\gamma}}^{n_\gamma}.\]
    Note that
    \[\wtH^{2,\gamma}_D(\Dm)\subset\overline{\text{dom }(\L_{\gamma,\A_\gamma})}^{n_\gamma}\]
    by taking closures on either $C^\infty(\Dm)$ or $x^{-\gamma}C^\infty(\Dm)$, and hence taking adjoints gives
    \[\text{dom }((\L_{\gamma,\A_\gamma})^*)\subset \wtH^{2,\gamma}_D(\Dm).\]
    As such, suppose $f\in\text{dom }((\L_{\gamma,\A_\gamma})^*)$; then $f\in\wtH^{2,\gamma}_D(\Dm)$. For all $g\in\A_\gamma$, equation \eqref{eq:G2D} gives that
    \[(\L_\gamma f,g)_{L^2_\gamma}-(f,\L_\gamma g)_{L^2_\gamma} = (\tau^N_\gamma f,\tau^D_\gamma g)_{L^2(\Sm^1)}.\]
    But we also have $(f,\L_\gamma g) = (f,(\L_{\gamma,\A_\gamma})g) = ((\L_{\gamma,\A_\gamma})^*f,g) = (\L_\gamma f,g)$, so we have
    \[(\tau^N_\gamma f,\tau^D_\gamma g)_{L^2(\Sm^1)} = 0\text{ for all }g\in\A_\gamma.\]
    Similar reasoning as above yields $\tau^N_\gamma f\equiv 0$, except now that the characterization \eqref{eq:Lmin} has been proven, we can conclude that $f\in\text{dom }(\L_{\gamma,min})$, as desired.
\end{proof}

\subsection{Extension of the Dirichlet trace}\label{sec:tauDtilde}

\begin{proposition} \label{prop:tauDext}
    For $\gamma\in (-1,1)$, the restriction to $W_\gamma^2$ of the Dirichlet trace $\tau_{\gamma}^D$ defined in Theorem \eqref{thm:qbt1} extends to a bounded, surjective map
    \begin{align*}
	\tilde\tau_\gamma^D \colon \dom(\L_{\gamma,max}) \to H^{-1+|\gamma|} (\Sm^1).
    \end{align*}
    Moreover, we have, for any $f\in \dom (\L_{\gamma,max})$ and $g\in \wtH^{2,\gamma}_D(\Dm)$, 
    \begin{align}
	\dprod{\L_\gamma f}{g}_{L^2_\gamma} - \dprod{f}{\L_\gamma g}_{L^2_\gamma} = - \langle \tilde\tau^D_\gamma f, \tau_\gamma^N g\rangle_{H^{-1+|\gamma|}, H^{1-|\gamma|}}.
	\label{eq:BTid}
    \end{align}    
\end{proposition}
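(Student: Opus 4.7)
The plan is to follow the template of \cite[Theorem 8.4.1]{Behrndt2020} and define $\tilde\tau_\gamma^D$ by duality against the surjective Neumann trace from Proposition \ref{prop:tauND}. Fixing a bounded right inverse $R_N\colon H^{1-|\gamma|}(\Sm^1)\to \wtH^{2,\gamma}_D(\Dm)$ of $\tau_\gamma^N$, for $f\in \dom(\L_{\gamma,max})$ and $h\in H^{1-|\gamma|}(\Sm^1)$, I set
\begin{align*}
\psi_f(h) := \dprod{f}{\L_\gamma R_N h}_{L^2_\gamma} - \dprod{\L_\gamma f}{R_N h}_{L^2_\gamma}.
\end{align*}
Since $\|\L_\gamma R_N h\|_{L^2_\gamma}+\|R_N h\|_{L^2_\gamma}\lesssim \|R_N h\|_{\wtH^{2,\gamma}_D}\lesssim \|h\|_{H^{1-|\gamma|}}$, Cauchy--Schwarz yields $|\psi_f(h)|\le C\|f\|_{\dom(\L_{\gamma,max})}\|h\|_{H^{1-|\gamma|}}$, so Riesz representation produces a bounded map $f\mapsto \tilde\tau_\gamma^D f$ into $H^{-1+|\gamma|}(\Sm^1)=(H^{1-|\gamma|}(\Sm^1))'$. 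Well-definedness (independence of the choice of $R_N$) follows from the characterization \eqref{eq:Lmin}: two preimages of $h$ under $\tau_\gamma^N$ differ by an element of $\wtH^{2,\gamma}_D\cap\ker\tau_\gamma^N=\dom(\L_{\gamma,min})$, on which the expression $\dprod{f}{\L_\gamma\cdot}-\dprod{\L_\gamma f}{\cdot}$ vanishes by $\L_{\gamma,max}=\L_{\gamma,min}^*$. Identity \eqref{eq:BTid} is then the restatement of the definition of $\psi_f$ with $R_N h$ replaced by an arbitrary $g\in \wtH^{2,\gamma}_D$ satisfying $\tau_\gamma^N g = h$.

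Next, to verify that $\tilde\tau_\gamma^D$ extends $\tau_\gamma^D|_{W_\gamma^2}$, I would apply Green's second identity \eqref{eq:G2ext} with $f\in W_\gamma^2$ and $g\in \wtH^{2,\gamma}_D\subset W_\gamma^2\cap \ker \tau_\gamma^D$ (the inclusion following from Lemma \ref{lem:H1spaces}): this yields $\dprod{\L_\gamma f}{g}-\dprod{f}{\L_\gamma g}=-\langle \tau_\gamma^N g,\tau_\gamma^D f\rangle_{H_{(\gamma)}',H_{(\gamma)}}$, and comparison with \eqref{eq:BTid} produces $\langle \tilde\tau_\gamma^D f-\tau_\gamma^D f,\tau_\gamma^N g\rangle = 0$ for every such $g$. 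The surjectivity of $\tau_\gamma^N\colon \wtH^{2,\gamma}_D\to H^{1-|\gamma|}$ from Proposition \ref{prop:tauND} then forces $\tilde\tau_\gamma^D f = \tau_\gamma^D f$ as elements of $H^{-1+|\gamma|}$.

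The main obstacle is surjectivity of $\tilde\tau_\gamma^D$. Density of the range in $H^{-1+|\gamma|}(\Sm^1)$ is immediate, since $\A_\gamma\subset \dom(\L_{\gamma,max})$ with $\tilde\tau_\gamma^D|_{\A_\gamma}=\tau_\gamma^D|_{\A_\gamma}$, and the latter hits all of $C^\infty(\Sm^1)$ by \eqref{eq:tracescomp}. To upgrade density to surjectivity, the plan is to construct a bounded right inverse by extending the Poisson map of Theorem \ref{thm:DNmap} from $H_{(\gamma)}$ to the larger space $H^{-1+|\gamma|}$. Fixing $\lambda\in \rho(\L_{\gamma,D})$ and decomposing $\varphi = \sum_n \varphi_n e^{in\omega}$, I would set $P(\lambda)\varphi = \sum_n \varphi_n \tilde u_n(x) e^{in\omega}$, where $\tilde u_n$ is the (essentially unique) decaying solution of the angular-mode equation $(\L_{\gamma,n}-\lambda)\tilde u_n = 0$ on $(0,1)$, normalized so that $\tilde u_n\sim x^{-\gamma}$ as $x\to 0$ (or $\tilde u_n\sim\log x$ in the case $\gamma=0$). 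After rescaling $y = n^2 x$, the ODE reduces at leading order to a modified-Bessel-type equation whose decaying solution behaves like $y^{-\gamma}$ (resp.\ $\log y$) at the origin; this yields the uniform-in-$n$ estimate $\|\tilde u_n\|_{L^2_\gamma}\lesssim \langle n\rangle^{-1+|\gamma|}$. By Parseval, $P(\lambda)\colon H^{-1+|\gamma|}(\Sm^1)\to L^2_\gamma \cap \ker(\L_\gamma-\lambda)\subset \dom(\L_{\gamma,max})$ is bounded, and $\tilde\tau_\gamma^D\circ P(\lambda) = \mathrm{id}$ on $C^\infty(\Sm^1)$ by construction, hence on $H^{-1+|\gamma|}$ by density and continuity. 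The delicate piece is the verification of the mode-by-mode asymptotic estimate and its uniformity as $n\to\infty$, which I expect to require either explicit Bessel-function identities or a WKB-type argument tailored to the degenerate endpoint.
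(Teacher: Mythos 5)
Your definition of $\tilde\tau_\gamma^D$ by duality against a right inverse $R_N$ of $\tau_\gamma^N\colon \wtH^{2,\gamma}_D(\Dm)\to H^{1-|\gamma|}(\Sm^1)$, the boundedness estimate via Riesz representation, the verification that it restricts to $\tau_\gamma^D$ on $W_\gamma^2$ by comparison with \eqref{eq:G2ext}, and the derivation of \eqref{eq:BTid} by splitting $g = R_N\tau_\gamma^N g + g_{00}$ with $g_{00}\in\dom(\L_{\gamma,min})$ and invoking $\L_{\gamma,max}=\L_{\gamma,min}^*$ all coincide with the paper's argument.

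The gap is in the surjectivity. You reduce it to a uniform-in-$n$ estimate $\|\tilde u_n\|_{L^2_\gamma}\lesssim\langle n\rangle^{-1+|\gamma|}$ for suitably normalized solutions of the mode equations, and you explicitly leave this unproven (``I expect to require either explicit Bessel-function identities or a WKB-type argument''). That estimate carries the entire content of the surjectivity claim in your approach, and it is not routine: one must also verify that each $\tilde u_n e^{in\omega}$ lies in $W_\gamma^2$ so that $\tilde\tau_\gamma^D$ genuinely extracts the normalized leading coefficient, and the case $\gamma=0$, with its double indicial root and log-weighted boundary space, would need separate treatment. As written, the surjectivity proof is therefore incomplete. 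The paper sidesteps all of this with a purely functional-analytic argument: since $\Upsilon:=\tau_\gamma^N\L_{\gamma,D}^{-1}\colon L^2_\gamma\to H^{1-|\gamma|}(\Sm^1)$ is bounded and onto (Proposition \ref{prop:tauND} combined with \eqref{eq:Linv}), its transpose $\Upsilon'\colon H^{-1+|\gamma|}(\Sm^1)\to L^2_\gamma$ is bounded; a one-line distributional computation against $\psi\in\dot{C}^\infty(\Dm)$ shows that $\Upsilon'$ takes values in $\fN_0(\L_{\gamma,max})\subset\dom(\L_{\gamma,max})$, and substituting $\Upsilon' f$ into the defining duality formula gives $\tilde\tau_\gamma^D\Upsilon' = id$ on $H^{-1+|\gamma|}(\Sm^1)$ directly. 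If you wish to keep your Poisson-map route you must actually carry out the uniform mode asymptotics; otherwise the transpose argument closes the gap with no ODE analysis at all.
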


\begin{proof}[Proof of Proposition \ref{prop:tauDext}] Let $R\colon H^{1-|\gamma|}(\Sm^1) \to \wtH^{2,\gamma}_D(\Dm)$ a right inverse for the map \eqref{eq:tau2ND}, in particular $\tau_\gamma^N (Rh) = h$ for all $h\in H^{1-|\gamma|}(\Sm^1)$. For $f\in \dom(\L_{\gamma,max})$, define the functional $\phi_f$ on $H^{1-|\gamma|}(\Sm^1)$ by 
    \begin{align}
	\phi_f (h) = - \dprod{\L_\gamma f}{Rh}_{L^2_\gamma} + \dprod{f}{\L_\gamma Rh}_{L^2_\gamma}, \qquad h\in H^{1-|\gamma|}(\Sm^1).
	\label{eq:Dext}
    \end{align}
    The map $\phi_f$ obviously satisfies 
    \begin{align*}
	|\phi_f(h)| &\le \|\L_\gamma f\|_{L^2_\gamma} \|Rh\|_{L^2_\gamma} + \|f\|_{L^2_\gamma} \|\L_\gamma Rh\|_{L^2_\gamma} \\
	&\le (\|f\|_{L^2_\gamma} + \|\L_\gamma f\|_{L^2_\gamma}) (\|Rh\|_{L^2_\gamma} + \|\L_\gamma Rh\|_{L^2_\gamma} ) \le C  (\|f\|_{L^2_\gamma} + \|\L_\gamma f\|_{L^2_\gamma}) \|h\|_{H^{1-|\gamma|}(\Sm^1)},
    \end{align*}
    by boundedness of $R$. By Riesz representation, this defines a unique element $\tilde\tau_\gamma^D f$ in $H^{-1+|\gamma|}(\Sm^1)$. 
    If $f\in W_\gamma^2$, then \eqref{eq:G2ext} implies that $\phi_f (h) = \langle \neu (Rh), \dir f\rangle_{H'_{(\gamma)}, H_{(\gamma)}}$ where, since $\tau_\gamma^D f \in H_{(\gamma)} \subset H^{-1+|\gamma|}$, and since $\tau_\gamma^N (Rh) = h \in H^{1-|\gamma|}$, the latter pairing also equals to $\langle \tau_\gamma^D f, h \rangle_{H^{-1+|\gamma|}, H^{1-|\gamma|}}$. In particular, if $f\in W^2_\gamma$, then $\tilde\tau_\gamma^D f = \tau_\gamma^D f$ as elements of $H^{-1+|\gamma|}$. 

    On to proving \eqref{eq:BTid}, pick $f\in \dom(\L_{\gamma,max})$ and $g\in \wtH^{2,\gamma}_D$. Write $g = R \tau_\gamma^N g + g_{00}$ where $\tau_\gamma^N g_{00} = 0$ hence $g_{00} \in \dom(\L_{\gamma,min})$. Then, writing $\dprod{\cdot}{\cdot}$ for the $L^2_\gamma$ inner-product for conciseness,
    \begin{align*}
	\dprod{\L_\gamma f}{g} - \dprod{f}{\L_\gamma g} &= \dprod{\L_\gamma f}{R \tau_\gamma^N g} - \dprod{f}{\L_\gamma R \tau_\gamma^N g} + \dprod{\L_\gamma f}{g_{00}} - \dprod{f}{\L_\gamma g_{00}} \\
	&\!\stackrel{\eqref{eq:Dext}}{=} - \langle \tilde\tau^D_\gamma f, \tau_\gamma^N g\rangle_{H^{-1+|\gamma|}, H^{1-|\gamma|}} + \dprod{\L_\gamma f}{g_{00}} - \dprod{f}{\L_\gamma g_{00}}
    \end{align*}
    and the last two terms cancel out since $\L_{\gamma,max} = (\L_{\gamma,min})^*$. 

    To show surjectivity of $\tilde\tau_\gamma^D$, let us now construct a right inverse. As a combination of Proposition \ref{prop:tauND} and \eqref{eq:Linv}, the operator $\Upsilon:= \neu \L_{\gamma,D}^{-1} \colon L^2_\gamma \to H^{1-|\gamma|}(\Sm^1)$ is continuous and onto (with $R$ defined in this proof, $\L_{\gamma}R$ is a right inverse for $\Upsilon$), hence the transpose $\Upsilon'\colon H^{-1+|\gamma|}(\Sm^1)\to L^2_\gamma$ is bounded. 
    
    We first claim that $\Upsilon'$ is valued in $\fN_0(\L_{\gamma,max})$, whose topology is captured by $L_\gamma^2$. This makes $\Upsilon'$ a continuous $\dom(\L_{\gamma,max})$-valued map. To see this, for $f\in H^{-1+|\gamma|}(\Sm^1)$ so that $\Upsilon' f\in L^2_\gamma$, let us check that $\L_\gamma \Upsilon' f =0$ (in the distributional sense that ${}^t \L_\gamma \iota_\gamma (\Upsilon' f) = 0$): for $\psi\in \dot{C}^\infty$, 
    \begin{align*}
	\langle {}^t \L_\gamma \iota_\gamma (\Upsilon' f), \psi \rangle = \dprod{\Upsilon' f}{\L_\gamma \psi}_{L^2_\gamma} = \langle f, \neu \L_{\gamma,D}^{-1} \L_\gamma \psi\rangle_{H^{-1+|\gamma|}, H^{1-|\gamma|}} =0,
    \end{align*} 
    since $\psi\in \dom (\L_{\gamma,D})$ and $\neu\psi = 0$. Hence $\Upsilon'$ is a continuous $\dom(\L_{\gamma,max})$-valued map.

    We finally show that $\tilde\tau_\gamma^D \Upsilon' = id|_{H^{-1+|\gamma|}(\Sm^1)}$. For $f\in H^{-1+|\gamma|}(\Sm^1)$ and $h\in H^{1-|\gamma|}(\Sm^1)$, we have
    \begin{align*}
	\langle \tilde\tau_\gamma^D \Upsilon' f, h \rangle_{H^{-1+|\gamma|}, H^{1-|\gamma|}} &\stackrel{\eqref{eq:Dext}}{=} - \dprod{\cancel{\L_\gamma \Upsilon' f}}{Rh}_{L^2_\gamma} + \dprod{\Upsilon' f}{\L_\gamma Rh}_{L^2_\gamma} \\
	&= \langle f, \neu \L_{\gamma,D}^{-1} \L_\gamma Rh \rangle_{H^{-1+|\gamma|}, H^{1-|\gamma|}},
    \end{align*} 
    and since $Rh\in \dom(\L_{\gamma,D})$, $\neu \L_{\gamma,D}^{-1} \L_\gamma Rh = \neu Rh = h$, and hence $\tilde\tau_\gamma^D \Upsilon' f = f$ as elements of $H^{-1+|\gamma|}(\Sm^1)$.

    The proof of Proposition \ref{prop:tauDext} is complete. 
\end{proof}

\subsection{Extension \eqref{eq:G2extmax} of Green's second identity} \label{sec:G2ext}

We first prove Lemma \ref{lem:directsum}.

\begin{proof}[Proof of Lemma \ref{lem:directsum}] With $\L_{\gamma,D}^{-1}$ defined in \eqref{eq:Linv}, given $f\in \dom(\L_{\gamma,max})$, since $(\L_\gamma-\lambda) f\in L^2_\gamma$, we may define $f_D := (\L_{\gamma,D}-\lambda)^{-1} (\L_\gamma-\lambda) f \in \wtH^{2,\gamma}_D(\Dm)\subset \dom(\L_{\gamma,max})$ such that $(\L_\gamma-\lambda) f_D = (\L_{\gamma}-\lambda) f$ and $\tau_\gamma^D f_D =0$, and set $f_\lambda := f-f_D$. Clearly $f_\lambda\in \dom(\L_{\gamma,max})$ and $(\L_\gamma-\lambda) f_\lambda = 0$, i.e. $f_\lambda \in \fN_\lambda (\L_{\gamma,max})$.

    To show uniqueness, if $f\in \dom (\L_{\gamma,D}) \cap \fN_\lambda (\L_{\gamma,max})$, then $(\L_\gamma-\lambda) f = 0$ with $\tau_D f = 0$ which, by injectivity of $\L_{\gamma,D}-\lambda$, implies $f=0$. Lemma \ref{lem:directsum} is proved.    
\end{proof}

We finally prove how to extend Green's second identity to \eqref{eq:G2extmax}. In this paragraph, inner products with no subscripts are implicitly $L^2_\gamma$-inner products. Using Lemma \ref{lem:directsum} with $\lambda=0$, we compute 
\begin{align*}
    \dprod{\L_\gamma f}{g} &- \dprod{f}{\L_\gamma g} \\
    &\stackrel{\eqref{eq:directsum}}{=} \dprod{\L_\gamma f}{g_D} + \dprod{\L_\gamma f}{g_0} - \dprod{f_D}{\L_\gamma g} - \dprod{f_0}{\L_\gamma g} \\
    &\stackrel{\eqref{eq:BTid}}{=} \dprod{f}{\L_\gamma g_D} - \langle \tilde\tau_\gamma^D f, \tau_\gamma^N g\rangle_{-1+|\gamma|,1-|\gamma|} + \dprod{\L_\gamma f}{g_0} \dots\\
    & \qquad - \dprod{\L_\gamma f_D}{g} + \langle \tilde\tau_\gamma^D g,\tau_\gamma^N f_D \rangle_{-1+|\gamma|, 1-|\gamma|} - \dprod{f_0}{\L_\gamma g} \\
    &\stackrel{\eqref{eq:directsum}}{=} \dprod{f_D}{\L_\gamma g_D} + \cancel{\dprod{f_0}{\L_\gamma g_D}} - \langle \tilde\tau_\gamma^D f, \tau_\gamma^N g\rangle_{-1+|\gamma|,1-|\gamma|} + \cancel{\dprod{\L_\gamma f_D}{g_0}} + \dprod{\cancel{\L_\gamma f_0}}{g_0} \dots\\
    & \qquad - \dprod{\L_\gamma f_D}{g_D} - \cancel{\dprod{\L_\gamma f_D}{g_0}} + \langle \tilde\tau_\gamma^D g,\tau_\gamma^N f_D \rangle_{-1+|\gamma|, 1-|\gamma|} - \cancel{\dprod{f_0}{\L_\gamma g_D}} - \dprod{f_0}{\cancel{\L_\gamma g_0}}.
\end{align*}
Finally, $\dprod{f_D}{\L_\gamma g_D} = \dprod{\L_\gamma f_D}{g_D}$ because $(\L_{\gamma}, \wtH^{2,\gamma}_D(\Dm))$ is self-adjoint, and \eqref{eq:G2extmax} follows.

The proof of Theorem \ref{thm:second} is complete.

\appendix

\section{Proofs of auxiliary lemmas} \label{eq:auxlemmas}

\subsection{Characterization of the Friedrichs extension - proof of Lemma \ref{lem:friedrichs-lemma}}\label{app:Friedrichs}

\begin{proof}[Proof of Lemma \ref{lem:friedrichs-lemma}]
    After adjusting $T_0$ by a constant multiple of $I$ (noting that this does not affect any domains involved), we may assume that $T_0\ge 1$.
    The $\alpha$-closure of $D(T_0)$ is precisely the space $V$ described above, i.e.\ the Hilbert space closure of $D(T_0)$ with respect to the norm $u\mapsto(\alpha(u,u))^{1/2}$. Since the domain of the Friedrichs extension is $D_F$ as defined in \eqref{eq:DF}, with $D_F\subset V$, it follows that if $T$ is the Friedrichs extension of $T_0$, then $D(T) = D_F$ must be contained in $V$, i.e.\ the $\alpha$-closure of $D(T_0)$.
    
    Conversely, suppose that $D(T)$ is contained in the $\alpha$-closure of $\text{dom }\alpha$, i.e.\ in $V$. We now claim that, with $\tilde\alpha$ the extension of $\alpha$ to $V$, 
    \begin{align*}
	\tilde\alpha(u,v) = ( Tu,v)_{\H}, \qquad u\in D(T), \qquad v\in V.
    \end{align*}
    We first do so when $v\in D(T_0)$. Since $u\in D(T)\subset V$, it follows that there is a sequence $u_n\in D(T_0)$ such that $u_n\to_{\alpha}u$. In that case, we also have $\tilde\alpha(u,v) = \lim_{n\to\infty}\tilde\alpha(u_n,v) = \lim_{n\to\infty}{\alpha(u_n,v)}$. But we also have
    \begin{align*}
	\lim_{n\to\infty}{\alpha(u_n,v)} = \lim_{n\to\infty}{( T_0u_n,v)_{\H}} = \lim_{n\to\infty}{( u_n,T_0v)_{\H}} = ( u,T_0v)_{\H} = ( u,Tv)_{\H} = ( Tu,v)_{\H}.
    \end{align*}
    The second equality follows from the symmetry of $T_0$. The third equality follows since $u_n\to_{\alpha}u$ necessarily implies $u_n\to u$ in $\H$. The fourth equality follows from $T$ being an extension of $T_0$. Finally, the last equality follows from the symmetry of $T$ on $D(T)$, noting that both $u$ and $v$ belong to $D(T)$. Thus, $\tilde\alpha(u,v) = ( Tu,v)_{\H}$ for $u\in D(T)$ and $v\in\text{dom }\alpha$. To extend the result to $v\in V$, we note that $v\in V$ implies the existence of $v_n\in D(T_0)$ such that $v_n\to_{\alpha}v$. Then
    \[\tilde\alpha(u,v) = \lim_{n\to\infty}{\tilde\alpha(u,v_n)} = \lim_{n\to\infty}{( Tu,v_n)_{\H}} = ( Tu,v)_{\H}.\]
    The second equality follows from the case previously considered, while the third follows since $v_n\to_{\alpha}v$ necessarily implies $v_n\to v$ in $\H$. Hence, we have $\tilde\alpha(u,v) = ( Tu,v)_{\H}$ for $u\in D(T)$ and $v\in V$. It follows, for $u\in D(T)$, that
    \[|\tilde\alpha(u,v)|\le\|Tu\|_{\H}\|v\|_{\H},\]
    i.e., $u\in D_F$. Thus, $D(T)$ is contained in the domain of the Friedrichs extension. Since $T$ is also self-adjoint, it follows that $T$ must be the Friedrichs extension itself, since the domain of a self-adjoint extensions cannot be properly contained in the domain of another self-adjoint extension (this follows by taking adjoints).
\end{proof}

\subsection{Proof of \response{Lemma \ref{real-analysis-lemma}}}\label{sec:RAlemmas}

\response{
\begin{proof}[Proof of Lemma \ref{real-analysis-lemma}]
Let $s_j=\sum_{j=1}^k 1/a_k$; then $s_j\to\infty$ as $j\to\infty$ by assumption. Let
\[c_k^{(j)} = \begin{cases} \frac{1}{s_ja_k} & 1\le k\le j, \\ 0 & k>j\end{cases}.\]
Then
\[\sum_{k=1}^\infty {c_k^{(j)}} = \frac{1}{s_j}\sum_{k=1}^j{\frac{1}{a_k}} = 1\]
by construction, and
\[\sum_k{a_k|c_k^{(j)}|^2} = \sum_{k=1}^j\frac{1}{s_j^2a_k} = \frac{1}{s_j}\to 0\]
since $s_j\to\infty$ by assumption.
\end{proof}
}

\subsection{Proof of Lemma \ref{lem:Cmbounds}}\label{sec:lemCmbounds}
Recall the expression
\begin{align}
    C_{m; s,\gamma} &= \frac{\langle m\rangle^{2s-2\gamma-2}}{\pi (\gamma!)^2} \sum_{\ell\ge 0} (m+2\ell + 1+\gamma)^{1-2s} \frac{(m+\ell+\gamma)!}{(m+\ell)!} \frac{(\ell+\gamma)!}{\ell!}
    \label{eq:Cmsgamma}
\end{align}

We note, for $\gamma\ge 0$ and $s>\gamma+1$, that 
\[1-2s\le 1-2s+2\gamma = -1-2(s-\gamma-1) <-1.\]
Noting the asymptotic $\lim_{x\to\infty}\frac{(x+\gamma)!}{x!(x+1)^{\gamma}} = 1$, it follows that the summand in \eqref{eq:Cmsgamma} is bounded from above and from below by a multiple of $(m+2\ell+1+\gamma)^{1-2s}(m+\ell+1)^{\gamma}(\ell+1)^{\gamma}$. Thus, it suffices to establish upper and lower bounds on the sum
\[\langle m\rangle^{2s-2\gamma-2}\sum_{\ell\ge 0}{(m+2\ell+1+\gamma)^{1-2s}(m+\ell+1)^{\gamma}(\ell+1)^{\gamma}}.\]
For the upper bound, note that
\[m+2\ell+1+\gamma \ge m+\ell+1\implies (m+2\ell+1+\gamma)^{1-2s}\le (m+\ell+1)^{1-2s}\]
since $1-2s<0$, and
\[\ell+1\le m+\ell+1\implies (\ell+1)^{\gamma}\le (m+\ell+1)^{\gamma}\]
since $\gamma>0$. Hence, we can estimate
\begin{align*}
    C_{m;s,\gamma} &\le C\langle m\rangle^{2s-2\gamma-2}\sum_{\ell\ge 0}{(m+2\ell+1+\gamma)^{1-2s}(m+\ell+1)^{\gamma}(\ell+1)^{\gamma}} \\
    &\le C\langle m\rangle^{2s-2\gamma-2}\sum_{\ell\ge 0}{(m+\ell+1)^{1-2s+2\gamma}},
\end{align*}
where $C$ does not depend on $m$. Note that the sum does converge since $1-2s+2\gamma<-1$. Since $x\mapsto(m+x)^{1-2s+2\gamma}$ is decreasing, we have 
\[\sum_{\ell\ge 0}{(m+\ell+1)^{1-2s+2\gamma}}\le \int_0^{\infty}{(m+x)^{1-2s+2\gamma}\,dx} = \frac{m^{2-2s+2\gamma}}{2s-2\gamma-2}.\]
It follows that
\[C_{m;s,\gamma} \le C\langle m\rangle^{2s-2\gamma-2}\frac{m^{2-2s+2\gamma}}{2s-2\gamma-2}\le \frac{C}{2s-2\gamma-2},\]
thus establishing the upper bound. For the lower bound, we assume without loss of generality that $m$ is sufficiently large, say $m>\gamma$, and discard the (positive) terms for $0\le \ell<m$. For the remaining terms, we have
\begin{align*}
 2\ell+2>m+\ell+1 &\implies\ell+1>\frac{1}{2}(m+\ell+1)\implies(\ell+1)^{\gamma}>2^{\gamma}(m+\ell+1)^{\gamma}, \\
m+2\ell+1+\gamma < 2(m+\ell+1) &\implies (m+2\ell+1+\gamma)^{1-2s} > 2^{1-2s}(m+\ell+1)^{1-2s}.
\end{align*}
We conclude that
\begin{align*}
    C_{m;s,\gamma}&\ge c\langle m\rangle^{2s-2\gamma-2}\sum_{\ell\ge m}{(m+2\ell+1+\gamma)^{1-2s}(m+\ell+1)^{\gamma}(\ell+1)^{\gamma}} \\
    &\ge 2^{1-2s+\gamma}c\langle m\rangle^{2s-2\gamma-2}\sum_{\ell\ge m}{(m+\ell+1)^{1-2s+2\gamma}},
\end{align*}
where $c$ does not depend on $m$. Again using that $x\mapsto(m+x)^{1-2s+2\gamma}$ is decreasing, we have
\[\sum_{\ell = m}^\infty {(m+\ell+1)^{1-2s+2\gamma}}\ge\int_{m+1}^{\infty}{(m+x)^{1-2s+2\gamma}\,dx} = \frac{(2m+1)^{2-2s+2\gamma}}{2s-2\gamma-2}.\]
It follows that
\[C_{m;s,\gamma}\ge 2^{1-2s+\gamma}c\langle m\rangle^{2s-2\gamma-2}\frac{(2m+1)^{2-2s+2\gamma}}{2s-2\gamma-2} \ge c'\]
for all $m>\gamma$, where
\[c' = \frac{2^{1-2s+\gamma}c}{2s-2\gamma-2}\left(\inf_{m\ge 0}{\frac{\langle m\rangle}{2m+1}}\right)^{2s-2\gamma-2} = \frac{2^{1-2s+\gamma}5^{1-s+\gamma}c}{2s-2\gamma-2}>0.\]
This establishes the desired lower bound.

\subsection*{Acknowledgements.}

The authors acknowledge partial support from NSF-CAREER grant DMS-1943580. The authors would like to thank Jussi Behrndt who took an interest in the project and provided many helpful comments and references, as well as Rafe Mazzeo, Andr\'as Vasy and Charlie Epstein for providing feedback on an early version of the manuscript. \response{The authors would also like to thank the anonymous referee, whose comments helped improve the clarity of the article.}

\bibliographystyle{siam}


\end{document}